\documentclass[11pt,reqno]{amsart}
\usepackage{cmap}
\usepackage[T1]{fontenc}
\usepackage[english]{babel}
\usepackage{amsfonts,amssymb,amsthm,amsmath}
\usepackage{url}
\usepackage{enumitem}

\usepackage{secdot}

\usepackage{graphicx}
\usepackage{epstopdf}
\usepackage{a4wide}
\usepackage{xcolor}
\usepackage[colorlinks=true,linktocpage,pdfpagelabels,
bookmarksnumbered,bookmarksopen]{hyperref}
\definecolor{ForestGreen}{rgb}{0.1,0.6,0.05}
\definecolor{EgyptBlue}{rgb}{0.063,0.1,0.6}
\hypersetup{
	colorlinks=true,
	linkcolor=EgyptBlue,         
	citecolor=ForestGreen,
	urlcolor=olive
}

\usepackage[hyperpageref]{backref}

\newtheorem{theorem}{Theorem}
\newtheorem{proposition}[theorem]{Proposition}
\newtheorem{lemma}[theorem]{Lemma}

\theoremstyle{definition}

\newtheorem{remark}[theorem]{Remark}

\let\OLDthebibliography\thebibliography
\renewcommand\thebibliography[1]{
	\OLDthebibliography{#1}
	\setlength{\parskip}{1pt}
	\setlength{\itemsep}{1pt plus 0.3ex}
}

\numberwithin{equation}{section}
\numberwithin{theorem}{section}
\numberwithin{equation}{section}
\numberwithin{theorem}{section}

\newcommand{\sgn}{\operatorname{sgn}}
\newcommand{\rn}{\mathbb{R}^N}
\newcommand{\R}{\mathbb{R}}

\begin{document}

\title{Global boundedness and normalized solutions to a $p$-Laplacian equation}


\date\today
\maketitle
\centerline{\scshape Raj Narayan Dhara$^{2}$, Matteo Rizzi$^1$}


{\footnotesize
 \centerline{$^1$ Dipartimento di Matematica, Università degli Studi di Bari Aldo Moro, Bari, Italy}
\centerline{$^2$Faculty of Mathematics, Wrocław University of Science and Technology, Wroc\l{}aw, Poland}
} 
\thispagestyle{empty}


\begin{abstract}
In the paper, we prove the existence of radial solutions to
\begin{equation}\notag
-\Delta_p u+({\rm sgn}(p-s)+V(x))|u|^{p-2}u+\lambda |u|^{s-2}u=|u|^{q-2}u\qquad\text{in}\,\R^N   \\
\end{equation}
with prescribed $L^s(\R^N)$-norm, where $N\ge 3,\,p\in[2,N),\,s\in(1,p],\,q\in(p\frac{N+s}{N},\frac{Np}{N-p})$ and $V:\R^N\to\R$ is a suitable radial potential. We stress that $V$ is required to be radial but not necessarily bounded, and there are no assumptions about its sign. The case $V\equiv 0$ is also included. The proof is variational and relies on a min-max argument. A key-tool is the Pohozaev identity, which is shown to be true for any solution under quite weak assumptions about the potential $V$. This identity is proved with the aid of a new global boundedness result for subsolutions to a suitable $p$-Laplace equation.
\end{abstract}

\section{Introduction}
Let $N\ge 3$, $p\in[2,N)$, $q\in(p\frac{N+s}{N},p^*),\,p^*:=\frac{Np}{N-p}$, $s\in(1,p]$ and $\rho>0$. Under these assumptions, we want to construct radial solutions to the problem
\begin{equation}\label{main-eq}
\begin{aligned}
-\Delta_p u+{\rm sgn}(p-s)|u|^{p-2}u+V(x)|u|^{p-2}u+\lambda |u|^{s-2}u&=|u|^{q-2}u\qquad\text{in $\R^N$,}  \\
\|u\|_s=\rho ,
\end{aligned}   
\end{equation}
where $V:\R^N\to\R$ is a given radial potential and $\|u\|_s:=\left(\int_{\R^N}|u|^sdx\right)^{1/s}$.\\

Consider a generalized nonlinear Time-Dependent Schrödinger Equation (TDSE) where the kinetic energy term is governed by the $p$-Laplacian operator
$$i \frac{\partial \Psi}{\partial t} = -\Delta_p \Psi - f(|\Psi|)\Psi,$$
where $\Psi(t, x)$ is a complex-valued wave function depending on time $t$ and space $x$. In quantum mechanics and optics, we are deeply interested in solitary waves or standing waves-states whose spatial profile remains constant over time, while only their phase rotates. To find these, we make the standing wave ansatz
$$\Psi(t, x) = e^{-i\lambda t} u(x),$$
where $u(x)$ is a real-valued spatial profile and $\lambda \in \mathbb{R}$ is the frequency. This ansatz into the generalized TDSE gives us a stationary elliptic equation as in~\eqref{main-eq} for the case $s=2,\, V\equiv 0$.
While the case $s=2$ corresponds to the physical conservation of the mass of particles, the case $s=p$ corresponds to the mathematical conservation of the system's inherent nonlinear scale. 
For instance, consider  an eigenvalue equation of the form
$$-\Delta_p u = \lambda |u|^{p-2}u.$$
To find solutions, we minimize the kinetic energy $\int |\nabla u|^p \, dx$ subject to the constraint $\int |u|^p \, dx = c$. The first eigenvalue represents the generalized fundamental frequency of the system.
Exploring the space between them, that is $s \in (1, p]$, allows us to model strongly interacting, anomalous fluids while guaranteeing the system remains energetically stable and won't mathematically collapse.
In complex systems like non-Newtonian fluids flowing through porous rocks (modeled by the Porous Medium Equation (state-dependent (density)) or $p$-Laplacian (gradient-dependent (flux)), the standard laws of thermodynamics are modified. In these systems, standard Boltzmann-Gibbs statistical mechanics breaks down. Instead, the system is governed by ``non-extensive statistics'' (Tsallis statistics~\cite{Tsallis1988, tsallis2009}), where the probability density of finding a particle is not simply $|u|^2$, but is related to $u$ raised to a power $s$ to account for long-range interactions and correlations between particles. Conserving $\int |u|^s \, dx = \rho$ physically corresponds to conserving a weighted measure of the fluid's volume that accounts for its tendency to clump or spread in an anomalous way. 
In particular, when $1<s<2$, physically, this models a ``droplet'' or a localized blob of fluid with a hard edge, rather than a wave that slowly fades to zero at infinity, whereas for $s>2$ models systems that resist high densities, forcing the wave to spread out more smoothly.
In applications like nonlinear optics and Bose-Einstein condensates, the $L^s$-norm represents conserved physical quantities, such as the total number of atoms or the mass of the wave. The $L^p$-supercritical ($q>p\frac{N+s}{N}$) regime is often associated with more complex and potentially singular physical phenomena, such as self-focusing, solitons (solitary waves) and nonlinear scattering. It can also describe optical collapse, where the wave shape becomes highly concentrated. Now we shall give the mathematical set up and links to the recent mathematical studies considering the TDSE involving $p$-Laplacian operator.

In \cite{WLZL}, the authors consider problem \eqref{main-eq} with $V=0,\, s=2$ and find at least a solution $u$ in the space $X:= W^{1,p}(\R^N)\cap L^2(\R^N)$. However, we do not know whether their solution is radial or not. Due to the fact that the non-linearity $f(u):=|u|^{q-2}u-|u|^{p-2}u-\lambda |u|^{s-2}u$ depends on $u$ only and not on $x$, we expect to be able to construct radial solutions to \eqref{main-eq} at least when $V=0$, or more generally for some suitable radial potentials $V$.\\

In this paper, we will prove that, in some cases, this is true. More precisely, we assume that $V\in L^\alpha(\R^N)$ with $\alpha\in[\frac{N}{p},\infty]$. Using that $\frac{p\alpha}{\alpha-1}\in(p,p^*)$ for $\alpha\in(\frac{N}{p},\infty)$, $\frac{p\alpha}{\alpha-1}=p^*$ for $\alpha=\frac{N}{p}$ and introducing the convention $\frac{p\alpha}{\alpha-1}=p$ for $\alpha=\infty$, the H\"{o}lder inequality and the Sobolev embedding $W^{1,p}(\R^N)\subset L^{\frac{\alpha p}{\alpha-1}}(\R^N)$ give
\begin{equation}\notag
\left|\int_{\R^N} V(x)|u|^{p-2} uv dx\right|\le \|V\|_\alpha\|u\|_{\frac{p\alpha}{\alpha-1}}^{p-1}\|v\|_{\frac{p\alpha}{\alpha-1}}<\infty\qquad\forall\,u,\,v\in W^{1,p}(\R^N), 
\end{equation}
hence problem \eqref{main-eq} is variational. In other words, setting $X:=W^{1,p}(\R^N)\cap L^s(\R^N)$, the solutions to \eqref{main-eq} are the critical points of the functional
\begin{equation}
J_V(u):=\frac{1}{p}\|\nabla u\|_p^p+\frac{{\rm sgn}(p-s)}{p}\|u\|^p_p+\frac{1}{p}\int_{\R^N} V(x)|u|^p dx-\frac{1}{q}\|u\|_q^q,\qquad\forall\, u\in X,
\end{equation}
constrained to the sphere $\mathcal{S}_{r,\rho}:=\mathcal{S}_\rho\cap X_r$, where $$X_r:=W^{1,p}_r(\R^N)\cap L^s(\R^N),\qquad\mathcal{S}_{\rho}:=\{u\in X:\,\|u\|_s=\rho\}$$
and $W^{1,p}_r(\R^N)$ is the space of radial functions in $W^{1,p}(\R^N)$. Setting 
$$\|u\|:=(\|\nabla u\|_p^p+{\rm sgn}(p-s)\|u\|_p^p)^{1/p}=\begin{cases}
    \|u\|_{W^{1,p}(\rn)}\qquad\text{if}\,s\in(1,p)\\
    \|\nabla u\|_p\qquad\text{if}\,p=s,
\end{cases}
$$
where $\|u\|_{W^{1,p}(\rn)}^{p}:=\|\nabla u\|_p^p+\|u\|_p^p$.
Then our functional can be rewritten as
$$J_V(u)=\frac{1}{p}\|u\|^p+\frac{1}{p}\int_{\R^N} V(x)|u|^p dx-\frac{1}{q}\|u\|_q^q\qquad\forall\, u\in X.$$

For $u\in X$ and $t>0$, we define the scaling $u^t(x):=t^{N/s}u(tx)$. Note that this scaling preserves the $L^s(\R^N)$-norm, namely $\|u^t\|_s=\|u\|_s$. Using such a scaling, we introduce the Pohozaev functional
\begin{equation}\label{pohozaev-functional}
\begin{aligned}
P_V(u)&:=\frac{d}{dt}\bigg|_{t=1}J_V(u_t)=\frac{p(N+s)-sN}{sp}\|\nabla u\|^p_p+\frac{N(p-s)}{sp}\|u\|^p_p\\
&-\frac{N(q-s)}{sq}\|u\|^q_q+\int_{\R^N} V(x)\left(\frac{N}{s}|u|^p+|u|^{p-2}u \nabla u\cdotp x\right)dx,\qquad\forall\,u\in X.
\end{aligned}    
\end{equation}
Note that, if we assume that $\tilde{W}:=V(\cdotp)|\cdotp|\in L^{\frac{\alpha p}{p-1}}(\R^N)$, with the convention that $\frac{\alpha p}{p-1}=\infty$ if $\alpha=\infty$, the functional $P_V$ is well defined on $X$. In fact, for such a potential $V$, the H\"{o}lder inequality and the Sobolev embedding $W^{1,p}(\R^N)\subset L^{\frac{\alpha p}{\alpha-1}}(\R^N)$ yield that
\begin{equation}\notag
\left|\int_{\R^N} V(x)|u|^{p-2} u(\nabla u\cdotp x)dx\right|\le \|\tilde{W}\|_{\frac{\alpha p}{p-1}}\|u\|_{\frac{p\alpha}{\alpha-1}}^{p-1}\|\nabla u\|_p<\infty\qquad\forall\,u\in W^{1,p}(\R^N)
\end{equation}
It is known that the solutions constructed in Theorem $1.1$ of \cite{WLZL} in the case $s=2,\,V\equiv 0$ satisfy the Pohozaev identity
\begin{equation}\label{pohozaev-identity}
P_V(u)=0.  
\end{equation}
In that case, the Pohozaev identity is a consequence of Theorem $1.1$ of \cite{guedda1989quasilinear}, which relies on some regularity properties of the solution $u$, such as boundedness. In our result the Pohozaev identity directly follows by construction, without using further regularity properties of the weak solution $u\in X$. In particular, it holds true for solutions which are not necessarily bounded.\\

In our forthcoming result, we will need to impose some bounds about the norms of $V$ and $\tilde{W}$. For this purpose, for $p\in[2,N)$, we set $$S_{p,\alpha}:=\begin{cases}\inf_{u\in W^{1,p}(\R^N)\setminus\{0\}}\frac{\|u\|^p}{\|u\|_\frac{p\alpha}{\alpha-1}^p}\qquad\text{if}\,\alpha\in[\frac{N}{p},\infty),\,s\in(1,p)\,\text{or}\,\alpha=N/p,\,s=p\\
\inf_{u\in W^{1,p}(\R^N)\setminus\{0\}}\frac{\|u\|_{W^{1,p}(\rn)}^p}{\|u\|_\frac{p\alpha}{\alpha-1}^p}\qquad\text{if}\,\alpha\in(N/p,\infty),\,s=p\\
1\qquad\text{if}\,\alpha=\infty,
\end{cases}$$ 
Note that, for $\alpha\in[\frac{N}{p},\infty)$, $S_{p,\alpha}^{-1/p}$ is the best constant in the Sobolev embedding $W^{1,p}(\R^N)\subset L^\frac{p\alpha}{\alpha-1}(\R^N)$.\\

In our main result we assume that \\

$(V_1)$ $\alpha\in [\frac{N}{p},\infty],\,V\in L^\alpha(\R^N)$ is a radial function such that $\tilde{W}\in L^{\frac{\alpha p}{p-1}}(\R^N),\,\|V\|_\alpha<S_{p,\alpha}$ and
\begin{equation}
\label{cond-norm-V}
sp S_{p,\alpha}^{-\frac{\alpha p}{p-1}}\|\tilde{W}\|_{\frac{\alpha p}{p-1}}+N\max\{|q-p-s|,1\}S_{p,\alpha}^{-1}\|V\|_\alpha <\min\left\{s\left(\frac{N}{q}-\frac{N-p}{p}\right),Nq-p(N+s)\right\},
\end{equation}
holds.\\

Now we are ready to state our main Theorem.
\begin{theorem}\label{main-th-hom}
Assume that $(V_1)$ holds and $\lim_{|x|\to\infty}V(x)=0$ if $\alpha\in\{N/p,\infty\}$. Then there exists $\rho_0>0$ such that, for any $\rho\in(0,\rho_0)$, there exists a constant $K_\rho>0$ and a solution $(\lambda_\rho,u_\rho)\in (0,\infty)\times X_r$ to problem \eqref{main-eq} such that $J_V(u_\rho)=c_{V,\rho}^r>0$, where
\begin{equation}
\begin{aligned}
c_{V,\rho}^r&:=\inf_{\gamma\in\Gamma_{\rho,r}}\max_{t\in[0,1]}J_V(\gamma(t))\\
\Gamma_{\rho,r}&:=\{\gamma\in C([0,1],\mathcal{S}_{\rho,r}):\,\|\gamma(0)\|\le K_\rho,\,J_V(\gamma(1))<0\}.
\end{aligned}     
\end{equation}
Moreover, $u_\rho$ satisfies the Pohozaev identity $P_V(u)=0$ and $\rho^s\lambda_\rho\to\infty$ as $\rho\to 0^+$.
\end{theorem}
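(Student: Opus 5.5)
The plan is to use the Jeanjean augmented-functional trick on the mass constraint, restricted to radial functions. Define $\tilde J(u,t):=J_V(u^t)$ on $\mathcal S_{\rho,r}\times\mathbb{R}_{>0}$; equivalently, write $t=e^\tau$ and work on $\mathcal S_{\rho,r}\times\mathbb{R}$.

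\textbf{Step 1: mountain pass geometry.} Combine the Gagliardo--Nirenberg inequality $\|u\|_q^q\le C\|\nabla u\|_p^{q\theta}\|u\|_s^{q(1-\theta)}$ with $\|V\|_\alpha<S_{p,\alpha}$. The latter gives
\[
J_V(u)\ge \tfrac1p\Bigl(1-\tfrac{\|V\|_\alpha}{S_{p,\alpha}}\Bigr)\|u\|^p-\tfrac Cq\rho^{q(1-\theta)}\|u\|^{q\theta}.
\]
Since $q>p\frac{N+s}{N}$, we have $q\theta>p$. Hence for $\rho<\rho_0$ there is a radius $K_\rho$ with two properties: $J_V$ is positive on $\{\|u\|\le K_\rho\}\cap\mathcal S_\rho$, and $\inf_{\|u\|=2K_\rho}J_V$ is strictly larger than $\sup_{\|u\|\le K_\rho}J_V$. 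Along the fibre $u^t$, the $\|u\|_q^q$ term scales like $t^{N(q-s)/s}$, and this power dominates the gradient power $t^{p(N+s)/s-N}$ and the potential terms. Therefore $J_V(u^t)\to-\infty$ as $t\to\infty$, so $\Gamma_{\rho,r}\neq\emptyset$ and $c^r_{V,\rho}>0$.

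\textbf{Step 2: a Palais--Smale sequence that is almost Pohozaev.} Apply the min-max principle (Ghoussoub's, or the Ekeland-based version) to $\tilde J$ on $\mathcal S_{\rho,r}\times\mathbb{R}$ with paths $(\gamma,0)$. Note that $\gamma(t)\mapsto\gamma(t)^{e^{\tau(t)}}$ maps admissible paths to admissible paths, so the two min-max levels agree. This yields $u_n\in\mathcal S_{\rho,r}$ with $J_V(u_n)\to c^r_{V,\rho}$, $\|(J_V|_{\mathcal S_{\rho,r}})'(u_n)\|\to0$ and $P_V(u_n)\to0$. The condition $\tilde W\in L^{\alpha p/(p-1)}$ makes $P_V$ well defined and continuous, and differentiability of $\tau\mapsto J_V(u^{e^\tau})$ is exactly what is needed here. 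By the principle of symmetric criticality (everything is radial), the Lagrange multipliers satisfy $\lambda_n=-\frac{1}{\rho^s}J_V'(u_n)u_n+o(1)$, computed up to $o(1)$.

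\textbf{Step 3: boundedness of the sequence.} Form $J_V(u_n)-\frac{s}{N(q-s)}P_V(u_n)$. This eliminates $\|u_n\|_q^q$ and leaves a positive multiple of $\|\nabla u_n\|_p^p$, with coefficient proportional to $Nq-p(N+s)$, plus a term in $\|u_n\|_p^p$ and the $V,\tilde W$ terms. The potential terms are controlled through Hölder and Sobolev by $S_{p,\alpha}^{-1}\|V\|_\alpha\|u_n\|^p$ and $S_{p,\alpha}^{-\alpha p/(p-1)}\|\tilde W\|\,\|u_n\|^p$. Condition \eqref{cond-norm-V} is designed precisely so that these terms are absorbed, which gives $\|u_n\|\le C$. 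The same condition, used with $P_V(u_n)\to0$, also shows $\|u_n\|$ stays bounded below; by Step 1 this bound is of order $K_\rho\to\infty$ in the scaling sense as $\rho\to0$. From this I will extract that $\lambda_n$ is bounded, has a limit $\lambda_\rho>0$, and satisfies $\rho^s\lambda_\rho\gtrsim\|u_n\|^p\to\infty$ as $\rho\to0$. This uses the other entry of the $\min$ in \eqref{cond-norm-V}, namely $s(\frac Nq-\frac{N-p}p)>0$, which is what gives $\lambda>0$.

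\textbf{Step 4: compactness, the main obstacle.} Radial compactness (Strauss) gives $u_n\to u$ in $L^q$, and also in $L^{p\alpha/(\alpha-1)}$ when $\alpha\in(N/p,\infty)$. In the endpoint cases $\alpha\in\{N/p,\infty\}$ I will use $V\to0$ at infinity: split $V$ into a compactly supported part plus a small $L^\alpha$ remainder, so that $\int V|u_n|^p\to\int V|u|^p$. Next, testing with $u_n-u$ and using the $(S_+)$ property of $-\Delta_p$ together with $\lambda_\rho>0$, which provides coercivity of the $L^s$ term, I get $u_n\to u$ in $X$. Hence $\|u\|_s=\rho$, $u$ is a solution with $J_V(u)=c^r_{V,\rho}$, and $P_V(u)=\lim P_V(u_n)=0$ by continuity. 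I expect this step, together with ensuring $\lambda_\rho>0$ rather than merely $\ge0$, to be the delicate point. The hard part is the $L^s$ convergence when $s<p$, since $L^s$ is not compactly embedded; it is recovered from the equation with $\lambda_\rho>0$, testing against $u_n-u$ through the monotonicity of $t\mapsto|t|^{s-2}t$.
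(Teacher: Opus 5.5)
Your proposal is correct. Steps 1--3 follow the paper closely: Gagliardo--Nirenberg mountain-pass geometry, the augmented functional $\tilde J_V(u,\tau)=J_V(u^\tau)$, and the same linear combination of the energy, Pohozaev and multiplier relations controlled by \eqref{cond-norm-V}. The one difference there is that you quote Ghoussoub's (or an Ekeland-type) min--max principle on the $C^1$ Finsler manifold $\mathcal{S}_{\rho,r}\times\R$, whereas the paper proves its own deformation result (Theorem \ref{th-bd-PS}).

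The genuine difference is Step 4. For $\alpha\in(\frac Np,\infty)$ the paper writes $u_n=\Psi_{\lambda_\rho}(|u_n|^{q-2}u_n-V|u_n|^{p-2}u_n+o_n(1))$ and uses the continuous right inverse of Proposition \ref{prop-quasi-lin-pb}. For $\alpha\in\{\frac Np,\infty\}$ it goes through the splitting Lemma \ref{splitting-lemma}. Terminating that splitting needs Lemma \ref{lemma-lower-bd}, hence the Pohozaev-manifold characterization of $c_\rho$ and the Pohozaev identity for every solution of \eqref{lim-eq} (Theorem \ref{cor-V>0}, which rests on the global boundedness Proposition \ref{prop-boundedness}). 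In the end the profiles $w^j$ are killed by radial compactness anyway.

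You treat all $\alpha$ at once. Write $V$ as a bounded, compactly supported part plus a part small in $L^\alpha$. For $\alpha=\infty$ this is where $V\to0$ at infinity enters. For $\alpha=N/p$, density of $L^\infty_c$ in $L^{N/p}$ already suffices, but the compact part must also be truncated to be bounded, since Rellich does not reach $L^{p^*}_{\rm loc}$. This gives $\int V|u_n|^{p-2}u_n(u_n-u)\to0$, which is the quantity you actually need, rather than convergence of $\int V|u_n|^p$. Monotonicity of $-\Delta_p$ (with $p\ge2$) and of $\lambda_n|t|^{s-2}t$, with $\lambda_n\ge\lambda_\rho/2>0$, then forces $u_n\to u$ in $X$.

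Your route is shorter and shows that the machinery of Section \ref{sec-global-bd} is not needed for Theorem \ref{main-th-hom}. The paper's route buys the splitting lemma and the Pohozaev identity for the limit problem, which are of independent interest and hold without radial symmetry.

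Two details still need writing out.
\begin{itemize}
\item When $s<p$, the expression for $\rho^s\lambda_n$ obtained from the multiplier equation and $P_V(u_n)\to0$ contains the negative term $-\frac{s(q-p)}{p(q-s)}\|u_n\|_p^p$. The positivity of $\frac Nq-\frac{N-p}{p}$ alone does not give $\rho^s\lambda_\rho\gtrsim\|u_n\|^p$. You must show $\|u_n\|_p^p=o(\|\nabla u_n\|_p^p)$ as $\rho\to0$, for instance by interpolating $L^p$ between $L^s$ and $L^q$ and applying Gagliardo--Nirenberg, as the paper does. The lower bound on $\|\nabla u_n\|_p$ is most easily obtained from $J_V(u_n)\to c^r_{V,\rho}$, which tends to $\infty$ as $\rho\to 0$.
\item In Step 1 with $s=p$ and $\alpha>N/p$, the norm $\|\nabla u\|_p$ does not control $\|u\|_{p\alpha/(\alpha-1)}$. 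Your lower bound therefore acquires an extra $-\frac{\rho^p}{p}S_{p,\alpha}^{-1}\|V\|_\alpha$, which is harmless for small $\rho$. Also, the fixed ratio $2$ between your two radii must depend on $\|V\|_\alpha/S_{p,\alpha}$; alternatively, put the outer radius at the maximizer of the Gagliardo--Nirenberg profile.
\end{itemize}
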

Our result works for $p=2$ too. We note that the trivial potential $V\equiv 0$ satisfies $(V_1)$, therefore Theorem \ref{main-th-hom} also applies to the homogeneous case $V\equiv 0$.\\

It is interesting to compare the solutions constructed in Theorem \ref{main-th-hom} with those constructed in Theorem $1.1$ of \cite{WLZL} for $s=2,\, V\equiv 0$. In order to do so, we set, for any $\rho\in(0,\rho_0)$,
\begin{equation}
\begin{aligned}
c_{V,\rho}&:=\inf_{\gamma\in\Gamma_{\rho}}\max_{t\in[0,1]}J_V(\gamma(t))\\
\Gamma_{\rho}&:=\{\gamma\in C([0,1],\mathcal{S}_{\rho}):\,\|\gamma(0)\|\le K_\rho,\,J_V(\gamma(1))<0\}.
\end{aligned}     
\end{equation}
Moreover, let $c_\rho^r:=c^{r}_{0,\rho}$ and $c_\rho:=c_{0,\rho}$.
\begin{remark}\label{rem-rearrangement}
\begin{itemize}
\item Note that, due to the Definition of $\inf$, we have $c^r_{V,\rho}\ge c_{V,\rho}$.
\item Due to the properties of the radial (Schwarz) rearrangement, we have $c_{V,\rho}^r=c_{V,\rho}$ if $V\le 0$, for any $\rho>0$. This is true since, if $u^*$ is the radial rearrangement of a function $u\in X$, we have
$$\|\nabla u^*\|_p\le \|\nabla u\|_p,\qquad\|u^*\|_t=\|u\|_t\qquad\forall\,t\in\{s\}\cup[p,p^*]$$
and the Hardy-Littlewood inequality yields that
$$\int_{\R^N}V|u^*|^p dx\le \int_{\R^N}V|u|^p dx\qquad\forall\,u\in X$$
if $V\le 0$. %
\end{itemize}
\end{remark}
Remark \ref{rem-rearrangement} yields that, for $V\equiv 0$ and $s=2$, our solutions have the same energy as the ones constructed in \cite{WLZL}. Therefore, it would be interesting to understand if the solutions constructed in \cite{WLZL} coincide with our solutions, or if such a solution is unique. If such a uniqueness result holds true, then the solution constructed in Theorem \ref{main-th-hom} in the case $V\equiv 0,\,s=2$ must coincide with the one of \cite{WLZL}.\\

The case $s=p,\, V\equiv0$ is treated in Theorem $1.4$ of \cite{zhang2022normalized}. However, our result is new, even in the case $V\equiv 0$, since it holds for any $s\in(1,p]$. This is possible since our proof relies on a new abstract deformation Theorem (see Theorem \ref{lemma-pseudo-grad}), in which the Banach structure of the ambient space is sufficient to construct a bounded Palais-Smale sequence of $J_V|_{\mathcal{S}_{\rho,r}}$ at level $c^r_{V,\rho}$. This is another relevant difference with \cite{WLZL}. We stress that Theorem \ref{th-bd-PS} provides an important generalization of Theorem $4.5$ of \cite{ghoussoub1993duality} to a possibly non-Hilbert setting.\\

Similar results were proved in Theorem $2$ of \cite{dhara2025normalised} and Theorem $1.4$ of \cite{PR} for a non-positive potential $V$ which is not necessarily radial. However, here we treat the more general case of a possibly sign-changing potential. Namely, we do not need the assumption $V\le 0$.\\

The idea of the proof is the following. First we construct a bounded Palais-Smale sequence $\{u_n\}\in \mathcal{S}_{\rho,r}$ of $J_V|_{\mathcal{S}_{\rho,r}}$ at level $c^r_{V,\rho}$ which almost satisfies the Pohozaev identity, that is $P_V(u_n)=o_n(1)$, then we extract a subsequence which is converging to a solution $u\in \mathcal{S}_{\rho,r}$ strongly in $X$. Due to the strong convergence in $X$, the Pohozaev identity $P_V(u)=0$ is also satisfied.\\

Finally, if $(V_1)$ is satisfied with $\alpha\in(\frac{N}{p},\infty)$, we use the compactness of the embedding $W^{1,p}_r(\R^N)\subset L^\frac{p\alpha}{\alpha-1}(\R^N)$ to prove the existence of a strongly convergent subsequence. If $\alpha=\frac{N}{p}$ or $\alpha=\infty$, compactness is recovered through a splitting Lemma which is a generalization of the one that we proved in \cite{dhara2025normalised} (see Lemma \ref{splitting-lemma}).\\

In particular, in the proof of the splitting Lemma, we need to use the fact that any weak solution $u\in X$ to 
\begin{equation}\label{lim-eq}
-\Delta_p u+{\rm sgn}(p-s)|u|^{p-2}u+\lambda |u|^{s-2}u=|u|^{q-2}u\qquad\text{in $\R^N$,}   
\end{equation}
with $\lambda>0$, satisfies the Pohozaev identity $P_0(u)=0$. This fact is known for $s=2$ (see \cite{wang2021normalized,guedda1989quasilinear}), but is new for $s\in(1,p]\setminus\{2\}$. In our paper, this fact follows as a corollary of a more general result.\\ 

More precisely, given an odd continuous function $f:\R\to\R$ satisfying the growth condition 
\begin{equation}\label{growth-f-qs}
-ct^{s-1}\le f(t)\le ct^{q-1}\qquad\forall\, t>0, 
\end{equation}
for some constant $c>0$ and a potential $V\in L^{N/p}_{loc}(\R^N)$, we say that $u\in W^{1,p}_{loc}(\R^N)$ 
is a local weak solution to 
\begin{equation}\label{eq-pohozaev-intr}
-\Delta_p u+V(x)|u|^{p-2}u=f(u)\qquad\text{in}\,\R^N,
\end{equation}
if
\begin{equation}
\int_{\R^N}|\nabla u|^{p-2}\nabla u\cdotp\nabla v\,dx+\int_{\R^N} V(x)|u|^{p-2}uv\, dx=\int_{\R^N}f(u)v \, dx\qquad\forall\, v\in C^\infty_c(\R^N). 
\end{equation}
Note that, if $V\in L^\alpha(\R^N)$ for some $\alpha\in[N/p,\infty]$, $u\in X$ is a local weak solution if and only if it is a weak solution, that is a critical point of $J_{V}$ constrained to $\mathcal{S}_\rho$. However, the notion of local weak solution is more flexible, since it requires only local integrability conditions about $V$ and $u$. In particular, it allows us to treat the case of a trapping potential.\\

In this setting, we prove the following result, which establishes that the Pohozaev identity holds for a quite general non-negative potential $V$.
\begin{theorem}\label{cor-V>0}
Assume that $2\le p<N$, $1<s\le p$, $q\in(p,p^*)$, $\beta:=\max\{2,N/p\},\,V\in L^\beta_{loc}(\R^N),\, V\ge 0$, 
and $f:\R\to\R$ is continuous, odd and fulfills \eqref{growth-f-qs}. Let $u\in X$ be a local weak solution to \eqref{eq-pohozaev-intr} such that 
\begin{equation}\label{cond-int-pot}
V(\cdotp)|\cdotp||u|^{p-1}|\nabla u|\in L^1(\R^N).    
\end{equation}
Then $u$ satisfies the Pohozaev identity 
\begin{equation}\label{pohozaev}
\frac{N-p}{p}\int_{\R^N}|\nabla u|^pdx-N\int_{\R^N}F(u) dx-\int_{\R^N}V(x)|u|^{p-2}u(x\cdotp\nabla u)dx=0, 
\end{equation}    
where $F(u):=\int_0^u f(t)dt$.
\end{theorem}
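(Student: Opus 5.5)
The plan is to prove the identity \eqref{pohozaev} in two stages. First we show that $u$ is bounded and locally $C^{1,\gamma}$. Then we test the equation with the Pohozaev multiplier $x\cdot\nabla u$, suitably truncated, and pass to the limit. The abstract indicates that boundedness comes from a new global boundedness result for subsolutions of a $p$-Laplace equation, so the first stage is where the new input is needed.

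\textbf{Step 1: boundedness.} Since $f$ is odd and $V\ge0$, Kato's inequality for the $p$-Laplacian (or testing with truncations of $u^+$ and $u^-$ separately) shows that $w=|u|$ is a weak subsolution of
\[
-\Delta_p w\le f(w)\,\sgn u\cdot u/|u|-V w^{p-1}\le c\,w^{q-1}\quad\text{in }\R^N .
\]
Here we use \eqref{growth-f-qs} to drop the negative part of $f$, and $V\ge0$ to drop the potential term. Since $w\in W^{1,p}(\R^N)$ and $q<p^*$, we write $c\,w^{q-1}=a(x)w^{p-1}$ with $a=c\,w^{q-p}\in L^{N/p+\varepsilon}_{loc}$, uniformly on unit balls; this needs $q<p^*$, possibly after a bootstrap. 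A Moser iteration (De Giorgi–Trudinger–Serrin type) on unit balls then gives $\sup_{B_1(y)}w\le C\|w\|_{L^p(B_2(y))}$, with $C$ independent of $y$. This is the global boundedness result: $u\in L^\infty(\R^N)$, and $u(x)\to0$ as $|x|\to\infty$. The role of $\beta=\max\{2,N/p\}$ is that, once $u\in L^\infty$, we have $V|u|^{p-2}u\in L^{\beta}_{loc}$. Local $C^{1,\gamma}$ regularity (DiBenedetto, Tolksdorf), or at least $W^{2,2}_{loc}$-type regularity of the vector field $|\nabla u|^{p-2}\nabla u$ (Lou, Cianchi–Maz'ya, for $p\ge2$ and right-hand side in $L^2_{loc}$), then justifies the computations below. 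I expect this step, getting uniform constants on all balls with only $V\ge0$, $V\in L^\beta_{loc}$, to be the main obstacle. I would first try to avoid $V$ entirely via the sign argument above.

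\textbf{Step 2: local Pohozaev identity.} Take $\varphi_R(x)=\varphi(x/R)$ with $\varphi\in C_c^\infty$ and $\varphi\equiv1$ on $B_1$. Test the equation with $\varphi_R\,x\cdot\nabla u$; this is admissible after approximation, by the regularity of Step 1 (as in \cite{guedda1989quasilinear}). Then use the pointwise identities
\[
|\nabla u|^{p-2}\nabla u\cdot\nabla(x\cdot\nabla u)=|\nabla u|^p+\tfrac1p\,x\cdot\nabla|\nabla u|^p,\qquad f(u)\,x\cdot\nabla u=x\cdot\nabla F(u).
\]
Integrating by parts, we obtain
\[
\Big(\tfrac{N-p}{p}\Big)\int\varphi_R|\nabla u|^p-N\int\varphi_RF(u)-\int\varphi_R V|u|^{p-2}u\,x\cdot\nabla u+E_R=0 ,
\]
where $E_R$ collects the terms containing $\nabla\varphi_R$. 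These are bounded by $C\int_{R<|x|<2R}\big(|\nabla u|^p+|F(u)|\big)$, because $|x||\nabla\varphi_R|\le C$.

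\textbf{Step 3: passing to the limit.} Since $u\in X=W^{1,p}\cap L^s$ and $|F(u)|\le C(|u|^s+|u|^q)$ by \eqref{growth-f-qs}, with $s\le p<q<p^*$, we have $F(u)\in L^1$ and $|\nabla u|^p\in L^1$. Hence $E_R\to0$ as $R\to\infty$. The potential term converges by dominated convergence thanks to \eqref{cond-int-pot}. The remaining terms converge by monotone or dominated convergence. Letting $R\to\infty$ yields \eqref{pohozaev}.
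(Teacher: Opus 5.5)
Your proposal is correct, and its overall structure matches the paper's. The paper also uses Kato's inequality (on $u^+$, then on $u^-$ via oddness of $f$), together with $V\ge 0$ and $f(t)\le ct^{q-1}$, to reduce to a nonnegative subsolution of $-\Delta_p w\le c\,w^{q-1}$. Boundedness of $u$ then puts $f(u)-V|u|^{p-2}u$ in $L^2_{loc}$, so $|\nabla u|^{p-2}\nabla u\in H^1_{loc}$ by \cite{antonini2023interior}. From there the cut-off Pohozaev argument of Theorem~\ref{th-Pohozaev} is your Steps 2--3.

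The genuine difference is how you get $u\in L^\infty(\R^N)$. The paper proves a global estimate (Proposition~\ref{prop-boundedness}) by a single Moser-type iteration on all of $\R^N$. It tests with $\min\{u,M\}^{p\chi+1}$, uses the Sobolev constant $S_p$ and H\"older against $\|u\|_{p^*}^{q-p}$, and takes exponents $\chi_n+1=(p^*/\gamma_0)^n$. This gives the explicit bound $\|u\|_\infty\le aS_p^{-c}\max\{\|u\|_{p^*},\|u\|_{p^*}^{1+\frac{q-p}{p^*-q}}\}$, and the same argument works on bounded domains.

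You instead apply the classical local Serrin--Trudinger estimate to $-\Delta_p w\le a(x)w^{p-1}$ with $a=c\,w^{q-p}$. Uniformity in the ball comes from $\|a\|_{L^{p^*/(q-p)}(B_2(y))}\le c\|u\|_{p^*}^{q-p}$ for every $y$. Here $p^*/(q-p)>N/p$ is exactly equivalent to $q<p^*$, so no bootstrap is needed.

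What each route buys:
\begin{itemize}
\item Your route uses only compactly supported test functions, which fits the notion of local weak solution. It also gives the extra decay $\sup_{B_1(y)}|u|\le C\|u\|_{L^p(B_2(y))}\to 0$ as $|y|\to\infty$.
\item The paper's route gives a quantitative bound depending only on $\|u\|_{p^*}$ and the Sobolev constant. This is what the authors present as their new global boundedness theorem.
\end{itemize}

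One caveat: the $C^{1,\gamma}$ option you mention (DiBenedetto, Tolksdorf) is not available in general. The right-hand side $f(u)-V|u|^{p-2}u$ is only in $L^\beta_{loc}$, and $\beta=\max\{2,N/p\}$ can be smaller than $N$. The tool that actually applies is the $L^2_{loc}$ second-order regularity of the flux $|\nabla u|^{p-2}\nabla u$, and that is the one the paper uses.
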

\begin{remark}
If $\lambda\in\R$, $f(u)=|u|^{q-2}u-\lambda|u|^{s-2}u-{\rm sgn}(p-s)|u|^{p-2}u$ and $u\in X$ is a weak solution to \eqref{eq-pohozaev-intr} such that $V(\cdotp)|u|^p\in L^1(\R^N)$, testing equation \eqref{eq-pohozaev-intr} with $u$ we can see that
\begin{equation}\label{test-u}
\int_{\R^N}|\nabla u|^pdx+{\sgn(p-s)}\int_{\R^N}|u|^pdx+\int_{\R^N}V(x)|u|^p dx+\lambda\int_{\R^N}|u|^s dx=\int_{\R^N}|u|^q dx.    
\end{equation}
Multiplying \eqref{test-u} by $N/s$ and subtracting to \eqref{pohozaev}, we can see that the Pohozaev identity is equivalent to $P_V(u)=0$ for weak solutions $u\in X$ to \eqref{eq-pohozaev-intr} with this particular nonlinearity.
\end{remark}
The reason of the integrability assumption about $V$ is the following. $V\in L^2_{loc}(\R^N)$ is necessary to apply Theorem $1.1$ of \cite{antonini2023interior}, while $V\in L^{N/p}_{loc}(\R^N)$ is required to give sense to treat local weak solutions.\\

The Pohozaev identity has been widely studied in the literature. See, for example, Lemma $2.4$ of \cite{zhang2022normalized} and Theorem $1$ of \cite{degiovanni2003regularity}. However, both in those results and in Theorem \ref{cor-loc-bd}, the local boundedness of the solutions is assumed to prove the Pohozaev identity. However, in Theorem \ref{cor-V>0} we do not need to explicitly assume local boundedness, since it is automatically satisfied. More precisely, we already know that, under the assumptions of Theorem \ref{cor-V>0}, any weak solution $u\in X$ to \eqref{eq-pohozaev-intr} is bounded. This follows from the following global boundedness result.
\begin{theorem}\label{th-boundedness}
Let $p\in(1,N),\,q\in(p,p^*)$ and let $u\in W^{1,p}(\R^N)$ be a weak subsolution to
$$-\Delta_p u\le u^{q-1}\qquad\text{in}\,\R^N$$
such that $u\ge 0$ in $\R^N$. Then $u\in L^\infty(\R^N)$.
\end{theorem}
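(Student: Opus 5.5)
I plan to prove Theorem \ref{th-boundedness} by a Moser iteration carried out on all of $\R^N$, using that $u^{q-1}$ is a subcritical perturbation. The structure is to first upgrade $u$ from $L^{p^*}$ to $L^r$ for every finite $r$, and then run a standard $L^\infty$ iteration, or a De Giorgi argument, whose constants are uniform in space.

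\emph{Step 1 (test functions).} For $L>0$ and $\beta\ge 1$ I set $u_L:=\min\{u,L\}$ and test the subsolution inequality with $\varphi:=u\,u_L^{p(\beta-1)}\in W^{1,p}(\R^N)$, which is nonnegative and admissible because $u_L$ is bounded. The standard computation gives
\[
\int_{\R^N}|\nabla(u\,u_L^{\beta-1})|^p\,dx\le C\beta^{p}\int_{\R^N}u^{q-1}\,u\,u_L^{p(\beta-1)}\,dx=C\beta^p\int_{\R^N}u^{q-p}\,(u\,u_L^{\beta-1})^p\,dx .
\]
I then apply the Sobolev inequality $\|w\|_{p^*}^p\le S^{-1}\|\nabla w\|_p^p$ with $w:=u\,u_L^{\beta-1}$.

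\emph{Step 2 (absorbing the potential $a:=u^{q-p}$).} This is the main obstacle. We have $a\in L^{p^*/(q-p)}$, and $p^*/(q-p)>N/p$ precisely because $q<p^*$. So $a$ lies in $L^{N/p}+L^\infty$ with the $L^{N/p}$ part as small as we like. Concretely, for $M>0$ I split $a=a\chi_{\{u>M\}}+a\chi_{\{u\le M\}}$. The first part satisfies $\|a\chi_{\{u>M\}}\|_{N/p}\to0$ as $M\to\infty$, since $u^{(q-p)N/p}$ is integrable: indeed $(q-p)N/p<p^*$, and on $\{u>M\}$ this power is dominated by $u^{p^*}$. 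Hölder's inequality gives
\[
\int a\chi_{\{u>M\}}w^p\le \|a\chi_{\{u>M\}}\|_{N/p}\|w\|_{p^*}^p ,
\]
so this term can be absorbed into the left-hand side once $C\beta^pS^{-1}\|a\chi_{\{u>M\}}\|_{N/p}<1/2$. Here $M$ depends on $\beta$, which is acceptable for reaching finitely many exponents. The remaining term is bounded by $M^{q-p}\int u^{p\beta}$. Letting $L\to\infty$ by monotone convergence yields $\|u\|_{p^*\beta}^{p\beta}\le C(\beta,M)\|u\|_{p\beta}^{p\beta}$. Starting from $\beta=1$ (where $u\in L^p$) and choosing $\beta_{k+1}=\beta_k\,p^*/p$, I obtain $u\in L^r$ for all $r\in[p,\infty)$ after finitely many steps.

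\emph{Step 3 ($L^\infty$ bound).} Once $u\in L^r$ for some $r$ with $r/(q-p)>N/p$ (in fact for all $r$), I have $a\in L^{\gamma}$ with $\gamma>N/p$ fixed. Then Hölder and interpolation give
\[
\int a w^p\le \|a\|_\gamma\|w\|_{p\gamma'}^p,\qquad p\gamma'<p^*,
\]
and interpolating $\|w\|_{p\gamma'}$ between $\|w\|_p$ and $\|w\|_{p^*}$ together with Young's inequality gives
\[
\|w\|_{p^*}^p\le C\beta^{\theta}\|w\|_p^p
\]
with a polynomial dependence on $\beta$. This is the classical Moser step, and iterating with $\beta_k=(p^*/p)^k$ gives $\|u\|_\infty\le C\|u\|_{p}^{\,\cdot}$ after checking that the product of the constants converges. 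Since all estimates are global integrals on $\R^N$, no cutoffs are needed, which is why the conclusion is global boundedness rather than local. As an alternative to Step 3 I could invoke local boundedness results (Serrin/Trudinger) on unit balls with uniform constants, since $a\in L^\gamma(\R^N)$ uniformly, and this would also give a uniform sup bound.
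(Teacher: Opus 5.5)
Your proof is correct, and at its core it is the paper's argument. Both run a Moser iteration globally on $\R^N$ without cutoffs, treat $a=u^{q-p}$ as a potential, and exploit that $a\in L^{p^*/(q-p)}$ with $p^*/(q-p)>N/p$ precisely because $q<p^*$.

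The implementation differs. In the proof of Proposition~\ref{prop-boundedness} the paper tests with $\min\{u,M\}^{p\chi+1}$ and keeps $\|u\|_{p^*}^{q-p}$ as a fixed H\"older factor, $\int u^{q-p}u^{p(\chi+1)}\,dx\le\|u\|_{p^*}^{q-p}\|u\|_{\gamma_0(\chi+1)}^{p(\chi+1)}$ with $\gamma_0=\frac{pp^*}{p^*+p-q}<p^*$. It then iterates $L^{\gamma_0(\chi+1)}\to L^{p^*(\chi+1)}$ with gain ratio $p^*/\gamma_0>1$, with no absorption, no interpolation with $L^p$ and no Young inequality.

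Your Step~2, the Brezis--Kato splitting of $a$ into a small $L^{N/p}$ piece plus a bounded piece, is therefore redundant. The hypothesis of your Step~3 already holds at the outset with $\gamma=p^*/(q-p)$, and for this $\gamma$ one has $p\gamma'=\gamma_0$, the paper's exponent.

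Step~2 also costs you something. The level $M$ depends on the distribution function of $u$, not on a norm of $u$, so as written your argument gives only qualitative boundedness. That is all Theorem~\ref{th-boundedness} asserts. The paper, however, obtains the quantitative estimate \eqref{est-u-Lq} in terms of $\|u\|_{p^*}$ and $S_p(\Omega)$ alone, which is what its bounded-domain and $\Omega$-uniform statements rely on.

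If you drop Step~2 and start Step~3 at $\gamma=p^*/(q-p)$ and $\beta=p^*/p$, your interpolation-plus-Young Moser step recovers a bound of that same form. The resulting exponent on $\|u\|_{p^*}$ is $1+\frac{q-p}{p^*-q}$, the same as the paper's, since your interpolation parameter is $\vartheta=\frac{q-p}{p^*-p}$.
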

Theorem \ref{th-boundedness} is a particular case of Proposition \ref{prop-boundedness}, which is a generalization of Proposition $2.4$ of \cite{PR}, which treats the case $p=2,\,N=3$ in a bounded domain. The technique used in the proof is similar to the one used in the proof of Theorem $1.3$ of \cite{pucci2024bifurcation}.\\

Note that Theorem \ref{cor-V>0} yields that the Pohozaev identity is true for weak solutions $u\in X$ to \eqref{eq-Pohozaev} in case $f(t)=|t|^{q-2}t-\lambda |t|^{s-2}t-|u|^{p-2}u,\, V=0$, with $2<p<q<p^*,\,1<s\le p$. The case $s=2$ is treated in \cite{wang2021normalized,guedda1989quasilinear}. This result is new for $s\ne 2$. Moreover, it holds for weak solutions $$u\in\mathcal{H}_k:=\left\{u\in W^{1,p}(\R^N)\cap L^2(\R^N):\int_{\R^N}|x|^k|u|^pdx<\infty\right\}$$
to \eqref{eq-Pohozaev} in case $f(t)=|t|^{q-2}t-\lambda t$, with $2<p<q<p^*$, and $V(x)=|x|^k$, for any $k>0$. This case is treated in \cite{wang2023normalized}.\\

Finally, in the case $p=2$, we will see that the Pohozaev identity holds for possibly unbounded weak solutions to \eqref{eq-pohozaev-intr} with possibly unbounded and sign-changing potential $V$. This will be done in Proposition \ref{prop-p=2}. In fact, for $p=2$, the Calder\'{o}n-Zygmund estimates a bootstrap argument (see Theorem $9.11$ of \cite{gilbarg1977elliptic}) enable us to prove that, if $V$ is locally bounded, any weak solution is locally bounded too, irrespectively of the sign of $V$. However, such regularity estimates are not available for $p\ne 2$. It is actually an interesting open problem to see if the Pohozaev identity holds for $p\ne 2$ even in the case of possibly unbounded weak solutions.\\

The plan of the paper is the following: in Section \ref{sec-global-bd} we prove the global boundedness result and the Pohozaev identity, that is Theorems \ref{th-boundedness} and \ref{cor-V>0}. In Section \ref{sec-defo} we prove our deformation Theorem (see Theorem \ref{th-bd-PS}) and apply it to construct a bounde Palias-Smale sequence. In Section \ref{sec-right-inv} we construct the right inverse of a useful possibly nonlinear operator and in Section \ref{sec-proof} we finally conclude the proof of Theorem \ref{main-th-hom}.

\section{Global boundedness and the Pohozaev identity}\label{sec-global-bd}
In this section, we will assume that $f:\R\to\R$ is a continuous function fulfilling 
\begin{equation}\label{growth-f}
|f(t)|\le c(|t|^{q-1}+|t|^{s-1})    \qquad\forall\, t\in\R,
\end{equation}
and set $F(t):=\int_0^t f(s)ds$.
\begin{theorem}\label{th-Pohozaev}
Assume that $2\le p<N$, $f:\R\to\R$ is continuous and satisfies \eqref{growth-f} and $V\in L^{N/p}_{loc}(\R^N)$. 
Assume furthermore that $u\in W^{1,p}(\R^N)\cap L^s(\R^N)$ is a local weak solution to
\begin{equation}
\label{eq-Pohozaev}
-\Delta_p u+V(x)|u|^{p-2}u=f(u)\qquad\text{in}\,\R^N
\end{equation}
satisfying \eqref{cond-int-pot} and $f(u)-V|u|^{p-2}u\in L^2_{loc}(\R^N)$. Then $u$ satisfies the Pohozaev identity
\begin{equation}\label{pohozaev}
\frac{N-p}{p}\int_{\R^N}|\nabla u|^pdx-N\int_{\R^N}F(u) dx-\int_{\R^N}V(x)|u|^{p-2}u(x\cdotp\nabla u)dx=0.  
\end{equation}    
\end{theorem}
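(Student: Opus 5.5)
The plan is to treat \eqref{eq-Pohozaev} as $-\Delta_p u=g$ with $g:=f(u)-V|u|^{p-2}u\in L^2_{loc}(\R^N)$, and then to test it against the Pohozaev multiplier $x\cdot\nabla u$ cut off at infinity. The paper flags exactly this route: Theorem $1.1$ of \cite{antonini2023interior} says that when $p\ge 2$ and $g\in L^2_{loc}$, a local weak solution satisfies $|\nabla u|^{p-2}\nabla u\in W^{1,2}_{loc}(\R^N)$. This second-order regularity is what lets us justify the computation without assuming $u$ is locally bounded.

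\textbf{Step 1: pointwise identity.} Since $|\nabla u|^{p-2}\nabla u\in W^{1,2}_{loc}$, the equation holds a.e. as $-{\rm div}(|\nabla u|^{p-2}\nabla u)=g$, and we may multiply by $(x\cdot\nabla u)\varphi_R$. Here $\varphi_R(x)=\varphi(x/R)$, with $\varphi\in C^\infty_c$, $0\le\varphi\le1$, $\varphi=1$ on $B_1$ and $\varphi=0$ outside $B_2$. We then need the distributional identities
\begin{equation}\notag
{\rm div}\big(|\nabla u|^{p-2}\nabla u\,(x\cdot\nabla u)\big)={\rm div}(|\nabla u|^{p-2}\nabla u)(x\cdot\nabla u)+|\nabla u|^p+\tfrac1p\,x\cdot\nabla(|\nabla u|^p),
\end{equation}
together with $f(u)\,x\cdot\nabla u=x\cdot\nabla F(u)$. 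To establish them rigorously, I would mollify $u$ locally, or use the chain rule for $W^{1,2}_{loc}$ vector fields. Both Sobolev factors have enough integrability: $|\nabla u|^{p-1}\in L^{p/(p-1)}$, and $|\nabla u|^{p-2}\nabla u\in W^{1,2}_{loc}$ gives a second derivative of $u$ in a weighted sense.

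\textbf{Step 2: integration and limit.} Integrating by parts against $\varphi_R$ yields
\begin{equation}\notag
\int\Big(\tfrac{N-p}{p}|\nabla u|^p-NF(u)\Big)\varphi_R+\int V|u|^{p-2}u(x\cdot\nabla u)\varphi_R=\text{error terms},
\end{equation}
where every error term contains $x\cdot\nabla\varphi_R$, which is bounded by $2\|\nabla\varphi\|_\infty$ and supported in $R\le|x|\le2R$. These errors are bounded by the tails of $|\nabla u|^p$ and $|F(u)|$, so they vanish as $R\to\infty$. The tail of $F(u)$ is controlled because $|F(u)|\le c(|u|^q+|u|^s)$, with $u\in L^s\cap L^{q}$ by the Sobolev embedding since $q<p^*$. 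The potential term passes to the limit by dominated convergence, using \eqref{cond-int-pot}. Comparing signs with \eqref{pohozaev}, the $V$ term should come out with coefficient $-1$; I would fix this by keeping $V$ on the left side as $V|u|^{p-2}u\,x\cdot\nabla u$ rather than converting it to $x\cdot\nabla(|u|^p/p)$. That conversion would require $x\cdot\nabla V$, which we do not have.

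\textbf{Main obstacle.} The delicate point is justifying the pointwise product rule for $|\nabla u|^{p-2}\nabla u\,(x\cdot\nabla u)$, and in particular the term $x\cdot\nabla(|\nabla u|^p)/p=|\nabla u|^{p-2}\nabla u\cdot D^2u\,x$, when we only know $|\nabla u|^{p-2}\nabla u\in W^{1,2}_{loc}$ and not $u\in W^{2,\cdot}_{loc}$.

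I would first try writing $|\nabla u|^p=\Phi(|\nabla u|^{p-2}\nabla u)$ with $\Phi(w)=|w|^{p/(p-1)}$, which is $C^1$. The chain rule in $W^{1,1}_{loc}$ then gives $\nabla|\nabla u|^p=\tfrac{p}{p-1}\nabla u\cdot D(|\nabla u|^{p-2}\nabla u)$ locally. That identity lets us verify the pointwise identity on $\{\nabla u\ne0\}$; on $\{\nabla u=0\}$ both sides vanish a.e. Local integrability of these products would follow from Hölder, using $\nabla u\in L^p$, $D(|\nabla u|^{p-2}\nabla u)\in L^2_{loc}$, and $p\ge2$. If this becomes too technical, the fallback is the standard approach of approximating by nondegenerate problems ($\varepsilon^2+|\nabla u|^2$) as in \cite{degiovanni2003regularity}.
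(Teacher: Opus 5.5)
Your proposal is essentially the paper's proof. The paper also uses Theorem 1.1 of \cite{antonini2023interior} to get $|\nabla u|^{p-2}\nabla u\in H^1_{loc}$ and an a.e.\ equation, computes ${\rm div}\bigl((x\cdot\nabla u)|\nabla u|^{p-2}\nabla u-x|\nabla u|^p/p\bigr)=(x\cdot\nabla u)\Delta_p u+\frac{p-N}{p}|\nabla u|^p$, and removes a cutoff $T_R$ as $R\to\infty$; it does the divergence computation purely formally, so your concern about justifying it is more careful than the paper. Note that on $\{\nabla u\neq 0\}$ this also needs the a.e.\ derivative of $\nabla u$ to be symmetric, which you can get from ${\rm curl}\,\nabla u=0$ applied to the $W^{1,2}_{loc}$ truncations $\psi(|\nabla u|^{p-1})\nabla u$ with $\psi$ vanishing near $0$; the chain rule for $|\nabla u|^p$ alone does not give it.

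The one slip is the sign of the $V$ term in your intermediate identity. With source $g=f(u)-V|u|^{p-2}u$, the computation directly yields $\frac{N-p}{p}\int|\nabla u|^p\varphi_R-N\int F(u)\varphi_R-\int V|u|^{p-2}u\,(x\cdot\nabla u)\varphi_R=\text{error terms}$, so nothing needs ``fixing'', and whether you rewrite the $V$ term has no bearing on its sign.
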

\begin{proof}
Thanks to Theorem $1.1$ of \cite{antonini2023interior}, $|\nabla u|^{p-2}\nabla u\in H^1_{loc}(\R^N)$ and $u$ is a strong solution to \eqref{eq-Pohozaev}. Then
\begin{equation}\notag
\begin{aligned}
{\rm div}\left((x\cdotp \nabla u)|\nabla u|^{p-2}\nabla u-x\frac{|\nabla u|^p}{p}\right)&= (x\cdotp \nabla u) \Delta_p u+|\nabla u|^{p-2}\nabla u\cdotp \nabla(x\cdotp \nabla u)\\
&-\frac{N}{p}|\nabla u|^p-\frac{x}{p}\cdotp\nabla(|\nabla u|^{2\frac{p}{2}})=\\
&=(x\cdotp \nabla u) \Delta_p u+|\nabla u|^{p-2}(|\nabla u|^2+\frac{x}{2}\cdotp\nabla|\nabla u|^2)\\
&-\frac{N}{p}|\nabla u|^p-|\nabla u|^{p-2}\frac{x}{2}\cdotp\nabla |\nabla u|^2\\
&=(x\cdotp \nabla u) \Delta_p u+\frac{p-N}{p}|\nabla u|^p\in L^1_{loc}(\R^N)
\end{aligned}
\end{equation}
Let $T:\R^N\to[0,1]$ be a smooth radial function such that $T=1$ in $B_1(0)$ and $T=0$ in $\R^N\setminus\overline{B}_2(0)$. Note that 
\begin{equation}\label{est-nablaT}
|\nabla T(x)||x|\le c\qquad\forall\,x\in\R^N,
\end{equation}
for some constant $c\in (0,\infty)$. Setting, for any $R>1$, $T_R(x):=T(\frac{x}{R})$, we have
\begin{equation}\notag
\begin{aligned}
{\rm div}\left(T_R(x)(x\cdotp \nabla u)|\nabla u|^{p-2}\nabla u-T_R(x)x\frac{|\nabla u|^p}{p}\right)&= T_R(x){\rm div}\left((x\cdotp \nabla u)|\nabla u|^{p-2}\nabla u-x\frac{|\nabla u|^p}{p}\right)\\
&+\nabla T_R(x)\cdotp \left((x\cdotp \nabla u)|\nabla u|^{p-2}\nabla u-x\frac{|\nabla u|^p}{p}\right)
\end{aligned}    
\end{equation}
for a. e. $x\in\R^N$. Moreover, by the divergence Theorem
\begin{equation}\notag
\begin{aligned}
\int_{B_{2R}(0)}T_R(x)(x\cdotp\nabla u)f(u) dx&=\int_{B_{2R}(0)}T_R(x)(x\cdotp\nabla F(u))dx\\
&=\int_{B_{2R}(0)}{\rm div}(T_R(x)F(u)x)dx-\int_{B_{2R}(0)}{\rm div}(T_R(x)x)F(u)dx\\
&=-N\int_{B_{2R}(0)}T_R(x)F(u)dx+o_R(1)=-N\int_{\R^N}F(u)dx+o_R(1)
\end{aligned}
\end{equation}
as $R\to\infty$, since $F(u)\in L^1(\R^N)$.\\

As a consequence, integrating over the ball $B_{2R}$ and using \eqref{est-nablaT}, we can see that
\begin{equation}\label{int-R}
\begin{aligned}
0&=\int_{\partial B_{2R}(0)}\left(T_R(x)(x\cdotp \nabla u)|\nabla u|^{p-2}(\nabla u\cdotp \nu)-T_R(x)\frac{|\nabla u|^p}{p}(x\cdotp\nu)\right)d\sigma\\
&=\int_{B_{2R}(0)} T_R(x){\rm div}\left((x\cdotp \nabla u)|\nabla u|^{p-2}\nabla u-x\frac{|\nabla u|^p}{p}\right)dx\\
&+\int_{B_{2R}(0)} \nabla T_R(x)\cdotp \left((x\cdotp \nabla u)|\nabla u|^{p-2}\nabla u-x\frac{|\nabla u|^p}{p}\right)dx\\
&=\int_{B_{2R}(0)}T_R(x)\left((x\cdotp \nabla u) \Delta_p u+\frac{p-N}{p}|\nabla u|^p\right)dx+o_R(1)\\
&=\int_{B_{2R}(0)}T_R(x)\left((x\cdotp\nabla u)(V(x)|u|^{p-2}u-f(u))+\frac{p-N}{p}|\nabla u|^p\right)dx+o_R(1)\\
&=\int_{\R^N}V(x)|u|^{p-2}u(x\cdotp\nabla u)dx+N\int_{\R^N}F(u)dx+\frac{p-N}{p}\int_{\R^N}|\nabla u|^pdx+o_R(1)
\end{aligned}
\end{equation}
as $R\to\infty$, since $u\in W^{1,p}(\R^N)$ and $V(\cdotp)|\cdotp||u|^{p-1}|\nabla u|\in L^{\frac{N}{p-1}}(\R^N)$. Hence the result is proved.
\end{proof}
Applying Theorem \ref{th-Pohozaev}, we can prove the following result.
\begin{theorem}\label{cor-loc-bd}
Assume that $2\le p<N$, $f$ is continuous and satisfies \eqref{growth-f} and $V\in L^{\beta}_{loc}(\R^N) $ with $\beta:=\max\{N/p,2\}$. 
Assume that $u\in W^{1,p}(\R^N)\cap L^\infty_{loc}(\R^N)$ is a local weak solution to \eqref{eq-Pohozaev} satisfying \eqref{cond-int-pot}. Then $u$ satisfies the Pohozaev identity
\begin{equation}\label{pohozaev}
\frac{N-p}{p}\int_{\R^N}|\nabla u|^pdx-N\int_{\R^N}F(u) dx-\int_{\R^N}V(x)|u|^{p-2}u(x\cdotp\nabla u)dx=0, 
\end{equation}    
where $F(u):=\int_0^u f(t)dt$.
\end{theorem}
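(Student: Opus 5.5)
The plan is to reduce Theorem \ref{cor-loc-bd} to Theorem \ref{th-Pohozaev}. The only hypothesis of Theorem \ref{th-Pohozaev} not explicitly assumed here is
$$g:=f(u)-V|u|^{p-2}u\in L^2_{loc}(\R^N),$$
so this is what we will check. The rest of the hypotheses match directly:
\begin{itemize}
\item $2\le p<N$;
\item $f$ is continuous and satisfies \eqref{growth-f};
\item $V\in L^\beta_{loc}(\R^N)\subset L^{N/p}_{loc}(\R^N)$, since $\beta\ge N/p$;
\item $u\in W^{1,p}(\R^N)\cap L^s(\R^N)$ is a local weak solution satisfying \eqref{cond-int-pot}.
\end{itemize}
Strictly, the membership $u\in L^s(\R^N)$ should be read from the ambient setting $u\in X$. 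We also need $F(u)\in L^1(\R^N)$, which was used in the proof of Theorem \ref{th-Pohozaev}. If $s<p$ and $u\notin L^s$ this could fail, so we take $u\in X$ as in the intended application.

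Fix a compact set $K\subset\R^N$ and set $M:=\|u\|_{L^\infty(K)}<\infty$, which is finite by the local boundedness assumption.

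For the first term, continuity of $f$ (or directly \eqref{growth-f}) gives
$$|f(u)|\le c(M^{q-1}+M^{s-1})\qquad\text{on } K,$$
so $f(u)\in L^\infty(K)\subset L^2(K)$.

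For the second term,
$$|V|u|^{p-2}u|\le M^{p-1}|V|\qquad\text{on } K.$$
Since $V\in L^\beta(K)$ with $\beta\ge 2$, and $K$ has finite measure, we get $V\in L^2(K)$. Hence $V|u|^{p-2}u\in L^2(K)$.

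Combining the two bounds, $g\in L^2(K)$ for every compact $K$, that is, $g\in L^2_{loc}(\R^N)$. Theorem \ref{th-Pohozaev} then applies and yields \eqref{pohozaev}.

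No step here should be a serious obstacle. The only point needing care is the integrability bookkeeping: the exponent $\beta=\max\{N/p,2\}$ is chosen precisely so that $V$ is in $L^{N/p}_{loc}$, which gives meaning to local weak solutions, and in $L^2_{loc}$, which is what makes $g\in L^2_{loc}$ and so allows Theorem $1.1$ of \cite{antonini2023interior} to be used inside Theorem \ref{th-Pohozaev}.
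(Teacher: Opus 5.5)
Your proposal is correct and is the paper's argument: it checks that $f(u)\in L^\infty_{loc}(\R^N)$ and $V|u|^{p-2}u\in L^2_{loc}(\R^N)$ (using $V\in L^2_{loc}$), then applies Theorem \ref{th-Pohozaev}. Your caveat is also well taken. Theorem \ref{th-Pohozaev} and its use of $F(u)\in L^1(\R^N)$ require $u\in L^s(\R^N)$, which the statement does not list. For $s<p$ this does not follow from $u\in W^{1,p}(\R^N)\cap L^\infty_{loc}(\R^N)$, so it has to be taken from the ambient assumption $u\in X$, as you do; the paper leaves this implicit.
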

\begin{proof}
Since $u\in L^\infty_{loc}(\R^N)$ and $f$ is continuous, $f(u)\in L^\infty_{loc}(\R^N)$. Moreover, since $V\in L^2_{loc}(\R^N)$, we have $V|u|^{p-2}u\in L^2_{loc}(\R^N)$. Hence the conclusion follows directly from Theorem \ref{th-Pohozaev}.  
\end{proof}
In the case $p=2$ and $f$ satisfying \ref{growth-f}, we have the following result.
\begin{proposition}\label{prop-p=2}
Assume that $p=2$, $f:\R\to\R$ is continuous, and fulfills \eqref{growth-f} and $V\in L^{\infty}_{loc}(\R^N)$. 
Let $u\in H^1(\R^N)\cap L^s(\R^N)$ be a weak solution to \eqref{eq-Pohozaev} fulfilling \eqref{cond-int-pot}. Then $u$ satisfies the Pohozaev identity \eqref{pohozaev}.
\end{proposition}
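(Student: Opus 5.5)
The plan is to reduce Proposition \ref{prop-p=2} to Theorem \ref{cor-loc-bd} (with $p=2$, $\beta=\max\{N/2,2\}$), by showing that any weak solution $u\in H^1(\R^N)\cap L^s(\R^N)$ of \eqref{eq-Pohozaev} is locally bounded. Since $V\in L^\infty_{loc}(\R^N)\subset L^\beta_{loc}(\R^N)$ and \eqref{cond-int-pot} is assumed, once $u\in L^\infty_{loc}(\R^N)$ is established the Pohozaev identity \eqref{pohozaev} follows directly from Theorem \ref{cor-loc-bd}. The sign of $V$ plays no role, because for $p=2$ the term $V u$ is linear in $u$ and can be moved to the right-hand side.

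To obtain local boundedness, fix a ball $B_R$ and write the equation as $-\Delta u=g$ with $g:=f(u)-Vu$. By \eqref{growth-f}, $|g|\le c(|u|^{q-1}+|u|^{s-1})+\|V\|_{L^\infty(B_{2R})}|u|$ on $B_{2R}$. Since $s\le 2$, the term $|u|^{s-1}$ is locally in every $L^r$ with $r\le 2/(s-1)$, and in fact $|u|^{s-1}\le 1+|u|$, so locally $|g|\le C(1+|u|+|u|^{q-1})$. Then run the standard Brezis--Kato / Calder\'on--Zygmund bootstrap (Theorem 9.11 of \cite{gilbarg1977elliptic}). If $u\in L^{r}_{loc}$, then $g\in L^{r/(q-1)}_{loc}$, hence $u\in W^{2,r/(q-1)}_{loc}$, and by Sobolev embedding $u\in L^{r'}_{loc}$ with $1/r'=(q-1)/r-2/N$. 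Starting from $r_0=2^*$, the exponents increase provided $(q-1)/r-2/N<1/r$, i.e.\ $r>N(q-2)/2$. Since $q<2^*$, we have $N(q-2)/2<2^*$, so this holds at $r_0=2^*$, and the gap $1/r-1/r'=2/N-(q-2)/r$ grows as $r$ increases. Hence after finitely many steps $r/(q-1)>N/2$, so $u\in W^{2,r/(q-1)}_{loc}\subset C^{0}_{loc}$, giving $u\in L^\infty_{loc}$. Each step is performed on a slightly smaller ball, which is harmless since there are finitely many steps.

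The main technical point is justifying the first step of the bootstrap. A weak $H^1$ solution must be identified with the $W^{2,r}$ strong solution given by Calder\'on--Zygmund theory. This is done by the usual uniqueness argument: the difference of $u$ with the Newtonian-potential solution of $-\Delta w=\chi_{B}g$ on the ball is weakly harmonic, hence smooth inside. Borderline exponents where $r/(q-1)$ hits $N/2$ exactly can be avoided by slightly decreasing $r$. As an alternative route one could instead invoke the Moser/Brezis--Kato iteration with the coefficient $a(x):=Vu\,/u+f(u)/u\in L^{N/2}_{loc}$, but the bootstrap above is the approach indicated in the introduction and suffices.
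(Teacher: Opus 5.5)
Your proposal is correct and follows the paper's route: a Calder\'on--Zygmund bootstrap gives $u\in L^\infty_{loc}(\R^N)$, and then Theorem \ref{cor-loc-bd} (using $V\in L^\infty_{loc}\subset L^\beta_{loc}$) yields \eqref{pohozaev}; you also supply the exponent bookkeeping and the weak-to-strong identification that the paper's one-line proof leaves implicit. One caveat, which affects only your closing aside: for $s<2$ the coefficient $f(u)/u$ contains $|u|^{s-2}$, which is singular near the zero set of $u$ and need not lie in $L^{N/2}_{loc}$, so the Brezis--Kato alternative would require treating the sublinear part of $f$ as a bounded source term, but your main bootstrap argument does not depend on it.
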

\begin{proof}
By a bootstrap argument based on the Calder\'{o}n-Zygmund estimates, we can see that $u\in L^\infty_{loc}(\R^N)$, hence the Pohozaev identity holds thanks to Theorem \ref{cor-loc-bd}.
\end{proof}
\begin{remark}
Note that, in the situation of Proposition \ref{prop-p=2}, we do not know whether the weak solutions are globally bounded or not. However, the Pohozaev identity holds. This shows that, in some case, the local boundedness of the solution is enough for the Pohozaev identity to hold.
\end{remark}
The aim of the remaining part of the section is to prove that, if $f$ satisfies a slightly more restrictive growth condition then \eqref{growth-f}, namely there exists $c>0$ such that
\begin{equation}\label{growth-f-q}
f(t)\le ct^{q-1}\qquad\forall\, t>0,    
\end{equation}
and the potential satisfies $V\ge 0$ and certain integrability conditions, any weak solution $u\in W^{1,p}(\R^N)\cap L^s(\R^N)$ satisfies the Pohozaev identity \eqref{pohozaev}. To do so, it is enough to prove that such weak solutions are bounded. For this purpose, for $p\in(1,N)$ we set
$$S_p(\Omega):=\inf_{u\in D^{1,p}(\Omega)\setminus\{0\}}\frac{\|\nabla u\|^p_{L^{p}(\Omega)}}{\|u\|^p_{L^{p^*}(\Omega)}},$$
where $\Omega\subset\R^N$ is either a bounded set or $\R^N$.
\begin{proposition}\label{prop-boundedness}
Let $\Omega\subset\R^N$ be either a bounded open set or $\R^N$, $p\in(1,N)$ and $q\in(p,p^*)$.
\begin{itemize}
\item Let $u\ge 0$ be a subsolution to
\begin{equation}\label{diff-ineq}
-\Delta_p u\le u^{q-1}\qquad\text{in}\,\Omega,\qquad u\in W^{1,p}(\Omega).
\end{equation}
Then $u\in L^{\bar{q}}(\R^N)$ for any $\bar{q}\in[p^*,\infty]$ and 
\begin{equation}\label{est-u-Lq}
\|u\|_{L^{\bar{q}}(\Omega)}\le aS_p(\Omega)^{-c}\max\{\|u\|_{p^*},\,\|u\|_{p^*}^{1+\frac{q-p}{p^*-q}}\},
\end{equation}
for some constants $a=a(p,q)>0$ and $c=c(p,q)>0$ which do not depend neither on $\bar{q}$ nor on $\Omega$.
\item If, in addition, $u\in W^{1,p}_0(\Omega)$, the constant $S_p(\Omega)$ can be replaced by $S_p=S_p(\R^N)$ in \eqref{est-u-Lq}.
\end{itemize}

\end{proposition}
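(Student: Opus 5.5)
We would prove Proposition \ref{prop-boundedness} by a Moser iteration in which the superlinear right-hand side is absorbed with an explicitly quantified tail of $u$ in $L^{p^*}$. This is what yields a constant depending only on $\|u\|_{p^*}$ and $S_p(\Omega)$, and not on $\Omega$ or $\bar q$.

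\textbf{Step 1 (test functions).} For $\beta\ge 1$ and a truncation level $L>0$, we test \eqref{diff-ineq} with $\varphi=u\,\min\{u,L\}^{p(\beta-1)}$. This is an admissible nonnegative $W^{1,p}$ test function: in $W^{1,p}_0(\Omega)$ if $u\in W^{1,p}_0(\Omega)$, and otherwise we work with $D^{1,p}(\Omega)$ and the constant $S_p(\Omega)$. Set $w=u\min\{u,L\}^{\beta-1}$. The standard computation gives
$$\|\nabla w\|_p^p\le C\beta^{p-1}\int_\Omega u^{q-p}\,w^p\,dx .$$
Using the Sobolev inequality with $S_p(\Omega)$ on the left, we get
$$S_p(\Omega)\|w\|_{p^*}^p\le C\beta^{p-1}\int_\Omega u^{q-p}w^p\,dx .$$

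\textbf{Step 2 (splitting the potential $u^{q-p}$).} We write $u^{q-p}\le K^{q-p}+u^{q-p}\chi_{\{u>K\}}$. By H\"older with exponents $\frac{p^*}{q-p}$ and $\frac{p^*}{p^*-q+p}$, the second piece is bounded by $\|u\chi_{\{u>K\}}\|_{p^*}^{q-p}\|w\|^p_{\frac{pp^*}{p^*-q+p}}$. Interpolating $\frac{pp^*}{p^*-q+p}\in(p,p^*)$ between $L^p$ and $L^{p^*}$ and using Young's inequality, we absorb this term into the left side with a loss that is polynomial in $\beta$. Chebyshev's inequality, $|\{u>K\}|\le K^{-p^*}\|u\|_{p^*}^{p^*}$, is what lets us choose $K$ explicitly in terms of $\|u\|_{p^*}$ and $S_p(\Omega)$. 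The choice $K\sim\|u\|_{p^*}^{\frac{p^*}{p^*-q}}$ (times a power of $S_p(\Omega)$) is what should produce the exponent $1+\frac{q-p}{p^*-q}$ in \eqref{est-u-Lq}.

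A cleaner route, if the interpolation becomes messy, is to take the $L^{q}$-type exponent $r_k$ as the base of the iteration. Since $u\in L^{p^*}$, we may run the iteration directly on the sequence $\|u\|_{\chi^k p^*}$ with $\chi=\frac{p^*}{p^*-(q-p)}\cdot\frac{1}{?}$. Concretely, H\"older gives $\int u^{q-p}w^p\le\|u\|_{p^*}^{q-p}\|w\|^p_{\frac{pp^*}{p^*-q+p}}$. Since $\frac{pp^*}{p^*-q+p}<p^*$, we obtain the reverse-H\"older gain
$$\|u\|_{\beta p^*}\le (C\beta^{p-1}S_p(\Omega)^{-1}\|u\|_{p^*}^{q-p})^{1/(p\beta)}\|u\|_{\beta\frac{pp^*}{p^*-q+p}} ,$$
after letting $L\to\infty$ by monotone convergence. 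The iteration starts from $\beta_0=\frac{p^*-q+p}{p}>1$, so that the right side is $\|u\|_{p^*}$.

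\textbf{Step 3 (iteration).} We set $\chi:=\frac{p^*-q+p}{p}>1$ and $\beta_k=\chi^k$. Iterating gives
$$\|u\|_{\chi^{k+1}p^*/\chi}\le\prod_k\left(C\chi^{k(p-1)}S_p(\Omega)^{-1}\|u\|_{p^*}^{q-p}\right)^{1/(p\chi^k)}\|u\|_{p^*}.$$
The product converges because $\sum\chi^{-k}<\infty$ and $\sum k\chi^{-k}<\infty$. This yields an $L^\infty$ bound of the form $aS_p(\Omega)^{-c}\|u\|_{p^*}^{1+\theta}$, with $\theta=\frac{q-p}{p}\sum\chi^{-k}=\frac{q-p}{p}\frac{\chi}{\chi-1}=\frac{q-p}{p^*-q}\cdot\frac{p^*-q+p}{p}$. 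The exact exponent may need the $\max$ with $\|u\|_{p^*}$ as stated, obtained by bounding powers by $\max\{t,t^{1+\cdots}\}$. Every intermediate $L^{\bar q}$ norm with $\bar q\ge p^*$ then follows by interpolation between $L^{p^*}$ and $L^\infty$, with constants independent of $\bar q$.

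For the second bullet, when $u\in W^{1,p}_0(\Omega)$ the zero extension lies in $D^{1,p}(\R^N)$, so $S_p$ can be used in place of $S_p(\Omega)$. Theorem \ref{th-boundedness} is the case $\Omega=\R^N$.

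\textbf{Main obstacle.} The main obstacle is the admissibility of the truncated test functions together with the passage $L\to\infty$. In Step 1 the H\"older step needs $w\in L^{\frac{pp^*}{p^*-q+p}}$ uniformly in $L$, which must be controlled by the previous iteration step. We also have to track constants carefully so that $a$ and $c$ do not depend on $\Omega$ or $\bar q$.
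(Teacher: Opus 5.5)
Your ``cleaner route'' is essentially the paper's proof. Both run a Moser iteration with truncated powers of $u$, apply H\"older against $\|u\|_{p^*}^{q-p}$ with the intermediate exponent $\gamma_0=\frac{pp^*}{p^*+p-q}$, use Sobolev with $S_p(\Omega)$, pass to the limit in the truncation, and iterate with ratio $\chi=p^*/\gamma_0=\frac{p^*-q+p}{p}$. The paper tests with $\min\{u,M\}^{p\chi+1}$ rather than your $u\min\{u,L\}^{p(\beta-1)}$, which makes no difference, and your level-$K$/Chebyshev splitting is unnecessary.

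There is one arithmetic slip in the exponent. The iteration runs over $\beta_k=\chi^k$ with $k\ge 1$, since $\beta_1=\chi$ is the step whose right-hand side is $\|u\|_{p^*}$. The exponent is therefore
$$\frac{q-p}{p}\sum_{k\ge1}\chi^{-k}=\frac{q-p}{p}\cdot\frac{1}{\chi-1}=\frac{q-p}{p^*-q},$$
exactly as in \eqref{est-u-Lq}, not $\frac{q-p}{p}\frac{\chi}{\chi-1}$. The larger exponent you wrote could not be absorbed by the $\max$ when $\|u\|_{p^*}>1$, so you should correct the index rather than rely on that remark.
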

\begin{proof}
The proof follows the outlines of the proof of Proposition $2.4$ of \cite{PR}.\\

First we set, for $M>0$ and $x\in\Omega$, $v_M(x):=\min\{u(x),M\}$ and we test relation (\ref{diff-ineq}) with $v:=v_M^{p\chi+1}$, where $\chi>0$ will be fixed below. A direct computation shows that
$$\int_\Omega (p\chi+1)v_M^{p\chi}|\nabla v_M|^p dx
\le\int_\Omega u^{q-1}v dx.$$ 
Using the Sobolev embedding $W^{1,p}(\Omega)\subset L^{p^*}(\Omega)$, we can see that
\begin{equation}\notag
\begin{aligned}
&\int_\Omega u^{q-1}v dx\ge \int_\Omega (p\chi+1)v_M^{p\chi}|\nabla v_M|^p dx\\
&=\frac{p\chi+1}{(\chi+1)^p}\int_\Omega |\nabla (v_M^{\chi+1})|^p dx\ge S_p(\Omega)\frac{p\chi+1}{(\chi+1)^p}\|v_M\|^{p(\chi+1)}_{p^*(\chi+1)},
\end{aligned}
\end{equation}
where $S_p(\Omega)^{-1/p}$ is the best constant in the embedding $D^{1,p}(\Omega)\subset L^{p^*}(\Omega)$.\\ 

Note that, if $u\in W^{1,p}_0(\Omega)\subset W^{1,p}(\R^N)$, the constant $S_p(\Omega)$ can be replaced by $S_p=S_p(\R^N)$. For this reason the constant $C$ appearing in \ref{est-u-Lq} can be taken to be independent of $\Omega$ if $u\in W^{1,p}_0(\Omega)$.\\

On the other hand, by the definition of $v_M$ and the H\"older inequality, one has
\begin{equation}\notag
\int_\Omega u^{q-1}v dx\le\int_\Omega u^{q-p}u^{p(\chi+1)} dx\le \|u\|_{p^*}^{q-p}\|u\|^{p(\chi+1)}_{\gamma_0(\chi+1)},
\end{equation}
where we have set $\gamma_0:=\frac{p p^*}{p^*+p-q}\in(p,p^*)$ since $q\in(p,p^*)$. As a consequence, we have the estimate
$$\|v_M\|_{p^*(\chi+1)}\le \left(\frac{\chi+1}{(S_p(\Omega)(p\chi+1))^{1/p}}\right)^{\frac{1}{\chi+1}}\|u\|_{p^*}^{\frac{q-p}{p(\chi+1)}}\|u\|_{\gamma_0(\chi+1)},\qquad\forall\,M>0,\,\chi>0.$$
Using the fact that $u\ge 0$, $v_M\to u$ as $M\to\infty$ pointwise in $\Omega$ and the Fatou lemma,  we have as $M\to\infty$,
\begin{equation}\label{est-u-chi}
\|u\|_{p^*(\chi+1)}\le \left(\frac{\chi+1}{(S_p(\Omega)(p\chi+1))^{1/p}}\right)^{\frac{1}{\chi+1}}\|u\|_{p^*}^{\frac{q-p}{p(\chi+1)}}\|u\|_{\gamma_0(\chi+1)},\qquad\forall\,\chi>0. \end{equation}
Applying (\ref{est-u-chi}) with $\chi=\chi_1>0$ such that $1+\chi_1=p^*/\gamma_0$ we have
$$\|u\|_{p^*(\chi_1+1)}\le \left(\frac{\chi_1+1}{(S_p(\Omega)(p\chi_1+1))^{1/p}}\right)^{\frac{1}{\chi_1+1}}\|u\|_{p^*}^{1+\frac{q-p}{p(\chi_1+1)}}.$$
By induction, for any $n>0$ we can choose $\chi_n>0$ such that $1+\chi_n=(p^*/\gamma_0)^n$, so that, by \eqref{est-u-chi}
$$\|u\|_{p^*(\chi_n+1)}\le S_p(\Omega)^{-\frac{1}{p}\sum_{k=1}^n\frac{1}{\chi_k+1}}\prod_{k=1}^n\left(\frac{\chi_k+1}{(p\chi_k+1)^{1/p}}\right)^{\frac{1}{\chi_k+1}}\|u\|_{p^*}^{1+\frac{q-p}{p}\left(\sum_{k=1}^n\frac{1}{\chi_k+1}\right)},$$
for any $n\ge 1$. We note that $\chi_n\to\infty$. Using that the function $\varphi(y):=\left(\frac{y+1}{(py+1)^{1/p}}\right)^{\frac{1}{(y+1)^{1/p}}}$ is bounded on $(0,\infty)$ and the properties of the geometric series with general ratio $\gamma_0/p^*$, we conclude that
\begin{equation}
\begin{aligned}
\|u\|_{p^*(\chi_n+1)}&\le (\sup_{y\in(0,\infty)}\varphi(y))^{\sum_{k=1}^n\left(\frac{p^*}{\gamma_0}\right)^{k\left(1-\frac{1}{p}\right)}}S_p(\Omega)^{-c}\|u\|_{p^*}^{1+\frac{q-p}{p^*-q}(1-(\gamma_0/p^*)^n)}\\
&\le a S_p(\Omega)^{-c}\|u\|_{p^*}^{1+\frac{q-p}{p^*-q}(1-(\gamma_0/p^*)^n)},\qquad\forall\, n\ge 1,   
\end{aligned}   
\end{equation}
for some constants $a=a(p,q)>0,\,c=c(p,q)>0$ depending on $p$ but not on $\Omega$. We refer to the proof of \cite[Theorem 1.3]{PWZ} for the details.\\
								
Taking $\bar{q}>p^*$, $n\ge 1$ such that $p^*(\chi_n+1)>\bar{q}$ and $\alpha\in(0,1)$ defined by $$\frac{1}{\bar{q}}=\frac{\alpha}{p^*}+\frac{1-\alpha}{p^*(\chi_n+1)},$$  
an interpolation inequality gives
\begin{equation}
\begin{aligned}
\|u\|_{\bar{q}}&\le \|u\|_{p^*}^\alpha\|u\|^{1-\alpha}_{p^{*}(\chi_n+1)} \le a S_p(\Omega)^{-c}
\|u\|_{p^*}^{1+\frac{q-p}{p^*-p}(1-(\gamma_0/p^*)^n)(1-\alpha)}\\
&\le a S_p(\Omega)^{-c}\max\{\|u\|_{p^*},\|u\|_{p^*}^{1+\frac{q-p}{p^*-p}}\}=:S_p(\Omega)^{-c}g(\|u\|_{p^*}),    
\end{aligned} 
\end{equation}
where $a>0$ and $c>0$ are independent of $p,\,q$ and $\Omega$. This concludes the proof for $\bar{q}\in(p^*,\infty)$.\\
								
In orded to treat the case $q=\infty$, we note that the function $v(x):=\min\{u(x),S_p(\Omega)^{-c}g(\|u\|_{p^*})+1\}$ is bounded in $\Omega$ and satisfies $\|v\|_{\bar{q}}\le\|u\|_{\bar{q}}\le S_p(\Omega)^{-c}g(\|u\|_{p^*})$ for any $\bar{q}\ge p^*$, hence
$$\|v\|_\infty=\lim_{\bar{q}\to\infty}\|v\|_{\bar{q}}\le S_p(\Omega)^{-c}g(\|u\|_{p^*}).$$
This yields that $v(x)=\min\{u(x),S_p(\Omega)^{-c}g(\|u\|_{p^*})+1\}\le S_p(\Omega)^{-c}g(\|u\|_{p^*})$, hence $v=u$, so in particular $u\in L^\infty(\Omega)$ and $\|u\|_\infty\le S_p(\Omega)^{-c}g(\|u\|_{p^*})$.
\end{proof}
\begin{remark}
We stress that, if $\Omega\subset\R^N$ is a bounded open set and $u\in W^{1,p}(\Omega)$, the radially decreasing rearrangement $u^*$ of $u$ satisfies P\'olya-Szeg\H{o} inequality
$$\|\nabla u^*\|_{L^p(B_{|\Omega|})}\le\|\nabla u\|_{L^p(\Omega)},\quad\|u^*\|_{L^q(B_{|\Omega|})}=\|u\|_{{L^q(\Omega)}}$$
for any $q\in[p,p^*]$, where $B_{|\Omega|}$ is the ball of measure $|\Omega|$ centered at $0$. As a consequence, $S_p(\Omega)\le S_p(B_{|\Omega|})$. Therefore, the estimate provided by \eqref{est-u-Lq} can be rephrased by saying that
\begin{equation}\label{est-u-Lq-unif}
\|u\|_{L^{\bar{q}}(\Omega)}\le a\max\{\|u\|_{p^*},\,\|u\|_{p^*}^{1+\frac{q-p}{p^*-q}}\},
\end{equation}
for some constant $a>0$ depending on $p,\,q$ and $|\Omega|$ if $\Omega$ is bounded. If $u\in W^{1,p}_0(\Omega)$, the constant $a$ can be chosen to be independent of $\Omega$.
\end{remark}
Proposition \ref{prop-boundedness} is a version of the boundedness results given by Corollary $1.1$ of \cite{guedda1989quasilinear} for $q\in(p,p^*)$ in a wider class of domains (in \cite{guedda1989quasilinear} only bounded domains are considered). In dimension $N=3$ and for $p=2$, similar results in $\Omega=\R^3$ can be found in Theorem $1.3$ of \cite{pucci2024bifurcation} and for bounded domains in Proposition $2.4$ of \cite{PR}. However, the technique used in the proof of Proposition $2.4$ of \cite{PR} can be extended to possibly unbounded domains and any $p\in(1,N)$.\\

Now we are ready to prove Theorem \ref{cor-V>0}.
\begin{proof}[Proof of Theorem \ref{cor-V>0}]
Due to the assumptions about $p,\,q,\,s$ and $V$, we can see that $\Delta_p u\in L^1_{loc}(\R^N)$. Moreover, since $\nabla u\in L^p(\R^N)$, $|\nabla u|^{p-1}\in L^1(\partial B)$ for any ball $B\subset\R^N$. Therefore, by the Kato inequality for the $p$-Laplacian, stated in Theorem $3.4$ of~\cite{HORIUCHI2021}, we have $$\Delta_p u^+\ge \chi_{\{u\ge 0\}}\Delta_p u\qquad\text{in}\,\mathcal{D}'(\Omega),\,\forall\,\Omega\subset\subset\R^N.$$
As a consequence, $u^+\in W^{1,p}(\R^N)$ is a non-negative subsolution to $$-\Delta_p u^+\le f(u^+)-V(x)(u^+)^{p-1}\qquad\text{in}\,\R^N.$$ 
Since $f$ satisfies \eqref{growth-f-qs} and $V\ge 0$, $u^+$ is also a subsolution to
$$-\Delta_p u^+\le c(u^+)^{q-1}\qquad\text{in}\,\R^N.$$
As a consequence, by Proposition \ref{prop-boundedness}, $u^+\in L^\infty(\R^N)$. Since $f$ is odd, $-u$ is also a solution to \eqref{eq-pohozaev-intr}, therefore the same argument applies to $(-u)^+=u^-$. Hence $u^-\in L^\infty(\R^N)$ too. By Theorem \ref{cor-loc-bd}, this concludes the proof.
\end{proof}

\section{A deformation Theorem}\label{sec-defo}
In this section, we show how to construct a Palais-Smale sequence of a given functional (along {\em paths})  on a Hilbert manifold at a certain given inf-max level.\\

The setting is the following. We consider a Banach space $X$ 
Moreover, given a functional $G\in C^1(X,\R)$, we consider the set
$$M:=\{u\in X:\,G(u)=0\}\subset X.$$
We assume that $G'(u)\ne 0$ for any $u\in M$, so that $M$ is a $C^1$ manifold of codimension $1$.\\


Given a functional $J\in C^1(X,\R)$, we want to find a Palais-Smale sequence of $J$ (along {\em paths}) constrained on the manifold $M$.\\

For any $u\in M$, the tangent space to $M$ at $u$ is given by $$T_u M=\{v\in X:\,G'(u)[v]=0\}={\rm Ker}(G'(u))\subset X.$$
In these notations, for any $u\in M$, $J'_M$ is in the topological dual space $(T_u M)^\ast$ of $T_u M$. Then norm of a functional $F\in (T_u M)^\ast$ is given by
$$\|F\|_\ast:=\sup\{|F[v]|:\,v\in T_u M,\,\|v\|_{X}=1\}.$$
Let $T(U):=\cup_{u\in M}T_u M$ denote the tangent bundle to $M$ and let $\tilde{M}:=\{u\in M:\,J'_M(u)\ne 0\}$.
\begin{lemma}[Existence of a pseudo gradient]
\label{lemma-pseudo-grad}
In the above notations, there exists a vector field $Y:\tilde{M}\to T(U)$ such that
\begin{enumerate}
\item $\|Y(u)\|_X\le 2\|J'_M(u)\|_\ast,\,\,\forall\,u\in \tilde{M}$.
\item $J'_M(u)[Y(u)]\ge\|J'_M(u)\|^2_\ast,\,\forall\,u\in \tilde{M}$.
\end{enumerate}
\end{lemma}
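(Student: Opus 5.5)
Pointwise, a pseudo-gradient vector is produced by the definition of the dual norm. Then a locally Lipschitz partition of unity subordinate to an open cover of $\tilde{M}$ glues these vectors together, after they have been corrected so that they lie in the correct tangent spaces. This is the standard Palais construction, adapted to a constraint manifold $M=G^{-1}(0)$ in a Banach space. We implicitly assume $J$ and $G$ are $C^1$, so that $u\mapsto J'(u)$ and $u\mapsto G'(u)$ are continuous into $X^*$. The constraint derivative $J'_M(u)$ is the restriction of $J'(u)$ to $T_uM$.

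\emph{Step 1 (pointwise choice).} Fix $u_0\in\tilde{M}$ and set $\delta_0:=\|J'_M(u_0)\|_\ast>0$. By definition of the sup, there is $w\in T_{u_0}M$ with $\|w\|_X=1$ and $J'(u_0)[w]>\frac{2}{3}\delta_0$. Put $v_0:=\frac{3}{2}\delta_0 w$. Then $\|v_0\|_X=\frac32\delta_0<2\delta_0$ and $J'(u_0)[v_0]>\delta_0^2$, so both required properties hold at $u_0$ with strict inequalities.

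\emph{Step 2 (transport to nearby tangent spaces).} Since $G'(u_0)\neq0$, choose $z_0\in X$ with $G'(u_0)[z_0]=1$. For $u$ near $u_0$ we have $G'(u)[z_0]\neq0$, and we define
\[
v_0(u):=v_0-\frac{G'(u)[v_0]}{G'(u)[z_0]}\,z_0\in T_uM .
\]
This map is continuous in $u$ and equals $v_0$ at $u_0$.

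\emph{Step 3 (neighbourhood and gluing).} The main obstacle is that the target quantities $\|J'_M(u)\|_\ast$ are norms on the varying spaces $T_uM$, so we need continuity of $u\mapsto\|J'_M(u)\|_\ast$ on $M$. I would prove it via the formula
\[
\|J'_M(u)\|_\ast=\min_{\mu\in\R}\|J'(u)-\mu G'(u)\|_{X^*}
\]
(Hahn–Banach, since $T_uM$ is the kernel of $G'(u)$). Alternatively, use the projections $P_u v:=v-\frac{G'(u)[v]}{G'(u)[z_0]}z_0$, which are uniformly close to isomorphisms $T_{u_0}M\to T_uM$. Either route gives continuity of $u\mapsto \|J'_M(u)\|_\ast$ near $u_0$. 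By this continuity and the strict inequalities of Step 1, there is an open neighbourhood $N_{u_0}\subset\tilde{M}$ of $u_0$ on which
\[
\|v_0(u)\|_X<2\|J'_M(u)\|_\ast \quad\text{and}\quad J'(u)[v_0(u)]>\|J'_M(u)\|_\ast^2 .
\]
The cover $\{N_{u_0}\}$ of the metric space $\tilde{M}$ admits a locally finite refinement with a subordinate locally Lipschitz partition of unity $\{\varphi_i\}$ (A.H. Stone), where each refining set $N_i$ is contained in some $N_{u_i}$. We set
\[
Y(u):=\sum_i\varphi_i(u)\,v_i(u).
\]
Each term lies in $T_uM$, and since $T_uM$ is a linear space, so does the convex combination. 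Both required conditions are convex in the vector: the norm bound by the triangle inequality, and the lower bound by linearity of $J'(u)$. Hence they persist for $Y(u)$, which yields (1) and (2).
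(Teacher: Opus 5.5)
Your proof is correct, but it proves more than the paper does, by a longer route. Your Step 1 is essentially the paper's entire proof, with the constant $\frac23$ in place of $\frac34$. For each $u\in\tilde{M}$ separately, the paper picks a unit $w\in T_uM$ with $J'_M(u)[w]\ge\frac34\|J'_M(u)\|_\ast$ and sets $Y(u):=\frac32\|J'_M(u)\|_\ast w$. This gives $\|Y(u)\|_X=\frac32\|J'_M(u)\|_\ast$ and $J'_M(u)[Y(u)]\ge\frac98\|J'_M(u)\|_\ast^2$. The lemma as stated asks for no regularity of $Y$, so this pointwise (possibly discontinuous) selection already suffices, and your Steps 2--3 are not needed for the statement itself.

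What your transport-and-glue construction buys is a \emph{continuous} pseudo-gradient. That is what the paper actually relies on afterwards, when it integrates the field $W$ built from $Y$ in the proof of Theorem \ref{th-bd-PS}: a pointwise choice does not generate a flow.

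Two remarks on your version. First, the continuity of $u\mapsto\min_{\mu\in\R}\|J'(u)-\mu G'(u)\|_{X^*}$ deserves one line. Near $u_0$ any minimizing $\mu$ satisfies $|\mu|\le 2\|J'(u)\|_{X^*}/\|G'(u)\|_{X^*}$, which stays bounded because $G'(u_0)\neq 0$, and this gives the two-sided estimate. Second, your $Y$ is in general only continuous, not locally Lipschitz, even though the partition of unity is. The transported vectors $v_i(u)$ are only as regular as $u\mapsto G'(u)$, and for the constraint $\|u\|_s^s=\rho^s$ with $s<2$ this map is H\"older but not locally Lipschitz. This does not affect the lemma. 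It is, however, a point where the well-posedness of the flow used later would still need a separate argument under either construction.
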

\begin{proof}
For any $u\in M$, we take a tangent vector $w\in T_u M$ such that $$\|w\|_X=1,\qquad J'_M(u)[w]\ge\frac{3}{4}\|J'_M(u)\|_\ast$$ 
and we set $Y(u):=\frac{3}{2}\|J'_M(u)\|_\ast w\in T_u M$. It is possible to see that $Y(u)$ fulfills the required properties.  
\end{proof}
The vector field $Y$ constructed in Lemma \ref{lemma-pseudo-grad} is known as a \textit{pseudo-gradient} vector field for $J$ on $M$.
\begin{remark}
We note Lemma \ref{lemma-pseudo-grad} is a generalization of Lemma $4$ of \cite{lions1983nonlinear} where the authors prove the existence of a pseudo-gradient vector field in the particular case $S:=\{u\in H:\,\|u\|_H=1\}$.
\end{remark}
The main purpose of the Section is to find a sequence $\{u_n\}\subset M$ such that $J(u_n)$ converges to some suitable real number and $\|J'_M(u_n)\|_\ast\to 0$.\\

For our purposes, we need to introduce the notion of distance between a point $u\in M$ and a given set $C\subset X$. This can be done by setting $${\rm dist}(u,C):=\inf_{v\in C}\|u-v\|_X.$$
Now we are ready to state and prove the main result of the sequence, which gives the existence of a sequence with the properties mentioned above as a corollary.
\begin{theorem}\label{th-bd-PS}
Let $k\ge 1$ and let $K\subset\R^k$ be a compact subset. In the above notations, let $\Gamma\subset C^0(K,M)$. Assume that
\begin{equation}\label{MP-geom-J}
m:=\sup_{\gamma\in\Gamma}\max_{t\in \partial K} J(\gamma(t))<c:=\inf_{\gamma\in\Gamma}\max_{t\in K}J(\gamma(t)).  
\end{equation}
Moreover, assume that there exists $a\in(m,c)$ such that $\Gamma$ is invariant under homotopies that fix the sublevel set $J^a:=\{u\in M:\,J(u)<a\}$. Then, for any $\varepsilon>0$ and for any $\gamma\in\Gamma$ such that 
\begin{equation}\label{min-seq}
\max_{t\in K}J(\gamma(t))<c+\frac{\varepsilon}{2},   
\end{equation}  
there exists $u\in M$ such that 
\begin{enumerate}
    \item $|J(u)-c|<\varepsilon$,
    \item $\|J'_M(u)\|_\ast<\sqrt{\varepsilon}$
    \item ${\rm dist}(u,\gamma(K))<\sqrt{\varepsilon}$.
\end{enumerate}
\end{theorem}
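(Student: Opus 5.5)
We argue by contradiction, following the scheme of Theorem 4.5 of \cite{ghoussoub1993duality} but with the pseudo-gradient of Lemma \ref{lemma-pseudo-grad} replacing the Riesz gradient. Fix $\varepsilon>0$; shrinking $\varepsilon$ if necessary we may assume $c-\varepsilon>a$, since the conclusion for a smaller $\varepsilon$ implies it for a larger one (the hypothesis \eqref{min-seq} only weakens). Fix $\gamma\in\Gamma$ satisfying \eqref{min-seq}. Suppose no $u\in M$ satisfies (1)--(3). Then on the set
$$N:=\{u\in M:\ |J(u)-c|<\varepsilon,\ {\rm dist}(u,\gamma(K))<\sqrt{\varepsilon}\}$$
we have $\|J'_M(u)\|_\ast\ge\sqrt{\varepsilon}$, so $N\subset\tilde M$ and $Y$ is defined on $N$.

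First we make the pseudo-gradient locally Lipschitz. The field $Y$ from Lemma \ref{lemma-pseudo-grad} is only pointwise defined. Using the continuity of $J'$ and $G'$, each $u\in\tilde M$ has a neighbourhood on which the chosen $w$, suitably projected onto $T_vM$, still satisfies properties (1)--(2) with slightly worse constants. The projection can be $v\mapsto w-\frac{G'(v)[w]}{G'(v)[z]}z$ for a fixed $z$ with $G'(u)[z]\neq 0$, which is locally Lipschitz if $G\in C^{1,1}_{loc}$; otherwise we work in local charts. Gluing these local fields with a locally Lipschitz partition of unity on the metric space $\tilde M$ gives a locally Lipschitz tangent field $\tilde Y$ with
$$\|\tilde Y(u)\|_X\le 2\|J'_M(u)\|_\ast,\qquad J'_M(u)[\tilde Y(u)]\ge \tfrac12\|J'_M(u)\|_\ast^2 .$$
Next take a locally Lipschitz cutoff $\psi:M\to[0,1]$ with $\psi=1$ on $N':=\{|J-c|\le\varepsilon/2,\ {\rm dist}(\cdot,\gamma(K))\le\sqrt{\varepsilon}/2\}$ and $\psi=0$ outside $N$. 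We then solve the flow
$$\partial_\tau\eta=-\psi(\eta)\frac{\tilde Y(\eta)}{\|\tilde Y(\eta)\|_X}$$
on $M$. We normalize by $\|\tilde Y\|_X$ so that the speed is at most $1$. To keep the flow on $M$ rather than merely in $X$, we either use the standard retraction onto $M$ near points with $G'\neq0$ or assume the manifold structure is $C^{1,1}$. This step, namely producing a genuine flow on a codimension-one $C^1$ submanifold of a Banach space, is where I expect the main technical obstacle.

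Along the flow, while $\eta\in N'$, we have
$$\frac{d}{d\tau}J(\eta)\le-\tfrac12\|J'_M\|_\ast^2/(2\|J'_M\|_\ast)\le -\tfrac14\sqrt{\varepsilon}.$$
Moreover the distance travelled up to time $\tau$ is at most $\tau$. Run the flow for time $\tau_0=\sqrt{\varepsilon}/2$, so every point starting in $\gamma(K)$ stays within distance $\sqrt{\varepsilon}/2$ of $\gamma(K)$. Set $\tilde\gamma:=\eta(\tau_0,\gamma(\cdot))$. Since $\psi=0$ on $J^a$ (because $a<c-\varepsilon$), this homotopy fixes $J^a$. The set $\gamma(\partial K)$ lies in $J^{a}$ provided $m<a$, which holds since $a\in(m,c)$. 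Therefore $\tilde\gamma\in\Gamma$ by the invariance hypothesis.

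For $t\in K$ we check that $J(\tilde\gamma(t))<c$. The function $J$ is nonincreasing along the flow. If $J(\gamma(t))\le c-\varepsilon/2$, the bound is immediate. Otherwise $J(\eta)\in(c-\varepsilon/2,c+\varepsilon/2)$ and the trajectory stays in $N'$, so $J$ decreases at rate at least $\sqrt{\varepsilon}/4$. It drops by $\tau_0\sqrt{\varepsilon}/4=\varepsilon/8$ unless it first falls below $c-\varepsilon/2$. This gain is too small against the initial excess $\varepsilon/2$, so we instead run the flow up to time $\tau_0=\sqrt{\varepsilon}/2$ but replace the thresholds in the definitions of $N,N'$ by $\varepsilon/16$-type levels. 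Equivalently, we apply the contradiction hypothesis with $\varepsilon$ replaced by a suitable multiple. The bookkeeping then gives $\max_K J(\tilde\gamma)<c$, contradicting the definition of $c$. Choosing these constants consistently is routine once the flow exists.
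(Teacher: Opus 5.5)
Your scheme is the paper's: argue by contradiction, deform $\gamma$ by a normalized pseudo-gradient flow cut off outside $N$, run it for a time below $\sqrt{\varepsilon}$, and use the invariance of $\Gamma$ under homotopies fixing $J^a$. However, the final quantitative step, which you call routine bookkeeping, fails with your constants, and moving thresholds cannot repair it. Your field satisfies $\|\tilde Y\|_X\le 2\|J'_M\|_\ast$ and $J'_M[\tilde Y]\ge\frac12\|J'_M\|_\ast^2$, so the normalized flow (speed at most $1$) lowers $J$ at rate only $\frac14\|J'_M\|_\ast\ge\frac14\sqrt{\varepsilon}$. The negated conclusion controls $\|J'_M\|_\ast$ only within distance $\sqrt{\varepsilon}$ of $\gamma(K)$, and you only know that the distance travelled is at most the elapsed time. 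Hence the largest drop of $J$ you can certify along a trajectory starting on $\gamma(K)$ is $\frac14\sqrt{\varepsilon}\cdot\sqrt{\varepsilon}=\frac{\varepsilon}{4}$. But the hypothesis $\max_K J\circ\gamma<c+\frac{\varepsilon}{2}$ allows values arbitrarily close to $c+\frac{\varepsilon}{2}$. Restarting the flow does not help, since the distance budget is cumulative. Shrinking the $J$-window of $N,N'$ merely leaves the highest points of $\gamma$ undeformed. You also cannot replace $\varepsilon$ by a multiple of it: for $\varepsilon'<\varepsilon$ the given $\gamma$ need not satisfy $\max_K J\circ\gamma<c+\frac{\varepsilon'}{2}$, and for $\varepsilon'>\varepsilon$ you have no gradient bound. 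With the constants of the statement fixed, the argument closes only if the normalized field $W$ has efficiency $\kappa>\frac12$, i.e.\ $\|W(u)\|_X\le 1$ and $J'_M(u)[W(u)]\ge\kappa\|J'_M(u)\|_\ast$.

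The missing ingredient is therefore an almost optimal pseudo-gradient. In your localization and gluing, choose at each $u_0\in\tilde M$ a unit vector $w_0\in T_{u_0}M$ with $J'_M(u_0)[w_0]>(1-\delta)\|J'_M(u_0)\|_\ast$. By continuity of $J'$, $G'$ and of $v\mapsto\|J'_M(v)\|_\ast=\min_{\mu\in\R}\|J'(v)-\mu G'(v)\|_{X^*}$, the projection of $w_0$ onto $T_vM$ has norm at most $1+\delta$ and value at least $(1-2\delta)\|J'_M(v)\|_\ast$ for $v$ near $u_0$. Convex combinations preserve both bounds, so $\kappa=\frac{1-2\delta}{1+\delta}>\frac12$ for $\delta<\frac15$. (The paper's pointwise choice $Y=\frac32\|J'_M\|_\ast w$ with $J'_M[w]\ge\frac34\|J'_M\|_\ast$ actually yields $\kappa=\frac34$ after normalization, not the rate $\|J'_M\|_\ast$ displayed there; this is still enough for its time $s_0=\frac34\sqrt{\varepsilon}$, since $\frac{9}{16}\varepsilon>\frac{\varepsilon}{2}$.) Then take $T\in\bigl(\frac{\sqrt{\varepsilon}}{2\kappa},\sqrt{\varepsilon}\bigr)$ and a cutoff equal to $1$ on $\{c\le J\le c+\frac{\varepsilon}{2},\ {\rm dist}(\cdot,\gamma(K))\le T\}$ and vanishing off $N$. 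Every $\gamma(t)$ with $J(\gamma(t))\ge c$ then falls below $c$ before time $T$ without leaving the region where the gradient bound holds.

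Two lesser points. First, your reduction to $c-\varepsilon>a$ is argued backwards: the statement for a smaller $\varepsilon'$ covers fewer paths. The paper makes the same slip, and the reduction is avoidable by letting the cutoff also vanish on $\{J\le c-\varepsilon_0\}$ with $\varepsilon_0:=\min\{\varepsilon,c-a\}$. Second, the existence of the flow on $M$, which you leave open, is merely asserted in the paper too.
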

\begin{remark}
A related result was proved in Theorem $4,5$ of \cite{ghoussoub1993duality}. However, some remarks are due
\begin{itemize}
\item the result in \cite{ghoussoub1993duality} is stated for Hilbert a manifold only, while here we have a result for a more general Banach manifold.
\item Since we assume that $\Gamma$ is invariant under homotopies which fix a certain sublevel set, not only a certain compact set, our result is much more flwxible for the applications. Note that, for example, in \cite{bartsch2021normalized}, the authors need to start from a known solution to some limit problem, while here this technical obstacle is overcome. In other words, we can construct solutions to an equation with a potential $V$, which may be trivial, without using the existence of a solution for $V=0$. Therefore, Theorem \ref{th-bd-PS} works both for the case $V\equiv 0$ and for more a general potential.
\end{itemize}
\end{remark}
\begin{proof}[Proof of Theorem \ref{th-bd-PS}]
Assume by contradiction that the statement is not true, or equivalently there exists $\varepsilon>0$ and $\gamma\in\Gamma$ such that $\max_{t\in K}J(\gamma(t))<c+\frac{\varepsilon}{2}$ and, for any $u$ in the set
$$\Lambda_\varepsilon:=\{u\in M:\, |J(u)-c|<\varepsilon,\,{\rm dist}(u,\gamma(K))<\sqrt{\varepsilon}\},$$
we have $\|J'_M(u)\|_\ast>\sqrt{\varepsilon}$.\\

Without loss of generality, we can assume that $c-\varepsilon>a$.\\

Let $g_\varepsilon:M\to[0,1]$ be a smooth function such that 
$$g_\varepsilon(u)=
\begin{cases}
1\qquad\forall\, u\in\Lambda_{\varepsilon/2} \\
0\qquad\forall\, u\in M:\,|J(u)-c|\ge \varepsilon.
\end{cases}
$$
and let
$$W(u):=
\begin{cases}
-g(u)\frac{Y(u)}{\|Y(u)\|_\ast}\qquad\forall\,u\in \tilde{M}\\
0\qquad\forall\,u\in M\setminus\tilde{M}.
\end{cases}
$$
Then $W(u)\in T_u M$, for any $u\in M$.\\

It is known that, given $u\in M$, the associated flow $\eta$ obtained by solving the Cauchy problem
\begin{equation}\notag
\begin{aligned}
\frac{d}{ds}\eta(s,u)&=W(\eta(s,u))\\   
\eta(0,u)&=u,
\end{aligned}    
\end{equation}
is defined for any $s\in[0,\infty)$. Moreover, $J$ is decreasing along the flow, since $$\frac{d}{ds}J(\eta(s,u))=\langle J'_M(\eta(s,u),W(\eta(s,u))\rangle\le-\|J'_M(\eta(s,u))\|_\ast\le 0\qquad\forall\, u\in M,\,s\ge 0.$$
In particular, $$\frac{d}{ds}J(\eta(s,u))\le-\sqrt{\varepsilon}\qquad\forall\,u\in\Lambda_\varepsilon,\,s\ge 0.$$ 
Moreover, for any $u\in M$ and $s\ge 0$, we have
\begin{equation}\notag
\|\eta(s,u)-u\|_X=\|\eta(s,u)-\eta(0,u)\|_X  =\left\|\int_0^s\frac{d}{d\tau}\eta(\tau,u)d\tau\right\|_X=\left\|\int_0^s W(\tau,u)d\tau\right\|_X\le \int_0^s \|W(\tau,u)\|_Xd\tau\le s.
\end{equation}
This yields that, for any $u\in\gamma(K)$ and $s\ge 0$, we have ${\rm dist}(\eta(s,u),\gamma(K))\le s$, so in particular ${\rm dist}(\eta(s,u),\gamma(K))<\sqrt{\varepsilon}$ if $s<\sqrt{\varepsilon}$.\\

As a consequence, taking $s_0:=\frac{3}{4}\sqrt{\varepsilon}>0$ and $u\in\gamma(K)$ such that $|J(u)-c|<\varepsilon$, we have
\begin{equation}
\label{decrease-J}
J(\eta(s_0,u))=J(u)+\int_0^{s_0}\frac{d}{ds}J(\eta(s,u)ds\le J(u)-s_0\sqrt{\varepsilon}<c+\frac{\varepsilon}{2}-s_0\sqrt{\varepsilon}=c-\frac{\varepsilon}{4}<c.    
\end{equation}
Therefore, setting $\tilde{\gamma}(t):=\eta(s_0,\gamma(t))$, we have $\max_{t\in K} J(\gamma(t))<c-\frac{\varepsilon}{4}$.\\

On the other hand, we have $\tilde{\gamma}(t)=\gamma(t)$ if $t\in K$ is such that $J(\gamma(t))\le c-\varepsilon$, so in particular
$$\max_{t\in\partial K}J(\tilde{\gamma}(t))=\max_{t\in \partial K}J(\gamma(t))\le m.$$
Furthermore, due to the choice of $g$ and $\varepsilon$, the function 
\begin{equation}\notag
H:[0,s_0]\times M\to M,\qquad H(s,u):=\eta(s,u)  
\end{equation}
is a homotopy which leaves the sublevel set $J^a$ invariant. Hence this homotopy leaves $\Gamma$ invariant, so that $\tilde{\gamma}(\cdotp)=H(s_0,\gamma(\cdotp))\in \Gamma$, a contradiction.
\end{proof}
\section{The right inverse of a non-linear operator}\label{sec-right-inv}
Given $N\ge 2$, $p\in(1,N)$, $s\in(1,p]$, 
$\lambda>0$ and $f\in X^*$, we want to solve the problem
\begin{equation}\label{quasi-lin-pb}
\begin{aligned}
&-\Delta_p u+|u|^{p-2}u+\lambda |u|^{s-2}u=f\qquad\text{in}\,X^* ,\\
& u\in X,
\end{aligned}    
\end{equation}
where we have set $X:=W^{1,p}(\R^N)\cap L^s(\R^N)$. Problem \eqref{quasi-lin-pb} has a variational structure. In fact, the solutions are the critical points of the functional 
$$E_f(u):=\frac{1}{p}\|u\|^p+\frac{\lambda}{s}\|u\|^s_s-\langle f,u\rangle\qquad\forall\,u\in X.$$
The space $X$ is endowed with the norm $$\|u\|_X:=\|u\|+\|u\|_s,\qquad\forall\, u\in X,$$
where we have set 
$$\|u\|:=(\|\nabla u\|_p^p+{\rm sgn}(p-s)\|u\|_p^p)^{1/p}=
\begin{cases}
(\|\nabla u\|^p_p+\|u\|_p^p)^{1/p}\qquad\text{if}\,p\in(1,s)\\
\|\nabla u\|_p\qquad\text{if}\,p=s
\end{cases}.$$
\begin{proposition}\label{prop-quasi-lin-pb}
Let $N\ge 2$, $p\in(1,N)$, $s\in(1,p]$, 
$\lambda>0$ and $f\in X^*$. Then
\begin{enumerate}
\item there exists a unique solution $u:=\Psi_\lambda(f)\in X$ to Problem \eqref{quasi-lin-pb}. Moreover, $u$ satisfies $E_f(u)=\min_{v\in X} E_f(v)$.
\item There exists a constant $C=C(p,q)>0$ such that 
\begin{equation}\label{est-inv}
\|u\|^p+\lambda\|u\|^s_s\le C (\|f\|_{X^*}^{p'}+\|f\|_{X^*}^2)\qquad\forall\,f\in X^*.
\end{equation}   
\item The inverse $\Psi_\lambda:X^*\to X$ is continuous with respect to the norms $\|\cdotp\|_{X^*}$ and $\|\cdotp\|_X$.
\end{enumerate}
\end{proposition}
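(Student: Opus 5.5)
The plan is to apply the direct method of the calculus of variations to $E_f$ on the reflexive space $X=W^{1,p}(\R^N)\cap L^s(\R^N)$, which is reflexive as a closed subspace of a product of reflexive spaces. The reflexivity needs a short check when $s=p$, since then $\|\cdot\|=\|\nabla u\|_p$, but $\|u\|_X=\|\nabla u\|_p+\|u\|_p$ is still an equivalent $W^{1,p}$ norm. The functional $u\mapsto \frac1p\|u\|^p+\frac{\lambda}{s}\|u\|_s^s$ is strictly convex: the map $u\mapsto\frac{\lambda}{s}\|u\|_s^s$ is strictly convex since $t\mapsto|t|^s$ is strictly convex for $s>1$, and the remaining part is convex. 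Hence $E_f$ is strictly convex, continuous and therefore weakly lower semicontinuous. For coercivity, I would write $\langle f,u\rangle\le\|f\|_{X^*}(\|u\|+\|u\|_s)$ and use Young's inequality with exponents $(p,p')$ on the first term and $(s,s')$ on the second:
\[
E_f(u)\ge \frac{1}{2p}\|u\|^p+\frac{\lambda}{2s}\|u\|_s^s-C\|f\|_{X^*}^{p'}-C_\lambda\|f\|_{X^*}^{s'} .
\]
This gives coercivity, hence a minimizer, which is unique by strict convexity. Since $E_f\in C^1(X)$, the minimizer solves \eqref{quasi-lin-pb}. Conversely, any critical point of a convex $C^1$ functional is a minimizer, so the solution is unique.

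For the estimate \eqref{est-inv}, I would test the equation with $u$ itself. This gives
\[
\|u\|^p+\lambda\|u\|_s^s=\langle f,u\rangle\le \|f\|_{X^*}(\|u\|+\|u\|_s),
\]
and absorbing by Young as above yields a bound by $C(\|f\|^{p'}+\lambda^{-s'/s}\|f\|^{s'})$. To get exactly the exponents $p'$ and $2$ stated, I would use that $s'\ge p'$ and compare $\|f\|^{s'}$ with $\|f\|^{p'}+\|f\|^{2}$ through an interpolation of powers. This step needs care: the stated exponent $2$ holds for all $s$ only if $s'\le 2$, that is $s\ge2$. I would probably need to reinterpret $C$ as depending on $\lambda,p,s$, and the exponent $2$ as the maximal exponent in play, namely $\max\{p',s'\}$. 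I expect this bookkeeping to be the main nuisance, since the statement's constants look imprecise; the qualitative bound, with powers of $\|f\|$, is what matters later.

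For the continuity in (3), let $f_n\to f$ in $X^*$ and set $u_n=\Psi_\lambda(f_n)$. By (2), $(u_n)$ is bounded in $X$. I would then use monotonicity: subtracting the equations and testing with $u_n-u$ gives
\[
\langle A(u_n)-A(u),u_n-u\rangle=\langle f_n-f,u_n-u\rangle\to0,
\]
where $A$ collects the $p$-Laplacian, $|u|^{p-2}u$ and $\lambda|u|^{s-2}u$. Each component of $A$ is monotone, so each pairing $\int(|\nabla u_n|^{p-2}\nabla u_n-|\nabla u|^{p-2}\nabla u)\cdot(\nabla u_n-\nabla u)$, together with the analogous ones for $u$ in $L^p$ and $L^s$, tends to $0$. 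The standard inequalities for the vector field $\xi\mapsto|\xi|^{r-2}\xi$ give strong convergence. For $r\ge2$ one uses $\ge c|\xi-\eta|^r$. For $1<r<2$ one uses $\ge c|\xi-\eta|^2(|\xi|+|\eta|)^{r-2}$ combined with Hölder and the boundedness of $u_n$, which is needed for $L^s$ with $s<2$ and for $p<2$. Together these give $u_n\to u$ in $X$.
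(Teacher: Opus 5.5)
Your proposal is correct. For (1) and (2) you follow the paper's route: coercivity via Young's inequality, strict convexity, then testing the equation with $u$. You are more careful than the paper about reflexivity and weak lower semicontinuity.

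Your doubts about \eqref{est-inv} are justified. The paper's own Young step produces $\|f\|_{X^*}^{s'}$, not $\|f\|_{X^*}^{2}$. It also needs $\varepsilon$ small in terms of $\lambda$, so the constant depends on $p,s,\lambda$; there is no $q$ in this problem. In fact, for $s<\min\{2,p\}$ the stated inequality fails for every constant. Take $w\in C^\infty_c(\R^N)\setminus\{0\}$, $u=R^{-\gamma}w(\cdot/R)$ with $\frac{N}{ps}\le\gamma<\frac{N}{s}$, and $f:=-\Delta_p u+|u|^{p-2}u+\lambda|u|^{s-2}u$. Then $\|f\|_{X^*}\le 2\|u\|^{p-1}+\lambda\|u\|_s^{s-1}\le C\|u\|_s^{s-1}$. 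On the other hand, $\|u\|^p+\lambda\|u\|_s^s\ge\lambda\|u\|_s^s\to\infty$ as $R\to\infty$, and $s>\max\{(s-1)p',\,2(s-1)\}$. The exponent $2$ is harmless exactly when $s\ge 2$ or $s=p$. Your bound $C(p,s,\lambda)\bigl(\|f\|_{X^*}^{p'}+\|f\|_{X^*}^{s'}\bigr)$ is what is actually proved, and it is all that is used later.

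For (3) your route genuinely differs from the paper's. The paper:
\begin{itemize}
\item extracts a weakly convergent subsequence and identifies its limit by uniqueness;
\item obtains $\|u_k\|^p+\lambda\|u_k\|_s^s\to\|u\|^p+\lambda\|u\|_s^s$ by testing each equation with its own solution;
\item concludes with the Brezis--Lieb lemma.
\end{itemize}
You instead test the difference of the equations with $u_n-u$. You then use the strong monotonicity inequalities for $\xi\mapsto|\xi|^{r-2}\xi$, with the H\"older trick for $r<2$.

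Your argument gives convergence of the whole sequence directly and treats $p<2$ and $s<2$ explicitly. It even yields a local H\"older modulus of continuity for $\Psi_\lambda$. It also sidesteps two points the paper passes over.
\begin{itemize}
\item Identifying the weak limit as a solution needs Minty's trick or the minimality argument $E_f(v)\le\liminf_k E_{f_k}(u_k)\le E_f(w)$ for all $w\in X$, since $-\Delta_p$ is not weakly sequentially continuous for $p\ne2$.
\item Brezis--Lieb applied to $\nabla u_k$ requires a.e.\ convergence of the gradients, which weak convergence does not provide. That step is better closed by weak lower semicontinuity of each term together with the Radon--Riesz property of $L^p$ and $L^s$.
\end{itemize}
The paper's approach is softer, resting only on weak compactness and uniqueness, but once repaired it is longer. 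Yours is shorter and fully quantitative.
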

\begin{proof}
\begin{enumerate} 

\item It is possible to see that the functional $E_f$ is coercive on $X$, since, for any $\varepsilon>0$, we have
\begin{equation}\notag
\begin{aligned}
E_f(u)&\ge \frac{1}{p}\|u\|^p+\frac{\lambda}{s}\|u\|^s_s-\|u\|_X\|f\|_{X^*}  \\
&\stackrel{\text{Young's}}{\ge}\frac{1}{p}\|u\|^p+\frac{\lambda}{s}\|u\|^s_s-\frac{\varepsilon^p}{p}\|u\|^p-\frac{1}{\varepsilon^{p'}p'}\|f\|^{p'}_{X^*}-\frac{\varepsilon^s}{s}\|u\|^s_s-\frac{1}{s'\varepsilon^{s'}}\|f\|^{s'}_{X^*}\\
&\ge\frac{1-\varepsilon^p}{p}\|u\|^p+\frac{\lambda-\varepsilon^s}{s}\|u\|^s_s-\frac{1}{\varepsilon^{p'}p'}\|f\|^{p'}_{X^*}-\frac{1}{s'\varepsilon^{s'}}\|f\|^{s'}_{X^*},
\end{aligned}   
\end{equation}
Therefore, coercivity can be proved by choosing $\varepsilon>0$ small enough.\\ 

Moreover, the functional $E_f$ is continuous with respect to the norm $\|\cdotp\|_X$ and strictly convex. Hence it follows that it has a unique critical point, which coincides with the global minimizer.

\item Testing the equation with $u$ we can see that, for any $\varepsilon>0$, we hvae
\begin{equation}\notag
\|u\|^p+\lambda\|u\|_s^s=\langle f,u\rangle\le\|u\|_X\|f\|_{X^*}
\stackrel{\text{Young's}}{\le} \frac{\varepsilon^p}{p}\|u\|^p+\frac{1}{\varepsilon^{p'}p'}\|f\|_{X*}^{p'}+\frac{\varepsilon^s}{s}\|u\|^s_s+\frac{1}{s'\varepsilon^{s'}}\|f\|_{X*}^{s'}
\end{equation}
Taking $\varepsilon>0$ so small enough, estimate \eqref{est-inv} is true.

\item In order to show continuity, we take a sequence $\{f_k\}_{k}\subset X^*$ such that $f_k\to f$ in $X^*$ and we show that $u_k:=\Psi_\lambda(f_k)\to u:=\Psi_\lambda(f)$ in $X$. In order to do so, we show that any subsequence $\{f_{k_j}\}_j$ admits a subsubsequence such that $u_{k_{j_i}}\to u$.\\

Let us consider a subsequence, still denoted by $\{f_k\}_k$. We note that $f_k$ is bounded in $X^*$, since $f_k\to f$ in $X^*$. As a consequence, estimate \eqref{est-inv} yields that $u_k$ is bounded in $X$. Therefore, by the weak compactness of the ball in the reflexive Banach spaces $W^{1,p}(\R^N)$ and $L^s(\R^N)$, we can extract a subsequence, namely $\{u_{k_j}\}_j$, such that $u_{k_j} \rightharpoonup v$ weakly in $X$, for some $v\in X$.\\

The weak convergence yields that $v$ also solves problem \eqref{quasi-lin-pb}, hence $v=u$, due to uniqueness.  
Since every subsequence of $\{u_k\}$ has a further subsequence weakly converging to $u$, the entire sequence satisfies $u_k \rightharpoonup u$ weakly in $X$.\\

It remains to prove that $u_k\to u$ strongly in $X$. Testing the equation satisfied by $u_k$ with $u_k$ and the one satisfied by $u$ with $u$ we get
\begin{equation}
\begin{aligned}
\|u_k\|^p+\lambda\|u_k\|^s_s&=\langle f_k ,u_k\rangle    \\
\|u\|^p+\lambda\|u\|^s_s&=\langle f,u\rangle  
\end{aligned}    
\end{equation}
Taking the difference, we have
$$
\|u_k\|^p-\|u\|^p+\lambda(\|u_k\|^s_s-\|u\|^s_s)=\langle f_k, u_k\rangle-\langle f, u\rangle =o_k(1),$$
since $$\langle f_k,u_k\rangle-\langle f,u\rangle=\langle(f_k-f),u_k\rangle+\langle f,(u_k-u)\rangle=o_k(1),$$
due to the fact that $u_k\rightharpoonup u$ weakly in $X$ and $f_k\to f$ strongly in $X^*$.\\

Moreover, the Brezis-Lieb Lemma yields that
\begin{equation}
\begin{aligned}
\|\nabla u_k\|^p_p-\|\nabla u\|^p_p&=\|\nabla(u_k-u)\|^p_p+o_k(1) \\
\|u_k\|^t_t-\|u\|^t_t&=\|u_k-u\|^t_t+o_k(1),\qquad\forall\,t\in\{s\}\cup[p,p^*]
\end{aligned}
\end{equation}
as a consequence $$\|u_k-u\|^p+\lambda\|u_k-u\|^s_s=o_k(1).$$
This concludes the proof.
\end{enumerate}
\end{proof}
\section{The proof of Theorem \ref{main-th-hom}}\label{sec-proof}
Note that the weak solutions to equation
\begin{equation}\notag
-\Delta_p u+|u|^{p-2}u+V(x)|u|^{p-2}u+\lambda |u|^{s-2}u=|u|^{q-2}u\qquad\text{in $\R^N$}    
\end{equation}
are the critical points of the functional
$$J_{V,\lambda}(u):=J_V(u)+\frac{\lambda}{s}\|u\|^s_s$$
on $X:=W^{1,p}(\R^N)\cap L^s(\R^N)$. However, due to the radial symmetry of $V$, it is natural to expect to be able to construct a radial solution $u\in X_r:=W^{1,p}_r(\R^N)\cap L^s(\R^N)\subset X $. In order to do so, we restrict ourselves to finding a critical point of $J_{V,\lambda}|_{X_r}$. This is enough provided that we show that the critical points of $J_{V,\lambda}|_{X_r}$ are actually true critical points of $J_{V,\lambda}$. This is known to be true, for example, in case $p=s=2$, since $X=H^1(\R^N)$ is a Hilbert space, hence Theorem $2.2$ of \cite{kobayashi2004principle} holds. However, in the general case that Theorem does not necessary apply and some remarks are due.

\subsection{Symmetry}
However, due to the symmetric criticality principle, namely Proposition $2.1$ of \cite{kobayashi2004principle}, it is possible to prove that the critical points of $J_{V,\lambda}|_{X_r}$ are actually true critical points of $J_{V,\lambda}$. \\

More precisely, the setting of \cite{kobayashi2004principle} is the following. Assume that $B$ is a Banch space, $G$ is a group, $J:B\to\R$ is a $C^1$-functional and $\pi: G\to {\rm Gl}(B)$ is a representation of $G$ over $B$, that is, for any $g\in G$, $\pi(g)$ is a bounded linear operator on $B$ and
\begin{itemize}
    \item $\pi(e)u=u,\,\forall\,u\in B$
    \item $\pi(g_1 g_2)u=\pi(g_1)(\pi(g_2)u),\,\forall\,u\in B$,
\end{itemize}
where $e\in G$ is the neutral element. Assume furthermore that $J$ is $G$-invariant, that is $$J(\pi(g)u)=J(u)\qquad\forall\,u\in B,\,g\in G.$$ 
Let 
\begin{equation}\label{def:sym:1}
\begin{aligned}
\Sigma&:=\{u\in B:\,\pi(g)u=u,\,\forall\,g\in G\},\\
\Sigma_*&:=\{v^*\in B^*:\,\langle v^*,u-\pi(g)u\rangle=0,\,\forall\,g\in G\}.\\
\Sigma^\bot&:=\{v^*\in B^*:\,\langle v^*,u\rangle=0,\,\forall\,u\in\Sigma\}.
\end{aligned}
\end{equation}
In these notations, the following statement holds.
\begin{proposition}[\cite{kobayashi2004principle}]\label{prop-scp}
If $\Sigma_*\cap\Sigma^\bot=\{0\}$ and $u\in B$ satisfies $\langle J'(u),v\rangle=0$ for any $v\in \Sigma$ , then $u\in \Sigma$ and $u$ is a critical point of $J$.  
\end{proposition}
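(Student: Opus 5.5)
The plan is to reproduce the abstract argument behind the principle of symmetric criticality: first show that $J'(u)$ belongs to $\Sigma_*$, then show that it also belongs to $\Sigma^\bot$, and conclude from the transversality assumption $\Sigma_*\cap\Sigma^\bot=\{0\}$ that $J'(u)=0$. (The assertion $u\in\Sigma$ has to be read as part of the hypothesis: in the paper's application $u$ is a critical point of the restriction $J|_\Sigma$, here $\Sigma=X_r$, so I would take $u\in\Sigma$ as given, as in \cite{kobayashi2004principle}.)

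\textbf{Step 1: $J'(u)\in\Sigma_*$.} I will differentiate the invariance $J(\pi(g)w)=J(w)$ with respect to $w$, using that $\pi(g)$ is bounded and linear and $J\in C^1$. This gives
\[
\langle J'(\pi(g)w),\pi(g)v\rangle=\langle J'(w),v\rangle\qquad\forall\,w,v\in B,\ g\in G .
\]
For $w=u\in\Sigma$ we have $\pi(g)u=u$, hence $\langle J'(u),\pi(g)v\rangle=\langle J'(u),v\rangle$ for every $v\in B$ and $g\in G$. Equivalently, $\langle J'(u),v-\pi(g)v\rangle=0$, which is exactly the statement $J'(u)\in\Sigma_*$.

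\textbf{Step 2: $J'(u)\in\Sigma^\bot$.} This is immediate from the hypothesis $\langle J'(u),v\rangle=0$ for all $v\in\Sigma$.

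\textbf{Step 3: conclusion.} By Steps 1 and 2, $J'(u)\in\Sigma_*\cap\Sigma^\bot=\{0\}$, so $u$ is a free critical point of $J$.

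The only delicate point is Step 1: the identity must be used at a point $u$ that is already fixed by $G$, because $\pi(g)u=u$ is what turns the chain-rule identity into invariance of the functional $J'(u)$. In the paper's application the real work is elsewhere, namely in checking that $\Sigma_*\cap\Sigma^\bot=\{0\}$ for $G=O(N)$ acting on $X=W^{1,p}(\R^N)\cap L^s(\R^N)$. There I would argue by averaging over the compact group with Haar measure: the averaging operator $\mathcal{A}v=\int_{O(N)}\pi(g)v\,dg$ is a bounded projection of $X$ onto $\Sigma$. For $v^*\in\Sigma_*$ we have $\langle v^*,v\rangle=\langle v^*,\mathcal{A}v\rangle$, and if in addition $v^*\in\Sigma^\bot$ this vanishes because $\mathcal{A}v\in\Sigma$; hence $v^*=0$.
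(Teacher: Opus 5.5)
Your proof is correct and is exactly the argument of \cite{kobayashi2004principle}, which the paper cites without proving; the paper only checks $\Sigma_*\cap\Sigma^\bot=\{0\}$ by the same Haar-averaging computation you sketch. You are also right to treat $u\in\Sigma$ as a hypothesis, since as literally stated the proposition is false: for $J\equiv 0$ every $u\in B$ satisfies $\langle J'(u),v\rangle=0$ for all $v\in\Sigma$ without being $G$-invariant.
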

In our case, we apply Proposition \ref{prop-scp} with $B:=W^{1,p}(\R^N)\cap L^s(\R^N)$, $G:=O(N)$ and $\Sigma:=X_r=\{u\in B:\,u=u_r\}$, where
$$u_r(x):=\int_{O(N)} u\circ g(x)\,d\mu_g\qquad\forall\, u\in B,\,g\in G.$$
Note that the mapping $\phi:u\in X\mapsto u_r\in X_r$ is a projection, in the sense that $\phi^2=\phi$. In other words, for any $u\in X$ we have $(u_r)_r=u_r\in \Sigma$.\\ 

Furthermore, $\Sigma_*\cap\Sigma^\bot=\{0\}$. In fact, for $v^*\in\Sigma_*\cap\Sigma^\bot$, we have
\begin{equation}
\begin{aligned}
\langle v^*,u\rangle&=\langle v^*,u-u_r\rangle+\langle v^*,u_r\rangle=\langle v^*,u-u_r\rangle=\langle v^*,u-\int_{O(N)}(u\circ g) \,d\mu_g\rangle\\
&=\langle v^*,\int_{O(N)}(u-u\circ g) \,d\mu_g\rangle= \int_{O(N)}\langle v^*,(u-u\circ g)\rangle \,d\mu_g
\stackrel{\eqref{def:sym:1}}{=}0,\qquad\forall\, u\in B,
\end{aligned}
\end{equation}
which yields that $\Sigma_*\cap\Sigma^\bot=\{0\}$. As a consequence, applying the above argument to $J:=J_{V,\lambda}$, it is enough to look for critical points of $J_{V,\lambda}|_{X_r}$.

\subsection{Mountain pass geometry}
In this section we prove that $J_V$ has the mountain-pass geometry. The proof follows the outlines of Section $2.2$ of our first paper.\\

First let us recall the Gagliardo-Nirenberg inequality, which asserts that there exists a constant $C>0$ such that
\begin{equation}
\|u\|_q\le C\|\nabla u\|_p^\theta \|u\|_s^{1-\theta}   \qquad\forall\, u\in W^{1,p}(\R^N),\,\theta:=\frac{pN(q-s)}{q(p(N+s)-sN)}.
\end{equation}
We note that $\theta\in(0,1)$ since $s\le p<q<p^*$. \\

Let us denote, for $k>0$,
\begin{align}
\alpha_k:=\sup_{u\in D_k}J_V(u);\quad \beta_k:=\inf_{u\in \partial D_k}J_V(u),\\
D_k:= \{u\in \mathcal{S}_{\rho,r}:\  \|u\|\le k\},
\end{align}
where $\|\cdotp\|$ denotes the standard $W^{1,p}(\rn)$-norm.
\begin{lemma}
\label{lemma-MP}
Assume that either $V\in L^{\alpha}(\rn)$ with $\alpha\in[\frac{N}{p},\infty]$ and $\|V^-\|_{\alpha}<S_{p,\alpha}$. 
Then for $\frac{N+s}{N}p<q<p^*$, there exist $0<k_1<k_2$ such that
\begin{align}\label{eq:lev:1}
0<\alpha_{k_1}<\beta_{k_2}.
\end{align}
and $\sup_{u\in D_{k_1}}J_V(u)>0$.
\end{lemma}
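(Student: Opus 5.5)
The plan is to bound $J_V$ on $\mathcal{S}_{\rho,r}$ from above and below in terms of $\|u\|$ alone, using the Gagliardo--Nirenberg inequality stated above with the $L^s$-norm fixed equal to $\rho$. The potential term is controlled by H\"older and the definition of $S_{p,\alpha}$:
\[
\Bigl|\int_{\R^N}V|u|^p\,dx\Bigr|\le \|V\|_\alpha\|u\|^p_{\frac{p\alpha}{\alpha-1}}\le \|V\|_\alpha S_{p,\alpha}^{-1}\|u\|^p .
\]
For the lower bound only the negative part matters, so $\int V|u|^p\ge -\|V^-\|_\alpha S_{p,\alpha}^{-1}\|u\|^p$. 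For the upper bound we use $|V|$; this is finite since $V\in L^\alpha$. (When $s=p$ one must check which norm $S_{p,\alpha}$ refers to. If it is the full $W^{1,p}$-norm, absorb the extra $\|u\|_p^p$ by interpolation between $L^s$ and $L^{p^*}$; since $s=p$ gives $\|u\|_p=\rho$, this is harmless.)

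Since $\|\nabla u\|_p\le\|u\|$, Gagliardo--Nirenberg gives $\|u\|_q^q\le C^q\rho^{q(1-\theta)}\|u\|^{q\theta}$. A direct computation from $q>p\frac{N+s}{N}$ shows $q\theta>p$. Therefore, for $u\in\mathcal{S}_{\rho,r}$,
\[
\frac{1-\|V^-\|_\alpha S_{p,\alpha}^{-1}}{p}\|u\|^p-\frac{C^q\rho^{q(1-\theta)}}{q}\|u\|^{q\theta}\le J_V(u)\le \frac{1+\|V\|_\alpha S_{p,\alpha}^{-1}}{p}\|u\|^p .
\]
Write $A:=\frac{1-\|V^-\|_\alpha S_{p,\alpha}^{-1}}{p}>0$ and $B:=\frac{1+\|V\|_\alpha S_{p,\alpha}^{-1}}{p}$. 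Then $\alpha_k\le Bk^p$, and $\beta_k\ge Ak^p-C'\rho^{q(1-\theta)}k^{q\theta}$.

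Next, pick $k_2$ and then $k_1$. Because $q\theta>p$, for $k_2$ small enough (depending on $\rho$) we get $C'\rho^{q(1-\theta)}k_2^{q\theta-p}\le A/2$, hence $\beta_{k_2}\ge \frac{A}{2}k_2^p$. Now take $k_1<k_2$ with $Bk_1^p<\frac{A}{2}k_2^p$, i.e.\ $k_1<(A/2B)^{1/p}k_2$, which gives $\alpha_{k_1}<\beta_{k_2}$. Note that $\partial D_{k_2}$ is nonempty because $\|\cdot\|$ takes all positive values on $\mathcal{S}_{\rho,r}$: via the scaling $u^t$ the $L^s$-norm is preserved while $\|\nabla u^t\|_p\to0$ as $t\to0$ and $\to\infty$ as $t\to\infty$. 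If $s<p$ the $L^p$ part also has to tend to zero, which follows by interpolation.

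The remaining point, and the main obstacle, is positivity, $\alpha_{k_1}>0$ (which is also what $\sup_{D_{k_1}}J_V>0$ asks). We need $D_{k_1}\neq\emptyset$ and an element of it with positive energy. The infimum of $\|u\|$ over $\mathcal{S}_{\rho,r}$ is $0$ when $s=p$, by the scaling above. When $s<p$, $\|u\|_p$ is bounded below on $\mathcal{S}_{\rho,r}$ only through interpolation, and it can be made small by spreading the mass (take $t\to0$ in $u^t$), so the infimum is again $0$. Hence there is $u\in\mathcal{S}_{\rho,r}$ with $0<\|u\|<k_1$ small enough that the lower bound gives $J_V(u)\ge A\|u\|^p/2>0$, so $\alpha_{k_1}>0$. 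I expect the delicate part to be tracking the dependence of $k_1,k_2$ on $\rho$ together with the $s=p$ versus $s<p$ norm conventions. I would handle both cases through the two displayed inequalities, so that only the constants $A,B$ change.
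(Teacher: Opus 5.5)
Your argument is correct whenever $S_{p,\alpha}$ bounds $\int_{\R^N}V^-|u|^p\,dx$ by a multiple of $\|u\|^p$ itself, i.e.\ for $s<p$ and for $s=p$, $\alpha=N/p$. There it is essentially the paper's first case: you take a small $k_2$ instead of the maximiser $t_\rho$ of $f$, and your check that $D_{k_1}\neq\emptyset$ is one the paper omits.

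The gap is the case $s=p$, $\alpha\in(N/p,\infty]$, $V^-\not\equiv0$. There $\|u\|=\|\nabla u\|_p$, but $S_{p,\alpha}$ is defined with the full $W^{1,p}$-norm, or equals $1$ for $\alpha=\infty$, which only gives $\|V^-\|_\infty\|u\|_p^p$. The extra $\|u\|_p^p=\rho^p$ is not ``harmless''. It is fixed, while $\|\nabla u\|_p$ can be arbitrarily small on $\mathcal{S}_{\rho,r}$, so it cannot be absorbed; interpolating $L^p$ between $L^s=L^p$ and $L^{p^*}$ gives nothing. The correct lower bound is
\[
J_V(u)\ge g(\|\nabla u\|_p),\qquad g(t):=\frac{1-S_{p,\alpha}^{-1}\|V^-\|_\alpha}{p}\,t^p-\frac{C^q\rho^{(1-\theta)q}}{q}\,t^{\theta q}-\frac{\rho^p}{p}\,S_{p,\alpha}^{-1}\|V^-\|_\alpha ,
\]
which is negative for small $t$. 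This is a real effect, not a weakness of the estimate. Take $V=-\varepsilon(1+|x|)^{-\gamma}$ with $N/\alpha<\gamma<p$ and $\varepsilon$ small, so that $\|V^-\|_\alpha<S_{p,\alpha}$. Take $u=\rho w^t$ with $w$ radial and $\|w\|_p=1$. Then $\|\nabla u\|_p=\rho t\|\nabla w\|_p$. For $t\le1$ you get $\int_{\R^N}V|u|^p\,dx\le-\varepsilon m\rho^p t^\gamma$ with $m:=\int_{\R^N}(1+|y|)^{-\gamma}|w|^p\,dy$. Hence $J_V(u)\le\frac{\rho^p}{p}\bigl(t^p\|\nabla w\|_p^p-\varepsilon m t^\gamma\bigr)<0$ for $t$ small, i.e.\ $\beta_k<0$ for all $k<c'\rho$. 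So your bound $\beta_{k_2}\ge\frac A2 k_2^p$ for small $k_2$ fails in this case, and so does your positivity argument as $\|u\|\to0$.

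The missing idea is the paper's second case: keep the constant and use that $\rho$ is small. The function $g$ attains its maximum at $t_\rho\sim\rho^{-\frac{q(1-\theta)}{\theta q-p}}\to\infty$, and $g(t_\rho)\to\infty$ as $\rho\to0^+$. So you take $k_2=t_\rho$ \emph{large}. Then pick $k_1<k_2$ (e.g.\ $k_1=1$ for $\rho$ small) satisfying two conditions:
\begin{itemize}
\item $g(k_1)>0$, which gives elements of $D_{k_1}$ with positive energy;
\item $\frac1p\bigl(1+S_{p,\alpha}^{-1}\|V^+\|_\alpha\bigr)(k_1^p+\rho^p)<g(t_\rho)$, which gives $\alpha_{k_1}<\beta_{k_2}$.
\end{itemize}
The smallness of $\rho$ is not in the statement of the lemma, but it is available in the main theorem, and it cannot be dropped. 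In the example above, for $k\ge c'\rho$ the $L^q$-term gives $J_V(\rho w^t)\le\frac{k^p}{p}-c''\rho^{q-a}k^{a}$ with $a:=\frac{N(q-p)}{p}\in(p,q)$. This is negative as soon as $\rho^{q-p}$ is large. So for large $\rho$ one has $\beta_k<0$ for every $k>0$, and no $k_1<k_2$ with $0<\alpha_{k_1}<\beta_{k_2}$ exists.
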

\begin{proof}
\begin{itemize}
\item[i)] Let us consider the case $s\in(1,p)$ and $\alpha\in[N/p,\infty]$ or $s=p$ and $\alpha=N/p$.\\

If $V\in L^\alpha(\R^N)$ with $\alpha\in[\frac{N}{p},\infty]$, by the H\"{o}lder inequality and the Sobolev embedding $D^{1,p}(\rn)\subset L^{p^*}(\rn)$
, we have
\begin{equation}\notag
\left|\int_{\rn}V^-(x)|u|^pdx\right|\le\|V^-\|_{\alpha}\|u\|^p_{\frac{p\alpha}{\alpha-1}}\le S_{p,\alpha}^{-1}\|V^-\|_{\alpha}\|u\|^p\qquad\forall\,u\in \mathcal{S}_{\rho,r}.
\end{equation}
As a consequence, writing $V=V^+-V^-$, we have 
\begin{equation}\label{low-est-JV}
\begin{aligned}
J_V(u)&\ge \frac{1}{p}(1-S_{p,\alpha}^{-1}\|V^-\|_{\alpha})\|u\|^p
- \frac{1}{q}\|u\|_q^q\\
    &
    \ge\frac{1}{p}(1-S_{p,\alpha}^{-1}\|V^-\|_{\alpha})\|u\|^p
    - \frac{C^q}{q}\|\nabla u\|_p^{\theta q}\|u\|_s^{(1-\theta)q}\\
    &\ge \frac{1}{p}(1-S_{p,\alpha}^{-1}\|V^-\|_{\alpha})\|u\|^p-\frac{C^q}{q}\rho^{(1-\theta)q}\|u\|^{\theta q}=f(\|u\|),\qquad \forall\, u\in \mathcal{S}_{\rho},
\end{aligned}
\end{equation}
where we have set $$f(t):=\frac{1}{p}(1-S_{p,\alpha}^{-1}\|V^-\|_{\alpha})t^p-\frac{C^q}{q}\rho^{(1-\theta)q}t^{\theta q}.$$\\

We note that $f(0)=0$. Using that $1-S_{p,\alpha}^{-1}\|V^-\|_{\alpha}>0$, and $\theta q>p$, since $p\frac{N+s}{N}<q<p^*$, we can see that there exists a unique $t_\rho>0$ such that $$f(t_\rho)=\max_{t\in(0,\infty)}f(t)>0$$ 
$f$ is strictly increasing in $(0,t_\rho)$ and strictly decreasing in $(t_\rho,\infty)$.\\

As a consequence, (\ref{low-est-JV}) yields that, setting $k_2=t_\rho$, we have $\beta_{k_2}\ge f(t_\rho)>0$ and $J_V(u)>0$ for any $u\in D_{k_1}\setminus\{0\}$.\\

On the other hand, taking $k_1<\bar{k}_1:=\min\{(\frac{pf(t_\rho)}{1+\|V^+\|_\alpha S^{-1}_{p,\alpha}})^{\frac{1}{p}},k_2\}$, we have
\begin{equation}\label{low-est-J}
0\le J_V(u)\le \frac{1}{p}(1+\|V^+\|_\alpha S^{-1}_{p,\alpha})\|u\|^p\le\frac{k_1^p}{p}(1+\|V^+\|_\alpha S^{-1}_{p,\alpha})<f(t_\rho)\le\beta_{k_2}\qquad\forall\,u\in D_{k_1},    
\end{equation}
which concludes the proof.
\item Let us consider the case $\alpha\in(N/p,\infty]$ and $s=p$.\\

In this case, we have
\begin{equation}\notag
\left|\int_{\rn}V^-(x)|u|^pdx\right|\le\|V^-\|_{\alpha}\|u\|^p_{\frac{p\alpha}{\alpha-1}}\le S_{p,\alpha}^{-1}\|V^-\|_{\alpha}\|u\|^p_{W^{1,p}(\R^N)}\qquad\forall\,u\in \mathcal{S}_{\rho,r}.
\end{equation}
As a consequence, we can see that
\begin{equation}
\begin{aligned}
J_V(u)&\ge \frac{1}{p}(1-S_{p,\alpha}^{-1}\|V^-\|_\alpha)\|\nabla u\|^p_p-\frac{C^q}{q}\rho^{(1-\theta)q}t^{\theta q}-\frac{\rho^p}{p}\|V^-\|_\alpha S^{-1}_{p,\alpha}\\
&=f(\|\nabla u\|_p)-\frac{\rho^p}{p}\|V^-\|_\alpha S^{-1}_{p,\alpha}=:g(\|\nabla u\|_p).
\end{aligned}
\end{equation}
It is possible to see that, for any $\rho>0$, there exists a unique $t_\rho>0$ such that $$g(t_\rho)=\max_{t>0}g(t)\to\infty,\qquad\rho\to 0^+.$$
More precisely, it is possible to show that $t_\rho=O(\rho^{-\frac{q(1-\theta)}{\theta q-p}})\to \infty$ and $g(t_\rho)=O(\rho^{-\frac{pq(1-\theta)}{\theta q-p}})\to\infty$ as $\rho\to 0^+$.\\

As a consequence, setting $k_2=t_\rho$, we have $\beta_{k_2}\ge g(t_\rho)>0$ for $\rho>0$ small enough.\\

On the other hand, taking $1<k_1<\bar{k}_1:=\min\{(\frac{pg(t_\rho)}{1+\|V^+\|_\alpha S^{-1}_{p,\alpha}})^{\frac{1}{p}},k_2\}$, which is possible since 
$$\min\{(\frac{pg(t_\rho)}{1+\|V^+\|_\alpha S^{-1}_{p,\alpha}})^{\frac{1}{p}},k_2\}\to\infty\qquad\rho\to 0^+,$$
we have
\begin{equation}\label{low-est-J}
J_V(u)\le \frac{1}{p}(1+\|V^+\|_\alpha S^{-1}_{p,\alpha})\|u\|^p_{W^{1,p}}\le\frac{k_1^p}{p}(1+\|V^+\|_\alpha S^{-1}_{p,\alpha})<g(t_\rho)\le\beta_{k_2}\qquad\forall\,u\in D_{k_1},    
\end{equation}
which concludes the proof.
\end{itemize}
\end{proof}
Lemma \ref{lemma-MP} can be used to prove that $J_V$ has the mountain pass geometry. More precisely, we recall that 
we define the mountain-pass level
\begin{equation}
c^r_{V,\rho}:=\inf_{\gamma\in \Gamma_{\rho,r}}\max_{t\in[0,1]}J_V(\gamma(t)),
\end{equation}
where $\Gamma_{\rho,r}$ is the set of paths
$$\Gamma_{\rho,r}:=\{\gamma\in C([0,1],\mathcal{S}_{\rho,r}):\,\gamma(0)\in D_{k_1},\,J_V(\gamma(1))<0\}.$$
\begin{lemma}\label{lemma-mpg}
Assume that $V\in L^{\alpha}(\R^N)$ with $\alpha\in[\frac {N}{p},\infty]$ and $\|V^-\|_{\alpha}<S_{p,\alpha}$
. Then, in the above notations, we have
$$0<\sup_{\gamma\in\Gamma_{\rho,r}}\max\{J_V(\gamma(0)),\,J_V(\gamma(1))\}<c^r_{V,\rho}.$$
\end{lemma}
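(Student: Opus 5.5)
The proof will rest entirely on Lemma \ref{lemma-MP}, with $0<k_1<k_2$ as given there. There are three inequalities to establish:
\[
J_V(\gamma(0))\le\alpha_{k_1},\qquad J_V(\gamma(1))<0,\qquad c^r_{V,\rho}\ge\beta_{k_2},
\]
for every $\gamma\in\Gamma_{\rho,r}$. Together with \eqref{eq:lev:1} these give
\[
\sup_{\gamma\in\Gamma_{\rho,r}}\max\{J_V(\gamma(0)),J_V(\gamma(1))\}\le\max\{\alpha_{k_1},0\}=\alpha_{k_1}<\beta_{k_2}\le c^r_{V,\rho}.
\]

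The endpoint bounds come straight from the definitions. For $\gamma\in\Gamma_{\rho,r}$ we have $\gamma(0)\in D_{k_1}$, hence $J_V(\gamma(0))\le\alpha_{k_1}$, and we have $J_V(\gamma(1))<0<\alpha_{k_1}$.

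For positivity of the supremum I will exhibit a single admissible path whose starting value is positive. Fix any $w\in\mathcal{S}_{\rho,r}$ and use the $L^s$-preserving scaling $w^t(x)=t^{N/s}w(tx)$. Then
\[
\|\nabla w^t\|_p^p=t^{\,p(N+s)/s-N}\|\nabla w\|_p^p,\qquad \|w^t\|_q^q=t^{\,N(q-s)/s}\|w\|_q^q .
\]
Since $q>p\frac{N+s}{N}$, the exponent $N(q-s)/s$ is larger than $p(N+s)/s-N$. The potential term is at most $S_{p,\alpha}^{-1}\|V\|_\alpha$ times the norm, so it is controlled by the kinetic part. Hence $J_V(w^t)\to-\infty$ as $t\to\infty$. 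In the case $s=p$ the $L^p$ norm is constant, which only adds a bounded term.

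For small $t$, $\|w^t\|\to0$ when $s<p$. When $s=p$ with $\alpha>N/p$, I will use that $k_1>1$ can be taken large as $\rho\to0$, so that small $t$ again gives $w^t\in D_{k_1}$. In either case $w^t\in D_{k_1}\setminus\{0\}$, and Lemma \ref{lemma-MP} (through \eqref{low-est-JV}) gives $J_V(w^t)>0$ there. The path $\gamma(\tau)=w^{t_0+\tau(t_1-t_0)}$ is then admissible and has $J_V(\gamma(0))>0$.

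The main step is $c^r_{V,\rho}\ge\beta_{k_2}$. Let $\gamma\in\Gamma_{\rho,r}$. Its start satisfies $\|\gamma(0)\|\le k_1<k_2$. I claim its end satisfies $\|\gamma(1)\|>k_2$. Indeed, every $u\in\mathcal{S}_{\rho,r}$ with $\|u\|\le k_2$ has $J_V(u)\ge f(\|u\|)\ge0$, because $f\ge0$ on $[0,t_\rho]$ (in the second case I use $g$ and the analogous monotonicity). This would contradict $J_V(\gamma(1))<0$.

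Since $\tau\mapsto\|\gamma(\tau)\|$ is continuous, the intermediate value theorem gives $\tau_0$ with $\|\gamma(\tau_0)\|=k_2$, that is $\gamma(\tau_0)\in\partial D_{k_2}$. Therefore $\max_{[0,1]} J_V\circ\gamma\ge\beta_{k_2}$, and taking the infimum over $\gamma$ gives $c^r_{V,\rho}\ge\beta_{k_2}$.

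I expect the obstacle to be the nonnegativity of the lower bound on the whole ball of radius $k_2$, especially in the case $s=p$. There $g(0)<0$, so I need $g\ge0$ on $[t_*,k_2]$ and must treat small norms separately. The plan is to do this by taking $\rho$ small, so that the constant $\frac{\rho^p}{p}\|V^-\|_\alpha S^{-1}_{p,\alpha}$ is dominated, and by using $\|V^-\|_\alpha<S_{p,\alpha}$ with the full $W^{1,p}$ norm.
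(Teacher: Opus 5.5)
Your proof has the same skeleton as the paper's. The endpoint values are at most $\alpha_{k_1}$, one admissible path starts at positive energy, and every admissible path must cross $\partial D_{k_2}$ because $J_V\ge 0$ on $D_{k_2}$ forces $\|\gamma(1)\|>k_2$. Where \eqref{low-est-JV} is available ($s<p$, or $s=p$ with $\alpha=N/p$), this is correct. Your scaling path $\tau\mapsto w^{t_0+\tau(t_1-t_0)}$ also supplies the admissible path whose existence the paper only asserts.

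The case $s=p$, $\alpha\in(N/p,\infty]$ is a genuine gap, and taking $\rho$ small cannot close it. The fact that $g(0)<0$ reflects actual negative energy, not a lossy estimate. Take $V=-\varepsilon(1+|x|)^{-\beta}$ with $N/\alpha<\beta<p$ (reading $N/\alpha:=0$ if $\alpha=\infty$) and $\varepsilon>0$ small, so that $V\in L^\alpha(\R^N)$ and $\|V^-\|_\alpha<S_{p,\alpha}$. For any $w\in\mathcal{S}_{\rho,r}$ and $t\in(0,1]$, using $s=p$ and the change of variables $y=tx$,
\[
J_V(w^t)\le\frac{t^p}{p}\|\nabla w\|_p^p-\frac{\varepsilon t^{\beta}}{p}\int_{\R^N}(1+|y|)^{-\beta}|w(y)|^p\,dy .
\]
This is negative for small $t$ since $\beta<p$. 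Meanwhile $\|\nabla w^t\|_p=t\|\nabla w\|_p\to 0$, so $w^t\in D_{k_1}$ for small $t$ whenever $D_{k_1}\neq\emptyset$, whether $D_{k_1}$ is measured with $\|\nabla\cdot\|_p$ or with the $W^{1,p}$-norm.

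The constant path $\gamma\equiv w^t$ is then admissible, so $c^r_{V,\rho}\le J_V(w^t)<0$ for every $\rho$. Consequently your claim that $J_V(w^t)>0$ for small $t$ fails, and so does your claim that $J_V\ge 0$ on $D_{k_2}$. In fact the asserted inequality is itself false in this case. The paper's proof has the same gap: it deduces $\|\gamma(1)\|>k_2$ from Lemma \ref{lemma-MP}, whose second case only yields $\alpha_{k_1}<\beta_{k_2}$ through $g$. To get a correct statement you must exclude this case, e.g.\ by assuming $V^-\equiv 0$ when $s=p$ and $\alpha>N/p$. Then $g=f$ and your argument goes through.
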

\begin{proof}
By definition of $\Gamma_{\rho,r}$, the definition of $\sup$, Lemma \ref{lemma-MP} and \eqref{low-est-JV}, there exists $u_0\in D_{k_1}$ such that $J_V(u_0)>0$. Taking a path $g\in \Gamma_{\rho,r}$ such that $g(0)=u_0$, we have $$\sup_{\gamma\in\Gamma_{\rho,r}}\max\{J_V(\gamma(0)),\,J_V(\gamma(1))\}\ge J_V(g(0))>0,$$
since Lemma \ref{lemma-MP} holds. Now we fix $\gamma\in\Gamma_{\rho,r}$. Since $J_V(\gamma(1))<0$, Lemma \ref{lemma-MP} yields that $\|\gamma(1)\|>k_2$. Being $\|\gamma(0)\|\le k_1<k_2$, by continuity there exists $\bar{t}\in(0,1)$ such that $\|\gamma(\bar{t})\|=k_2$, hence $$\max_{t\in[0,1]} J_V(\gamma(t))\ge J_V(\gamma(\bar{t}))\ge\beta_{k_2}>\alpha_{k_1}\ge J_V(\gamma(0))=\max\{J_V(\gamma(0)),J_V(\gamma(1))\}.$$  
Since $\gamma$ is arbitrary, this yields that $$c^r_{V,\rho}\ge \beta_{k_2}>\alpha_{k_1}\ge\sup_{\gamma\in\Gamma_{\rho,r}}\max\{J_V(\gamma(0)),J_V(\gamma(1))\}.$$
\end{proof}

\subsection{A bounded Palais-Smale sequence}
For $t>0$ and $u\in L^s(\R^N)$, we introduce the scaling 
$$u^t(x):=t^{\frac{N}{s}}u(tx).$$
Note that $\|u^t\|_s=\|u\|_s$, so that, if $u\in\mathcal{S}_\rho$, we have $u^t\in\mathcal{S}_\rho$ for any $t>0$. Using this scaling, we define the auxiliary functional
$$\tilde{J}_V:(u,\tau)\in X_r\times(0,\infty)\mapsto J_V(u^\tau)\in\R.$$
Applying Theorem \ref{th-bd-PS} to $\tilde{J}_V|_{\mathcal{S}_{\rho,r}\times(0,\infty)}$ we can construct a Palais-Smale sequence $\{u_n\}\subset \mathcal{S}_{\rho,r}$ of $J_V|_{\mathcal{S}_{\rho,r}}$ at level $c_{\rho}$ which almost satisfies the Pohozaev identity, in the sense that $P(u_n)=0$ (see \eqref{pohozaev-functional} for the definition of $P$).\\

More precisely, we have the following Lemma.
\begin{lemma}\label{lemma-bdPS}
There exists a sequence $\{u_n\}\subset \mathcal{S}_{\rho,r}$ such that
\begin{equation}\label{PS-pohozaev}
u_n^-\to 0 \,\text{in}\,X,\quad J_V(u_n)\to c^r_{V,\rho},\quad \|(J_V|_{\mathcal{S}_{\rho,r}})'(u_n)\|_*\to 0,\quad P(u_n)\to 0.
\end{equation}
\end{lemma}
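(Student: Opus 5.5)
We follow the Jeanjean augmented-functional strategy. The idea is to apply Theorem \ref{th-bd-PS} to $\tilde{J}_V$ on $M:=\mathcal{S}_{\rho,r}\times\R$, with the scaling parameter written as $\tau=e^{\sigma}$ so that $\sigma\in\R$. This $M$ is the zero set of $G(u,\sigma):=\|u\|_s^s-\rho^s$ in the Banach space $X_r\times\R$. It is a $C^1$ manifold of codimension one, because $G'(u,\sigma)[(u,0)]=s\rho^s\neq0$.

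\textbf{Step 1: the min-max classes.} We take $K=[0,1]$ and
$$\tilde\Gamma:=\{\tilde\gamma=(\gamma,\sigma)\in C([0,1],M):\ (\gamma(0))^{e^{\sigma(0)}}\in D_{k_1},\ \tilde J_V(\tilde\gamma(1))<0\}.$$
The map $(\gamma,\sigma)\mapsto\gamma^{e^{\sigma}}$ sends $\tilde\Gamma$ onto $\Gamma_{\rho,r}$, and $\gamma\mapsto(\gamma,0)$ embeds $\Gamma_{\rho,r}$ into $\tilde\Gamma$. Hence the inf-max level $\tilde c$ of $\tilde J_V$ over $\tilde\Gamma$ equals $c^r_{V,\rho}$.

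By Lemma \ref{lemma-mpg}, $m:=\sup_{\tilde\Gamma}\max\{\tilde J_V(\tilde\gamma(0)),\tilde J_V(\tilde\gamma(1))\}<\tilde c$. We pick $a\in(m,\tilde c)$. Any homotopy fixing the sublevel $\tilde J_V^a$ fixes both endpoints of every $\tilde\gamma$, since both endpoint values lie below $a$. So the endpoint conditions are preserved and $\tilde\Gamma$ is invariant.

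Next we choose $\gamma_n\in\Gamma_{\rho,r}$ with $\max J_V(\gamma_n)<c^r_{V,\rho}+\varepsilon_n/2$. We may replace $\gamma_n$ by $|\gamma_n|$: this preserves $\mathcal{S}_{\rho,r}$, $J_V$ and $\|\cdot\|$, and is continuous in $X$. Theorem \ref{th-bd-PS}, applied with $\tilde\gamma_n=(|\gamma_n|,0)$, then gives points $(v_n,\sigma_n)\in M$ such that
\begin{itemize}
\item $|\tilde J_V(v_n,\sigma_n)-c^r_{V,\rho}|<\varepsilon_n$;
\item $\|\tilde J_V'|_M(v_n,\sigma_n)\|_*<\sqrt{\varepsilon_n}$;
\item $\|v_n-|\gamma_n(t_n)|\|_X+|\sigma_n|<\sqrt{\varepsilon_n}$ for some $t_n\in[0,1]$.
\end{itemize}

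\textbf{Step 2: transferring to $J_V$.} We set $u_n:=v_n^{e^{\sigma_n}}$. Then $J_V(u_n)=\tilde J_V(v_n,\sigma_n)\to c^r_{V,\rho}$.

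The $\sigma$-derivative of $\tilde J_V$ is $\partial_\sigma\tilde J_V(v,\sigma)=\frac{d}{dt}\big|_{t=1}J_V((v^{e^\sigma})^t)=P_V(v^{e^\sigma})$. The vector $(0,1)$ is tangent to $M$, so $|P_V(u_n)|\le\|\tilde J_V'|_M\|_*\to0$.

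For the constrained derivative, take $w\in T_{u_n}\mathcal{S}_{\rho,r}$ and set $\tilde w:=w^{e^{-\sigma_n}}$. Then $(\tilde w,0)$ is tangent at $(v_n,\sigma_n)$, and
$$J_V'(u_n)[w]=\tilde J_V'(v_n,\sigma_n)[(\tilde w,0)].$$
Since $|\sigma_n|\to0$, the scaling $w\mapsto w^{e^{-\sigma_n}}$ has operator norm on $X$ bounded uniformly in $n$. This gives $\|(J_V|_{\mathcal{S}_{\rho,r}})'(u_n)\|_*\to0$.

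For the sign condition, $|\gamma_n(t_n)|\ge0$ and $\|v_n-|\gamma_n(t_n)|\|_X\to0$. Because $w\mapsto w^-$ is $1$-Lipschitz in $L^s$ and in $L^p$, and continuous in $W^{1,p}$, we get $v_n^-\to0$ in $X$. The scaling is close to the identity, so $u_n^-=(v_n^-)^{e^{\sigma_n}}\to0$ as well.

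\textbf{Main obstacle.} The delicate point is the smooth-analytic setup. We must check that $\tilde J_V$ is $C^1$ on $X_r\times\R$. The $\sigma$-derivative involves $\int V|u|^{p-2}u\,\nabla u\cdot x$, which is where the assumption $\tilde W\in L^{\frac{\alpha p}{p-1}}$ is needed. We must also justify the cutoff $g_\varepsilon$ and the global flow used in Theorem \ref{th-bd-PS}, via a locally Lipschitz pseudo-gradient on the Banach manifold. Finally, the uniform control of the scaling operators and of $w\mapsto w^-$ has to be made explicit, since $w\mapsto w^-$ is only continuous, not Lipschitz, in $W^{1,p}$.
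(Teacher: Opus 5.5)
Your construction is the paper's: the augmented functional $\tilde{J}_V(u,\tau)=J_V(u^\tau)$, Theorem~\ref{th-bd-PS} applied along $(|\gamma_n|,\text{const})$, and $u_n:=v_n^{\tau_n}$. Writing $\tau=e^{\sigma}$, which makes $\partial_\sigma\tilde{J}_V(v,\sigma)=P_V(v^{e^\sigma})$ exact, and leaving $\sigma$ free at the endpoints are harmless variants of the paper's class with $\tau(0)=\tau(1)=1$. Your transfer of the constrained derivative through $w\mapsto w^{e^{-\sigma_n}}$, and of the Pohozaev condition through the tangent vector $(0,1)$, is correct and supplies details the paper omits.

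The genuine gap is the $W^{1,p}$-component of $u_n^-\to0$ in $X$. The location property only gives $\|v_n-g_n\|_X\to0$, where $g_n:=|\gamma_n(t_n)|\ge0$ changes with $n$. Continuity of $w\mapsto w^-$ at each fixed point of $W^{1,p}$ says nothing when both sequences move and $\{g_n\}$ is not precompact. You flag this as something to be made explicit, but it cannot be, because the implication is false.

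Here is a counterexample. Take a smooth radial cut-off $0\le\varphi\le1$, $\varphi\not\equiv0$, and let $\mathrm{saw}(r):=\mathrm{dist}(r,2\mathbb{Z})$, which has values in $[0,1]$ and slopes $\pm1$. Put $g_n(x):=\delta_n\varphi(x)\,\mathrm{saw}(|x|/\delta_n)\ge0$ and $v_n:=g_n-\delta_n\varphi$, with $\delta_n\to0$. Then $\|v_n-g_n\|_X=\delta_n\|\varphi\|_X\to0$. On the other hand, $v_n^-=\delta_n\varphi\,(1-\mathrm{saw}(|x|/\delta_n))$ satisfies $|\nabla v_n^-|\ge\varphi-\delta_n|\nabla\varphi|$ a.e., so $\|\nabla v_n^-\|_p$ stays bounded away from $0$.

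What your argument, and the paper's one-line assertion, actually yields is $\|u_n^-\|_s+\|u_n^-\|_p\to0$. This follows from the $1$-Lipschitz property of $w\mapsto w^-$ in $L^s$ and $L^p$ and the uniform bound on the dilations as $\sigma_n\to0$. In particular $u_n^-\to0$ a.e. along a subsequence.

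Convergence of $u_n^-$ in $W^{1,p}$ needs an extra ingredient not contained in Theorem~\ref{th-bd-PS}. One such ingredient is strong convergence $u_n\to u$, which is only available after the compactness step; then $u\ge0$, and continuity of $w\mapsto w^-$ at the fixed limit $u$ applies. Since the sign information is never used later in the paper, the natural repair is to state that part of the lemma in the weaker form $u_n^-\to0$ in $L^s\cap L^p$.
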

\begin{proof}
Consider a sequence of paths $\{\gamma_n\}\subset\Gamma_{\rho,r}$ such that
\begin{equation}
\label{min-path}
\max_{t\in[0,1]}J_V(\gamma_n(t))<c^r_{V,\rho}+\frac{1}{n}.    
\end{equation}
Using that $J_V(|u|)=J(u)$, for any $u\in \mathcal{S}_{\rho,r}$, the sequence $\{|\gamma_n|\}\subset \Gamma_{\rho,r}$ still satisfies \eqref{min-path}, hence we can assume that $\gamma_n(t)\ge 0$ a. e. in $\R^N$, for any $t\in[0,1]$.\\

Set
$$d^r_{V,\rho}:=\inf_{\tilde{\gamma}\in\tilde{\Gamma}_{\rho,r}}\max_{t\in[0,1]}\tilde{J}_V(\tilde{\gamma}(t)),$$
where
\begin{equation}\notag
\begin{aligned}
\tilde{\Gamma}_{\rho,r}:&=\{\tilde{\gamma}:=(\gamma,\tau)\in C^0([0,1],\mathcal{S}_{\rho,r}\times(0,\infty)):\,\|\gamma(0)\|\le K_\rho,\,J_V(\gamma(1))<0,\,\tau(0)=\tau(1)=1\}\\
&=\Gamma_{\rho,r}\times\{\tau:[0,1]\to(0,\infty):\,\tau\,\text{is continuous and}\,\tau(0)=\tau(1)=1\}.     
\end{aligned}
\end{equation}
We note that $d^r_{V,\rho}=c^r_{V,\rho}$.\\

Note that $\tilde{\gamma}_n(t):=(\gamma_n(t),1)\in\tilde{\Gamma}_{\rho,r}$, where $\gamma_n$ is the sequence of paths considered in \eqref{min-path}, for any $\gamma_n\in \Gamma_{\rho,r}$, then we have
\begin{equation}
\label{min-path}
\max_{t\in[0,1]}\tilde{J}_V(\tilde{\gamma}_n(t))<d^r_{V,\rho}+\frac{1}{n}=c^r_{V,\rho}+\frac{1}{n}.    
\end{equation}

Note that $\tilde{\Gamma}_{\rho,r}$ satisfies the hypothesis of Theorem \ref{th-bd-PS}. Therefore, we obtain a sequence $(v_n,\tau_n)\in\mathcal{S}_{\rho,r}\times (0,\infty)$ such that
\begin{enumerate}
    \item $|\tilde{J}_V(v_n,\tau_n)-d^r_{V,\rho}|<\frac{2}{n}$
    \item $\|\partial_u\tilde{J}_V(v_n,\tau_n)\|_*+|\partial_\tau\tilde{J}_V(v_n,\tau_n)|<\sqrt{\frac{2}{n}}$
    \item $|\tau_n-1|+\max_{t\in[0,1]}\|v_n-(\tilde{\gamma}_1(t))_n\|_X<\sqrt{\frac{2}{n}}$.
\end{enumerate}
As a consequence, the functions $u_n:=v_n^{\tau_n}$ satisfy \eqref{PS-pohozaev}.
\end{proof}
Now we will show that, under suitable assumptions about the potential, the Palais-Smale sequence $\{u_n\}$ constructed in Lemma \ref{lemma-bdPS} is bounded.
\begin{lemma}\label{lemma-bd-lambda>0}
Assume that either $(V_1)$ 
is satisfied. 
Then 
\begin{enumerate}
    \item the Palais-Smale sequence $\{u_n\}\subset \mathcal{S}_{\rho,r}$ constructed in Lemma \ref{lemma-bdPS} is bounded in $W^{1,p}(\rn)$;
    \item there exists $\rho^*>0$ such that, for any $\rho\in(0,\rho^*)$, the sequence of Lagrange multipliers $\lambda_n:=\frac{\langle J'_V(u_n),u_n\rangle}{\rho^2}$ satisfies 
    \begin{equation}\label{lambda-infty}
    0<\lambda_\rho:=\liminf_{n\to\infty}\lambda_n\le \limsup_{n\to\infty}\lambda_n<\infty,\qquad\forall\,\rho\in(0,\rho^*).
    \end{equation}
    Moreover, $\rho^s\lambda_\rho\to \infty$ as $\rho\to 0^+$.
\end{enumerate}
\end{lemma}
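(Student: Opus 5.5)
We will combine the three pieces of information carried by the sequence of Lemma \ref{lemma-bdPS}: $J_V(u_n)=c^r_{V,\rho}+o_n(1)$, $P_V(u_n)=o_n(1)$, and the almost-criticality on $\mathcal{S}_{\rho,r}$. The last one gives
\[
\langle J_V'(u_n),v\rangle+\lambda_n\int_{\R^N}|u_n|^{s-2}u_nv\,dx=o_n(1)\|v\|_X ,
\]
and testing with $v=u_n$ we get $\langle J_V'(u_n),u_n\rangle=-\lambda_n\rho^s+o_n(1)\|u_n\|_X$. We adopt the sign convention of \eqref{main-eq}, with $\lambda_n$ normalized by $\rho^s$.

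\textbf{Step 1: boundedness.} We form the combination $J_V(u_n)-\frac{s}{N(q-s)}P_V(u_n)$, which cancels the $\|u_n\|_q^q$ term. The coefficient of $\|\nabla u_n\|_p^p$ is
\[
\frac1p-\frac{p(N+s)-sN}{pN(q-s)}=\frac{Nq-p(N+s)}{pN(q-s)}>0,
\]
because $q>p\frac{N+s}{N}$. The coefficient of $\|u_n\|_p^p$ is nonnegative when $s<p$ and zero when $s=p$. The potential terms we bound with Hölder and Sobolev. The term $\int V|u_n|^p$ is at most $S_{p,\alpha}^{-1}\|V\|_\alpha\|u_n\|^p$. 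The term $\int V|u_n|^{p-2}u_n\nabla u_n\cdot x$ is at most $\|\tilde W\|\,\|u_n\|_{\frac{p\alpha}{\alpha-1}}^{p-1}\|\nabla u_n\|_p$, which is bounded by a multiple of $\|u_n\|^p$. The smallness condition \eqref{cond-norm-V} is designed exactly so that these contributions are absorbed by the positive coefficient $\frac{Nq-p(N+s)}{pN(q-s)}$; the factor $\max\{|q-p-s|,1\}$ and the second entry $Nq-p(N+s)$ of the minimum should be checked to match. This yields $c\|u_n\|^p\le c^r_{V,\rho}+o_n(1)$.

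When $s=p$ and $\alpha>N/p$, the bound in terms of $S_{p,\alpha}$ involves the full $W^{1,p}$ norm. The extra $\|u_n\|_p^p=\rho^p$ is then a bounded constant, so the argument still gives boundedness of $\|\nabla u_n\|_p$, and hence of $u_n$ in $W^{1,p}$. Boundedness in $X$ then follows since $\|u_n\|_s=\rho$.

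\textbf{Step 2: sign and size of $\lambda_n$.} Boundedness gives $\limsup|\lambda_n|<\infty$ directly from the formula for $\lambda_n$. For positivity we eliminate $\|\nabla u_n\|_p^p$ instead. We take $\frac{N}{s}\langle J'_V(u_n),u_n\rangle-P_V(u_n)$, using the identity in the Remark after Theorem \ref{cor-V>0} in its approximate version. This expresses $\lambda_n\rho^s$, up to $o_n(1)$, as a positive multiple of $\|u_n\|_q^q$, namely with factor $N(\frac1s-\frac1q)\cdot\frac{\ldots}{\ldots}$, minus terms controlled by $\|\nabla u_n\|_p^p$ and the $V$-terms. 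More cleanly, combining with Step 1, we aim for
\[
\lambda_n\rho^s\ge \kappa\,\|\nabla u_n\|_p^p-C(\|V\|_\alpha,\|\tilde W\|)\|u_n\|^p+o_n(1),\qquad \kappa=\frac{N}{q}-\frac{N-p}{p}>0 \text{ (up to the factor $s$)}.
\]
This is where the first entry $s(\frac Nq-\frac{N-p}{p})$ of the minimum in \eqref{cond-norm-V} enters: it keeps the bracket positive. We therefore need $\liminf\|\nabla u_n\|_p$ to be bounded below. This follows from $J_V(u_n)\to c^r_{V,\rho}\ge\beta_{k_2}>0$ together with $P_V(u_n)\to0$, since these force $\|u_n\|_q^q$, and hence $\|\nabla u_n\|_p^p$, to be comparable to $c^r_{V,\rho}$ from below.

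\textbf{Step 3: asymptotics as $\rho\to0^+$.} The lower bound above gives $\rho^s\lambda_\rho\gtrsim c^r_{V,\rho}\ge\beta_{k_2}$. From Lemma \ref{lemma-MP}, $\beta_{k_2}\ge f(t_\rho)$, and $f(t_\rho)\sim\rho^{-\frac{pq(1-\theta)}{\theta q-p}}\to\infty$. Hence $\rho^s\lambda_\rho\to\infty$, and in particular $\lambda_\rho>0$ once $\rho<\rho^*$.

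The main obstacle is Step 2: the constants in \eqref{cond-norm-V} must dominate the $V$ and $\tilde W$ contributions simultaneously in both linear combinations. The sign of $\lambda_n$ in the case $s<p$, where the extra $\|u_n\|_p^p$ term appears with coefficient $N(p-s)/(sp)$, has to be tracked carefully. If the direct absorption fails, I would first try using the smallness of $\rho$ to push the low-order terms into the $o(\rho^{-\cdots})$ error.
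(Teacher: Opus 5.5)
Your Step 1 is the paper's argument: the combination $J_V-\frac{s}{N(q-s)}P_V$ is, up to the factor $N(q-s)$, exactly the relation the paper uses to bound $A_n+B_n$. For $s=p$ your Steps 2--3 also go through. The genuine gap is the case $s<p$ in Step 2, which you flag but leave open, and there direct absorption cannot work.

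Write $A_n:=\|\nabla u_n\|_p^p$ and $B_n:=\|u_n\|_p^p$. The combination you name, $\frac Ns\langle J_V'(u_n),u_n\rangle-P_V(u_n)$, eliminates $\int V|u_n|^p$, not $A_n$. What you need is to eliminate $\|u_n\|_q^q$ between $\langle J_V'(u_n),u_n\rangle+\lambda_n\rho^s=o_n(1)$ and $P_V(u_n)=o_n(1)$, which gives
\[
\Big(\frac Ns-\frac Nq\Big)\lambda_n\rho^s=\Big(\frac Nq-\frac{N-p}{p}\Big)A_n-\Big(\frac Np-\frac Nq\Big)B_n+(\text{terms in }V\text{ and }\tilde W)+o_n(1).
\]
The coefficient of $B_n$ is negative, of order one, and independent of both $V$ and $s$. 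It is not the $N(p-s)/(sp)$ appearing in $P_V$, and it does not degenerate as $s\to p^-$. Hence your target inequality $\lambda_n\rho^s\ge\kappa A_n-C(\|V\|_\alpha,\|\tilde W\|)\|u_n\|^p+o_n(1)$ does not follow for $s<p$. No smallness condition of the type \eqref{cond-norm-V} can make it follow.

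The missing ingredient is a bound showing $B_n=o(A_n)$ as $\rho\to0^+$, uniformly in $n$. The paper obtains it by interpolating between $L^s$ and $L^q$ and then applying Gagliardo--Nirenberg:
\[
B_n\le\|u_n\|_s^{\beta p}\|u_n\|_q^{(1-\beta)p}\le\kappa_\beta\,\rho^{\beta p+(1-\theta)(1-\beta)p}A_n^{\theta(1-\beta)},\qquad\frac1p=\frac\beta s+\frac{1-\beta}{q},
\]
with $\theta(1-\beta)<1$. This is combined with a lower bound $A_n\ge g(\rho)\to\infty$.

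For that lower bound the paper uses $P_V(u_n)=o_n(1)$ and Gagliardo--Nirenberg directly, getting $A_n^{1-\theta q/p}\lesssim\rho^{(1-\theta)q}$ with $1-\theta q/p<0$. You instead use $c^r_{V,\rho}\ge\beta_{k_2}\ge f(t_\rho)\to\infty$ from the mountain-pass geometry. That alternative is legitimate. However, $J_V(u_n)\to c^r_{V,\rho}$ only bounds $\|u_n\|^p=A_n+B_n$ from below, so for $s<p$ you again need the interpolation estimate above to transfer the lower bound to $A_n$ alone. With that estimate inserted in both places, your argument closes and yields $\rho^s\lambda_\rho\gtrsim c^r_{V,\rho}\to\infty$.
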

\begin{proof}
Assume first that $(V_1)$ holds. Let $u_n$ be the PS-sequence  constructed in Lemma \ref{lemma-bdPS}. Set
\begin{equation}
\begin{aligned}
&A_n:=\|\nabla u_n\|_p^p,\,B_n:={\rm sgn}(p-s)\|u_n\|^p_p,\,C_n:=\int_{\rn}V(x)|u_n|^pdx,\\
&D_n:=\int_{\rn}V(x)|u_n|^{p-2}u_n\nabla u_n\cdotp x\, dx,\, E_n:=\|u_n\|_q^q.    
\end{aligned}
\end{equation}
We stress that, even if it is not explicitly indicated, $A_n,\, B_n,\, C_n,D_n$ and $E_n$ actually depend of $\rho$. \\

Then relations \eqref{PS-pohozaev} are equivalent to the system
\begin{subequations}
\begin{align}
&A_n+B_n+C_n-\frac{p}{q}E_n=p c_{\rho}+o_n(1) \label{syst-PS-seq-explicit:1}\\
&\frac{p(N+s)-sN}{p}A_n+\frac{N(p-s)}{p}B_n-\frac{N(q-s)}{q}E_n+N C_n+sD_n=o_n(1)\label{syst-PS-seq-explicit:2}\\
&A_n+B_n+C_n+\lambda_n\rho^s-E_n=o_n(1)(A_n+B_n+1)^{1/p}
\label{syst-PS-seq-explicit:3}
\end{align}
\label{syst-PS-seq-explicit}
\end{subequations} 
Subtracting \eqref{syst-PS-seq-explicit:2} from \eqref{syst-PS-seq-explicit:1} we can see that
$$\frac{sN-p(N+s-1)}{p}A_n+\frac{p-N(p-s)}{p}B_n+(1-N)C_n-sD_n+\frac{N(q-s)-p}{q}E_n=pc_{\rho}+o_n(1).$$
Multiplying~\eqref{syst-PS-seq-explicit:1} by $\frac{N(q-s)-p}{p}$ and adding to the above
\begin{equation}
\label{rel-bd}
\frac{Nq-p(N+s)}{p}A_n+\frac{N(q-p)}{p}B_n+\frac{N(q-p-s)}{p}C_n-sD_n=N(q-s)c_{\rho}+o_n(1).
\end{equation}
Note that $Nq-p(N+s)>0\Leftrightarrow q>\frac{p(N+s)}{N}$.\\
By the H\"{o}lder inequality and the Sobolev embedding $W^{1,p}(\rn)\subset L^{\frac{p\alpha}{\alpha-1}}(\rn)$ we have
\begin{equation}
\label{est-CD}
\begin{aligned}
&\left|\int_{\rn}V(x)|u|^p dx\right|\le \|V\|_{\alpha}\|u\|_{\frac{p\alpha}{\alpha-1}}^p\le S_{p,\alpha}^{-1}\|V\|_{\alpha}\|u\|^p,\\
&\left|\int_{\rn}V(x)|u|^{p-2}u\nabla u\cdotp x\right|\le\|\tilde{W}\|_{\frac{p\alpha}{p-1}}\|u\|_{\frac{p\alpha}{\alpha-1}}^{p-1}\|\nabla u\|_p
\le S_{p,\alpha}^{-\frac{p-1}{p}}\|\tilde{W}\|_{\frac{p\alpha}{p-1}}
\|u\|^{p-1}\|\nabla u\|_p\\
&\le S_{p,\alpha}^{-\frac{p-1}{p}}\|\tilde{W}\|_{\frac{p\alpha}{p-1}}\|u\|^p,  
\end{aligned}    
\end{equation}
Then the term $C_n$ and $D_n$ can be estimated as follows.
\begin{align*}
    sD_n - \frac{N(q-p-s)}{p}C_n
    \le\left( s S_{p,\alpha}^{-\frac{p-1}{p}}\|\tilde{W}\|_{c(\alpha)} 
    + \frac{N|q-p-s|}{p} S_{p,\alpha}^{-1}\|V\|_{\alpha}\right)(A_n+B_n),
\end{align*}
which gives us for an $n_0>0$ such that
\begin{align*}
    \left(\frac{Nq-p(N+s)}{p} -sS_{p,\alpha}^{-\frac{p-1}{p}}\|\tilde{W}\|_{c(\alpha)} 
    - \frac{N|q-p-s|}{p} S_{p,\alpha}^{-1}\|V\|_{\alpha} 
    \right)(A_n+B_n)\\
    \le 
      N(q-s)m_{V,\rho}+o_n(1)\quad\forall n\ge n_0,
\end{align*}
and $B_n\ge 0$.
Thanks to~\eqref{cond-norm-V}, we have
\[
\frac{Nq-p(N+s)}{p} > sS_{p,\alpha}^{-\frac{p-1}{p}}\|\tilde{W}\|_{\frac{p\alpha}{p-1}} 
    + \frac{N|q-p-s|}{p} S_{p,\alpha}^{-1}\|V\|_{\alpha}.
\]
This yields that $A_n+B_n$ is bounded.\\
Due to the Sobolev embeddings, $E_n$ is also bounded. Hence, due to \eqref{est-CD}, $C_n$ and $D_n$ are bounded as well. 
Therefore, from~\eqref{syst-PS-seq-explicit:3} we see that $\lambda_n$ is bounded too.\\

From~\eqref{syst-PS-seq-explicit:2}, we have, for a large $n_1\ge 0$,
\begin{equation}\notag
\begin{aligned}
\frac{p(N+s)-sN}{p}A_n
&\le\frac{p(N+s)-sN}{p}A_n+\frac{N(p-s)}{p}B_n\\
&=\frac{N(q-s)}{q}E_n-NC_n-sD_n+o_n(1)\quad\forall n\ge n_1.
\end{aligned}    
\end{equation}
Using the Gagliardo-Nirenberg inequality we have
$$E_n\le \kappa\|\nabla u_n\|_p^{\theta q}\|u_n\|_s^{(1-\theta)q}=\kappa A_n^{\theta\frac{q}{p}}\rho^{(1-\theta)q}.$$
Moreover, \eqref{est-CD} gives
$$|C_n|\le S_{p,\alpha}^{-1}\|V\|_{\alpha}A_n,\qquad |D_n|\le S_{p,\alpha}^{-\frac{p-1}{p}}\|\tilde{W}\|_{\frac{p\alpha}{p-1}}A_n.$$
As a consequence
$$\left(\frac{p(N+s)-sN}{p}-NS_{p,\alpha}^{-1}\|V\|_{\alpha}-sS_{p,\alpha}^{-\frac{p-1}{p}}\|\tilde{W}\|_{\frac{p\alpha}{p-1}}\right)A_n^{1-\theta\frac{q}{p}}
\le\tilde{\kappa}\rho^{(1-\theta)q}.$$
Using that $1-\theta\frac{q}{p}<0$ and
\begin{equation}
\label{extra-cond-V1}
\frac{p(N+s)-sN}{p}-N S_p^{-1}\|V\|_{\alpha}-s S_p^{-\frac{p-1}{p}}\|\tilde{W}\|_{\frac{p\alpha}{p-1}}>0,
\end{equation}
due to \eqref{cond-norm-V} and the fact that
$$s\left(\frac{N}{q}-\frac{N-p}{p}\right)<\frac{p(N+s)-sN}{p},$$
we deduce that there exists $\bar{\rho}>0$ and a continuous function $g:(0,\bar{\rho})\to\R$ such that 
\begin{equation}\label{A-n-bd-below}
\lim_{\rho\to 0^+}g(\rho)=\infty,\qquad A_n\ge g(\rho),\,\forall\,\rho\in(0,\bar{\rho}).
\end{equation}
In particular, $A_n\to\infty$ as $\rho\to 0^+$ uniformly in $n$.\\

Multiplying \eqref{syst-PS-seq-explicit:3} by $\frac{N(q-s)}{sq}$ and~\eqref{syst-PS-seq-explicit:2} by $1/s$ and subtracting, we can see that
\begin{equation}\label{rel-lambda-n}
\begin{aligned}
\lambda_n\rho^s\left(\frac{N}{s}-\frac{N}{q}\right)&=\left(\frac{N}{q}-\frac{N-p}{p}\right)A_n -\left(\frac{N}{p}-\frac{N}{q}\right)B_n +\left(N-\frac{N}{s}+\frac{N}{q}\right)C_n +D_n \\
&\ge\left(\frac{N}{q}-\frac{N-p}{p}-\left(N-\frac{N}{s}+\frac{N}{q}\right)S_{p,\alpha}^{-1}\|V\|_{\alpha} - S_{p,\alpha}^{-\frac{p-1}{p}}\|\tilde{W}\|_{\frac{\alpha p}{p-1}}\right)A_n\\
&-\left(\frac{N}{p}-\frac{N}{q}\right)B_n .
\end{aligned}
\end{equation}
In order to control the negative term involving $B_n$, for $1<s<p$, we use the interpolation inequality $$\|u_n\|^p_p\le\|u_n\|_s^{\beta p}\|u_n\|_q^{(1-\beta)p},\qquad\frac{1}{p}=\frac{\beta}{s}+\frac{1-\beta}{q},\,\beta\in(0,1),$$
and the Gagliardo-Nirenberg inequality we can see that,
\begin{equation}\notag
\begin{aligned}
&B_n\le \|u_n\|_s^{\beta p}\|u_n\|_q^{(1-\beta)p}
\le \kappa^{1-\beta} \|u_n\|_s^{\beta p}(\|u_n\|_s^{1-\theta}\|\nabla u_n\|_p^\theta)^{(1-\beta)p}
\\
&=\kappa_{\beta}\rho^{\beta p+(1-\theta)(1-\beta)p}A_n^{\theta (1-\beta)}
\le \kappa_{\beta}\rho^{\beta p+(1-\theta)(1-\beta)p}A_n,   
\end{aligned}
\end{equation}
for $\rho>0$ sufficiently small, since $(1-\beta)\theta\in(0,1)$ and $A_n\to\infty$ 
as $\rho\to 0$ uniformly in $n$. Moreover, due to \eqref{cond-norm-V}, we have 
$$\frac{N}{q}-\frac{N-p}{p}-\left(N-\frac{N}{s}+\frac{N}{q}\right)S_{p,\alpha}^{-1}\|V\|_{\alpha} - S_{p,\alpha}^{-\frac{p-1}{p}}\|\tilde{W}\|_{\frac{\alpha p}{p-1}}>0,$$
hence, using \eqref{A-n-bd-below}, due to \eqref{rel-lambda-n} we can see that there exists $\delta>0$ such that 
\begin{equation}
\rho^s\lambda_n\ge \delta g(\rho),\,\forall\,\rho\in(0,\bar{\rho}).
\end{equation}
As a consequence, recalling that $\lambda_n$ is bounded in $n$,
\begin{equation}\label{lim:lag:mult}
0<\rho^s\lambda_{\rho}:=\rho^s\liminf_n \lambda_n\le \limsup_n \rho^s\lambda_n<\infty,\qquad\forall \rho\in(0,\rho^*)    
\end{equation}
and $\rho^s\lambda_\rho\to\infty$ as $\rho\to 0^+$, which concludes the proof. 

\end{proof}

\subsection{Compactness}
Since the Palais-Smale sequence $u_n$constructed in Lemmata \ref{lemma-bdPS} and \ref{lemma-bd-lambda>0} is bounded in $W^{1,p}(\rn)$ and in $L^s(\rn)$ and the sequence of the Lagrange multipliers $\lambda_n$ fulfills \eqref{lambda-infty}, there exists $u=u_\rho\in W^{1,p}(\rn)$ such that up to a subsequence (relabelled by the same indices) $u_n\to u$ weakly in $W^{1,p}(\rn)$ and in $L^s(\rn)$ and $\lambda_n\to \lambda_\rho$. Due to the weak convergence, the pair $(u,\lambda)$ satisfies the equation
\begin{equation}
\label{eq-u}
-\Delta u+(\sgn(p-s)+V(x))|u|^{p-2}u+\lambda_\rho |u|^{s-2}u=|u|^{q-2}u\qquad\text{in}\,\rn.    
\end{equation}

If $\alpha\in(\frac{N}{p},\infty)$, the compactness of the bounded Palais-Smale sequence constructed in Lemmata \ref{lemma-bdPS} and \ref{lemma-bd-lambda>0} follows from the compactness of the embedding $W^{1,p}_r(\R^N)\subset L^t(\R^N)$ for $t\in(p,p^*)$, which holds thanks to the radial symmetry (see Lemma $1$ of \cite{dhara2025normalised}).\\

\begin{proof}[Proof of Theorem \ref{main-th-hom}, case $\alpha\in(\frac{N}{p},\infty)$]
Assume that $\{u_n\}\subset \mathcal{S}_{\rho,r}$ is the bounded Palais-Smale sequence of $J_V$ on $\mathcal{S}_{\rho,r}$ at level $c^r_{V,\rho}$ constructed in Lemmata \ref{lemma-bdPS} and \ref{lemma-bd-lambda>0}. In other words, $u_n$ satisfies
$$-\Delta_p u_n+(1+V(x))|u_n|^{p-2}u_n+\lambda_n |u_n|^{s-2}u_n=|u_n|^{q-2}u_n+o_n(1)\qquad\text{in}\,X^*,$$
where, up to a subsequence, $\lambda_n=\frac{\langle J_V'(u_n),u_n\rangle}{\rho^2}\to \lambda_\rho\in(0,\infty)$. Then there exists $u\in X$ such that, up to a subsequence, $u_k\rightharpoonup u$ weakly in $X$. As a consequence, $u_n\to u$ point-wise a.e. and hence $u\in X_r$. \\

Composing with the inverse $\Psi_{\lambda_\rho}$ constructed in Proposition \ref{prop-quasi-lin-pb}, we have $$u_n=\Psi_{\lambda_\rho}(|u_n|^{q-2}u_n-V|u_n|^{p-2}u_n+o_n(1))$$ 
Due to the compactness of the embedding $W^{1,p}_r(\R^N)\subset L^q(\R^N)$, for $q\in(p,p^*)$, we have $|u_n|^{q-2}u_n\to|u|^{q-2}u$ in $L^{q'}(\R^N)$ and, therefore, in $X^*$ too. Similarly, due to the compactness of the embedding $W^{1,p}_r(\R^N)\subset L^{\frac{p\alpha}{\alpha-1}}(\R^N)$, we have $V|u_n|^{p-2}u_n\to V|u|^{p-2}u$ in $L^{\frac{\alpha p}{\alpha -1}}(\R^N)$ and, therefore, in $X^*$ too. For this reason, using the continuity of $\Psi_{\lambda_\rho}$ we can see that $$u_n=\Psi_{\lambda_\rho}(|u_n|^{q-2}u_n-V|u_n|^{p-2}u_n+o_n(1))\to\Psi_{\lambda_\rho}(|u|^{q-2}u-V|u|^{p-2})\qquad\text{strongly in}\,X.$$
Since $u_n\rightharpoonup u$ weakly in $X$, we have $u=\Psi_{\lambda_\rho}(|u|^{q-2}u-V|u|^{p-2}u)$ and $u_k\to u$ strongly in $X$, hence we have concluded the proof of Theorem \ref{main-th-hom} in case $(V_1)$ holds with $\alpha\in (\frac{N}{p},\infty)$.
\end{proof}
We stress that the case $\alpha\in(\frac{N}{p},\infty)$ also includes the case $V\equiv 0$.\\
Now we consider the case $\alpha\in\{\frac{N}{p},\infty\}$. In this case, we need a version of the splitting Lemma for $s\in(1,p]$ (see Lemma \ref{splitting-lemma} below). This Lemma is a generalization of Lemma $2$ of \cite{dhara2025normalised}, which treats the case $s=2$.\\

Similarly to what happened in Lemma $2$ of \cite{dhara2025normalised}, in the proof of the splitting Lemma we need a lower bound for the $W^{1,p}(\R^N)$-norm for the solutions to the limit problem, that is problem \eqref{main-eq} with $V\equiv 0$. Such a lower bound is proved in \cite{dhara2025normalised} in the case $s=2$ and relies on the fact that the solutions to the limit problem for $s=2$ satisfy the Pohozaev identity \eqref{pohozaev-identity}. Here we extend this result to the case $s\in(1,p]$.\\

For $\rho>0$, let
\begin{equation}
\mathcal{P}_{V,\rho}:=\{u\in \mathcal{S}_\rho:\,P_V(u)=0\},  
\end{equation}
$P:=P_0,\,\mathcal{P}_{\rho}:=\mathcal{P}_{0,\rho}$ and $I:=J_0$.\\ 

We need the following characterization of $c_\rho=c^r_\rho$.
\begin{lemma}\label{lemma-characterization}
In the above notation,
\begin{enumerate}
\item $c_\rho=\inf_{\mathcal{P}_\rho}I(u)$.
\item There exists $\rho_0>0$ such that the function $\rho\in(0,\rho_0)\mapsto c_\rho$ is non-increasing.
\end{enumerate}
\end{lemma}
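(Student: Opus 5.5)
The plan is to use the fiber map along the $L^s$-preserving scaling $u^t(x)=t^{N/s}u(tx)$. For $u\in\mathcal S_\rho$ set
\[
\varphi_u(t):=I(u^t)=\frac{t^{\,p\frac{N+s}{s}-N}}{p}\|\nabla u\|_p^p+\frac{{\rm sgn}(p-s)\,t^{\frac{N(p-s)}{s}}}{p}\|u\|_p^p-\frac{t^{\frac{N(q-s)}{s}}}{q}\|u\|_q^q .
\]
Since $q>p\frac{N+s}{N}$, we have $\frac{N(q-s)}{s}>p\frac{N+s}{s}-N\ge\frac{N(p-s)}{s}$. Consequently $\varphi_u(t)\to0^+$ as $t\to0$ and $\varphi_u(t)\to-\infty$ as $t\to\infty$. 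Moreover $t\varphi_u'(t)=P(u^t)$, and dividing $t\varphi_u'(t)$ by the power $t^{p\frac{N+s}{s}-N}$ gives a strictly decreasing function of $t$. Hence $\varphi_u$ has a unique critical point $t_u>0$, which is a strict global maximum, and $u^{t_u}$ is the unique point of the fiber lying in $\mathcal P_\rho$. In particular $\inf_{\mathcal P_\rho}I=\inf_{u\in\mathcal S_\rho}\max_{t>0}I(u^t)$. We also note that $c_\rho=c^r_\rho$ by Remark \ref{rem-rearrangement} with $V=0$, so it suffices to work with $c_\rho$.

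\textbf{Step 1: $c_\rho\le\inf_{\mathcal P_\rho}I$.} Take $u\in\mathcal P_\rho$ and form the path $t\mapsto u^{t}$ for $t\in[\varepsilon,T]$, reparametrized to $[0,1]$. Since $\|u^t\|\to0$ as $t\to0$, for small $\varepsilon$ we get $\|u^\varepsilon\|\le K_\rho$. For large $T$ we have $I(u^T)<0$. So this path lies in $\Gamma_\rho$, and its maximum is $I(u^{t_u})=I(u)$. Taking the infimum over $u$ gives the inequality.

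\textbf{Step 2: $c_\rho\ge\inf_{\mathcal P_\rho}I$.} Take any $\gamma\in\Gamma_\rho$. At $t=0$, $\|\gamma(0)\|\le K_\rho$ with $K_\rho=k_1<k_2$ from Lemma \ref{lemma-MP}. Using the lower bound \eqref{low-est-JV} and the Gagliardo--Nirenberg form of $P$, for $k_1$ small every $u$ with $\|u\|\le k_1$ satisfies $P(u)>0$, that is $t_u>1$. At $t=1$, $I(\gamma(1))<0$ forces $P(\gamma(1))<0$. Indeed, if $t_{\gamma(1)}\ge1$, then $\varphi$ would be increasing on $(0,1]$ starting from $0^+$, which would give $I(\gamma(1))>0$. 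Since $u\mapsto P(u)$ is continuous, $P(\gamma(\bar t))=0$ for some $\bar t$. Then $\max I\circ\gamma\ge I(\gamma(\bar t))\ge\inf_{\mathcal P_\rho}I$.

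The main obstacle is the sign $P>0$ on the small ball when $s=p$. In that case $\|\cdot\|=\|\nabla\cdot\|_p$, and $P(u)\ge c_1\|\nabla u\|_p^p-c_2\rho^{(1-\theta)q}\|\nabla u\|_p^{\theta q}$ with $\theta q>p$, so it is positive for $\|\nabla u\|_p$ small. For $s<p$ the $B$-term carries a positive coefficient, so the same bound holds. I would choose $\rho_0$ and $k_1$ compatibly with Lemma \ref{lemma-MP} to make this work.

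\textbf{Step 3: monotonicity.} Let $\rho_1<\rho_2$ and $u\in\mathcal P_{\rho_1}$; by Step 2's argument we may take $u\ge0$ and radial. Set $v:=(\rho_2/\rho_1)^{1/s}\,\cdot$ rescaled mass, specifically $v=\mu^{N/s}\cdots$. It is simplest to take $v=\kappa u$ with $\kappa=\rho_2/\rho_1>1$, so $v\in\mathcal S_{\rho_2}$. Then
\[
c_{\rho_2}\le\max_t I(v^t)=\max_t\Big(\tfrac{\kappa^p}{p}(A t^{a}+Bt^{b})-\tfrac{\kappa^q}{q}Et^{e}\Big).
\]
Here $A,B,E$ are the gradient, $L^p$ and $L^q$ terms of $u$, and $a,b,e$ are the exponents of $\varphi_u$. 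I would substitute $t=\kappa^{-\sigma}\tau$ with $\sigma$ chosen so that $\kappa^{p}\kappa^{-\sigma a}=\kappa^{q}\kappa^{-\sigma e}$, that is $\sigma=\frac{q-p}{e-a}>0$. With this choice the $A$ and $E$ terms scale by a common factor $\kappa^{p-\sigma a}$.

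The remaining check is the sign of $p-\sigma a$. It is negative exactly when $q>p\frac{N+s}{N}$, and this is a calculation I expect to verify. The $B$-term scales by $\kappa^{p-\sigma b}\le\kappa^{p-\sigma a}$ because $b\le a$. Given $p-\sigma a<0$, $\max_t I(v^t)\le\kappa^{p-\sigma a}\max_\tau I(u^\tau)\le I(u)$. Taking the infimum over $u$ yields $c_{\rho_2}\le c_{\rho_1}$.
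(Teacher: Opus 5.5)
Your Part (1) is fine. It is the standard fiber-map argument the paper imports from Lemma 2.3 of \cite{WLZL}: unique maximum $t_u$ of $t\mapsto I(u^t)$, the path $t\mapsto u^t$ for $c_\rho\le\inf_{\mathcal P_\rho}I$, and the intermediate value theorem for $P\circ\gamma$ for the reverse inequality. You do not even need to shrink $k_1$. If $\|u\|\le k_1$ and $t_u\le 1$, then $\|u^{t^*}\|=k_2$ for some $t^*>1$, and $\beta_{k_2}\le I(u^{t^*})\le I(u)\le\alpha_{k_1}$, which is a contradiction.

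The gap is in Step 3 when $s<p$. Since $\kappa>1$, $\sigma>0$ and $b<a$, you have $p-\sigma b>p-\sigma a$, hence $\kappa^{p-\sigma b}>\kappa^{p-\sigma a}$. The inequality you use is reversed: the $\|u\|_p^p$-term is amplified relative to the other two. Explicitly,
\[
p-\sigma a=\frac{s\bigl(q(N-p)-pN\bigr)}{Nq-p(N+s)},\qquad p-\sigma b=\frac{s\bigl(Nq-p(N+p)\bigr)}{Nq-p(N+s)}.
\]
So $p-\sigma a<0$ because $q<p^*$; the condition $q>p\frac{N+s}{N}$ only fixes the sign of the denominator, so your stated reason is also off. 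Moreover, $p-\sigma b>0$, i.e. $\kappa^{p-\sigma b}>1$, as soon as $q>p\frac{N+p}{N}$, which is admissible (e.g. $N=3$, $p=\frac52$, $s=2$, $q=6$). You are then left with
\[
I\bigl(v^{\kappa^{-\sigma}\tau}\bigr)=\kappa^{p-\sigma a}\Bigl(\frac{A}{p}\tau^a-\frac{E}{q}\tau^e\Bigr)+\kappa^{p-\sigma b}\frac{B}{p}\tau^b .
\]
This cannot be bounded by $I(u^\tau)\le I(u)$ without quantitative control of $B$ against $A$. Your Step 3 therefore proves monotonicity only for $s=p$, where $B\equiv 0$. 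A telltale sign is that it never uses $\rho<\rho_0$.

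One repair along your lines: set $g(\kappa):=\max_t I(\kappa u^t)$, which is $C^1$ because the maximiser $t_\kappa$ is nondegenerate. With $w:=\kappa u^{t_\kappa}\in\mathcal P_{\kappa\rho_1}$ and $P(w)=0$, you get
\[
\kappa g'(\kappa)=\|w\|^p-\|w\|_q^q=\frac{s}{pN(q-s)}\Bigl(\bigl(q(N-p)-pN\bigr)\|\nabla w\|_p^p+N(q-p)\|w\|_p^p\Bigr).
\]
This is $\le 0$ once $\|w\|_p^p/\|\nabla w\|_p^p$ is uniformly small on $\mathcal P_{\rho'}$ for $\rho'<\rho_0$. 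That smallness follows from the interpolation/Gagliardo--Nirenberg bound $\|w\|_p^p\le C\rho'^{\gamma}\|\nabla w\|_p^{p\theta(1-\beta)}$, with $\theta(1-\beta)<1$. It also uses the fact that $P(w)=0$ forces $\|\nabla w\|_p\to\infty$ uniformly as $\rho'\to 0$. Both are already used in the paper's boundedness lemma for the Palais--Smale sequence, and this is exactly where a $\rho_0$ enters.

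Alternatively, use the mass-adding argument, which works for every $s\in(1,p]$. Take a compactly supported near-optimal $u\in\mathcal S_{\rho_1}$ for $\inf_{\mathcal S_{\rho_1}}\max_t I(u^t)$. Add a far-away bump $v^\delta$ with $\|v\|_s^s=\rho_2^s-\rho_1^s$ and let $\delta\to 0$. Disjoint supports make $I$ and $\|\cdot\|_s^s$ additive, and $I(v^{\delta t})\to 0$ uniformly on bounded $t$-ranges. This gives $c_{\rho_2}\le c_{\rho_1}+\varepsilon$.
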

\begin{proof}
\begin{enumerate}
\item The proof follows the outlines of the proof of Lemma $2.3$ of \cite{WLZL}.
\item The proof follows the outlines of the proof of Lemma $2.8$ of \cite{WLZL}.
\end{enumerate}
\end{proof}
Using Lemma \ref{lemma-characterization}, we can prove that the solutions constructed in Theorem \ref{main-th-hom} for $V=0$ are actually the least-energy solution to \eqref{lim-eq}.
\begin{remark}\label{lemma-least-en}
Let $u\in\mathcal{S}_\rho$ be a solution to \eqref{lim-eq}, for some $\lambda>0$. Then, by Theorem \ref{cor-V>0}, $u\in \mathcal{P}_\rho$, so in particular $$I(u)\ge\inf_{v\in\mathcal{P}_\rho} I(v)=c_{\rho}.$$  
This yields that the solution $u_\rho\in \mathcal{S}_{\rho,r}$ constructed in Theorem \ref{main-th-hom} is the least energy solution to \eqref{lim-eq} in $\mathcal{S}_\rho$.
\end{remark}

Now we can prove the required lower bound for the $W^{1,p}(\R^N)$-norm of solutions to \eqref{lim-eq}.
\begin{lemma}\label{lemma-lower-bd}
Let $u\in X$ be a weak solution to the equation \eqref{lim-eq} with $0<\|u\|_s\le \rho$. Then $\|u\|\ge (pc_\rho)^{1/p}>0$.
\end{lemma}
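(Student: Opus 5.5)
Set $\sigma:=\|u\|_s\in(0,\rho]$. The plan is to combine the Pohozaev identity for $u$ with the Nehari-type identity (obtained by testing with $u$). From these we express $I(u)$ as a positive multiple of $\|u\|^p$ plus a nonnegative term. Then we bound $I(u)$ from below by the minimax value via Lemma \ref{lemma-characterization}.

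First, by Theorem \ref{cor-V>0} applied with $V\equiv 0$ and $f(t)=|t|^{q-2}t-\lambda|t|^{s-2}t-{\rm sgn}(p-s)|t|^{p-2}t$, the solution satisfies $P(u)=0$, i.e. $u\in\mathcal{P}_\sigma$. This $f$ is odd, continuous and satisfies \eqref{growth-f-qs}. Lemma \ref{lemma-characterization}(1) then gives $I(u)\ge c_\sigma$. By Lemma \ref{lemma-characterization}(2), $c_\sigma\ge c_\rho$ since $\sigma\le\rho$, provided $\rho<\rho_0$ (we work in that range, as in Theorem \ref{main-th-hom}). Positivity $c_\rho>0$ comes from Lemma \ref{lemma-mpg}.

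Next we bound $I(u)$ from above by $\frac1p\|u\|^p$. Write $A=\|\nabla u\|_p^p$, $B={\rm sgn}(p-s)\|u\|_p^p\ge0$ and $E=\|u\|_q^q$, so that $\|u\|^p=A+B$ and $I(u)=\frac1p(A+B)-\frac1qE$. Since $E\ge0$, we immediately get $I(u)\le\frac1p(A+B)=\frac1p\|u\|^p$. Combining with the previous step gives
$$\tfrac1p\|u\|^p\ge I(u)\ge c_\sigma\ge c_\rho>0,$$
hence $\|u\|\ge(pc_\rho)^{1/p}$.

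The main obstacle is the monotonicity step. Lemma \ref{lemma-characterization}(2) only gives monotonicity on $(0,\rho_0)$, so I must ensure $\sigma,\rho$ lie there. A second subtlety is that the characterization $c_\sigma=\inf_{\mathcal{P}_\sigma}I$ is needed at the level $\sigma$, not only $\rho$. If monotonicity were unavailable, I would instead use the fiber map $t\mapsto I(u^t)$: for $u\in\mathcal{P}_\sigma$ this map has its maximum at $t=1$. Rescaling $v=(\rho/\sigma)^{\cdot}$-type dilations to reach mass $\rho$ would then compare the energy directly, following the argument of Lemma $2.8$ of \cite{WLZL}.
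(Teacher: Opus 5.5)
Your argument is correct and is essentially the paper's proof. Theorem \ref{cor-V>0} (with $V\equiv 0$) places $u$ in $\mathcal{P}_{\|u\|_s}$; the characterization and monotonicity in Lemma \ref{lemma-characterization} then give $I(u)\ge c_{\|u\|_s}\ge c_\rho$; and $\frac1p\|u\|^p\ge I(u)$ finishes. The Pohozaev--Nehari rewriting announced in your opening plan and the fallback fiber-map argument are not needed, and your remark that $\rho<\rho_0$ is required for the monotonicity step makes explicit a restriction the paper uses silently.
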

\begin{proof}
Due to Lemma \ref{lemma-least-en}, we have $I(u)\ge c_{\bar{\rho}}$, where $\bar{\rho}:=\|u\|_s\in(0,\rho]$.
Hence we have
$$I(u)=\frac{1}{p}\|u\|^p-\frac{1}{q}\|u\|^q_q\ge \inf_{v\in\mathcal{P}_{\bar{\rho}}}I(v)=c_{\bar{\rho}}\ge c_\rho,$$
which concludes the proof.
\end{proof}
Now we are ready to state the splitting Lemma. In this Lemma, the potential is required to satisfy the same conditions as in Lemma $2$ of \cite{dhara2025normalised}, that is
\begin{itemize}
  \item[(i)] $V=V^+-V^-$, with $V^\pm\in L^{q^\pm}(\rn)$ for some $q^\pm\in\left[\frac{N}{p},\infty\right]$, $V^\pm\ge 0$, $V^+ V^-=0$.\label{V3}
 \color{black}\item[(ii)] $\lim_{|x|\to\infty}V^\pm(x)=0$.\label{V5}
\end{itemize}

\begin{lemma}[Splitting Lemma]
\label{splitting-lemma}
Let $N \geq 3$, $2 \color{red}\le \color{black}p < N$, $\lambda>0$, $q \in \bigl( \frac{p(N+s)}{N}, p^* \bigr)$ and $s\in(1,p]$.
Assume that $V$ satisfies $(i)-(ii)$. Let $\{u_n\} \subset W^{1,p}(\mathbb{R}^N) \cap L^s(\mathbb{R}^N)$ be a bounded Palais--Smale sequence for
\[
J_{V,\lambda}(u) = \frac{1}{p} \|u\|^p + \frac{1}{p} \int_{\mathbb{R}^N}V(x)|u|^p \, dx + \frac{\lambda}{s} \|u\|_s^s - \frac{1}{q} \|u\|_q^q
\]
such that $u_n \rightharpoonup u$ weakly in $W^{1,p}(\mathbb{R}^N) \cap L^s(\mathbb{R}^N)$. Then either $u_n \to u$ strongly in $W^{1,p}(\mathbb{R}^N) \cap L^s(\mathbb{R}^N)$, or there exists $k \geq 1$ non-trivial solutions $w^1, \dots, w^k \in W^{1,p}(\mathbb{R}^N) \cap L^s(\mathbb{R}^N)$ to the limit equation
\[
-\Delta_p w + \lambda |w|^{s-2}w + |w|^{p-2} w = |w|^{q-2} w \quad \text{in } \mathbb{R}^N,
\]
and $k$ sequences $\{y_n^j\} \subset \mathbb{R}^N$ satisfying $|y_n^j| \to \infty$ and $|y_n^j - y_n^i| \to \infty$ for $i \neq j$ such that, up to subsequence,
\[
u_n = u + \sum_{j=1}^k w^j(\cdot - y_n^j) + o_n(1) \quad \text{strongly in } W^{1,p}(\mathbb{R}^N) \cap L^s(\mathbb{R}^N).
\]
Moreover,
\begin{equation}\label{splitting-norm}
\|u_n\|_t^t = \|u\|_t^t + \sum_{j=1}^k \|w^j\|_t^t + o_n(1),\qquad\forall\, t\in\{s\}\cup[p,p^*)
\end{equation}
and
\begin{equation}\label{splitting-energy}
J_{V,\lambda}(u_n) = J_{V,\lambda}(u) + \sum_{j=1}^k I_\lambda(w^j) + o_n(1),
\end{equation}
where $I_\lambda(w) = \frac{1}{p} \|w\|^p + \frac{\lambda}{s} \|w\|_s^s - \frac{1}{q} \|w\|_q^q$.
\end{lemma}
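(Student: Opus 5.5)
We follow the classical concentration-compactness scheme of Lemma $2$ of \cite{dhara2025normalised}, adapted from $s=2$ to $s\in(1,p]$. We argue by iterated extraction of profiles.

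\textbf{Step 1: the weak limit is a critical point.} Since $u_n\rightharpoonup u$ weakly in $X$ and $\{u_n\}$ is bounded, the gradients converge a.e. This follows from the standard Boccardo--Murat type argument for the $p$-Laplacian, testing the Palais--Smale relation with $(u_n-u)\varphi$, $\varphi\in C^\infty_c(\R^N)$, and using the monotonicity of $\xi\mapsto|\xi|^{p-2}\xi$ together with local compactness in $L^t_{loc}$. Passing to the limit then shows that $u$ is a critical point of $J_{V,\lambda}$.

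\textbf{Step 2: remove the potential and split the norms.} Set $v_n^1:=u_n-u\rightharpoonup 0$. Because $V^\pm\in L^{q^\pm}$ and $V^\pm\to0$ at infinity, we split $\R^N=B_R\cup B_R^c$. On $B_R^c$ the term $\int V|v_n^1|^p$ is small, uniformly in $n$, by $(ii)$ and boundedness. On $B_R$ it tends to $0$ by local compactness when $q^\pm>N/p$. When $q^\pm=N/p$, we first approximate $V^\pm$ in $L^{N/p}$ by bounded compactly supported functions. Hence
\[
\int V|v_n^1|^p\to0,\qquad \int V|u_n|^{p-2}u_n\varphi-\int V|u|^{p-2}u\varphi\to 0
\]
uniformly for $\varphi$ bounded in $X$. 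The Brezis--Lieb lemma, which gives \eqref{splitting-norm}-type identities for the gradient and the $L^t$ norms, together with the a.e. convergence of $\nabla u_n$ from Step 1, then shows that $v_n^1$ is a Palais--Smale sequence for $I_\lambda$ and that
\[
J_{V,\lambda}(u_n)=J_{V,\lambda}(u)+I_\lambda(v_n^1)+o_n(1).
\]
To justify the splitting of $|u_n|^{q-2}u_n$ and $|u_n|^{s-2}u_n$ in $X^*$, we use the standard Brezis--Lieb lemma for these nonlinear terms.

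\textbf{Step 3: vanishing versus non-vanishing.} If $\sup_y\int_{B_1(y)}|v_n^1|^p\to0$, then Lions' lemma gives $v_n^1\to0$ in $L^t$ for $t\in(p,p^*)$. Testing $I_\lambda'(v_n^1)$ with $v_n^1$ then gives $\|v_n^1\|^p+\lambda\|v_n^1\|_s^s\to0$, which is strong convergence in $X$. Here $\lambda>0$ is essential to control the $L^s$ part. Otherwise there exist points $y_n^1$ with $\int_{B_1(y_n^1)}|v_n^1|^p\ge\delta>0$, and necessarily $|y_n^1|\to\infty$ since $v_n^1\rightharpoonup0$. The translates $v_n^1(\cdot+y_n^1)$ converge weakly to some $w^1\neq0$. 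By translation invariance of $I_\lambda$ and Step 1 applied to $I_\lambda$, $w^1$ solves the limit equation. We then set $v_n^2:=v_n^1-w^1(\cdot-y_n^1)$ and repeat Step 2 with $V\equiv 0$.

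\textbf{Step 4: termination.} This is the main obstacle. Each iteration removes at least a fixed amount of norm. Indeed, Lemma \ref{lemma-lower-bd} gives $\|w^j\|\ge (pc_{\rho})^{1/p}$ with $\rho:=\sup_n\|u_n\|_s$, once the $L^s$-norms of the $w^j$ are controlled by \eqref{splitting-norm}. Here we use Theorem \ref{cor-V>0}, which makes the Pohozaev identity available for arbitrary weak solutions and hence legitimates Lemma \ref{lemma-lower-bd} for $s\ne2$. Since $\|u_n\|^p\ge\|u\|^p+\sum_j\|w^j\|^p+o_n(1)$ and $\{u_n\}$ is bounded, the procedure stops after finitely many steps $k$. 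We obtain $|y_n^i-y_n^j|\to\infty$ from the standard fact that otherwise the corresponding translates would not converge weakly to zero. Summing the energy and norm identities gives \eqref{splitting-norm} and \eqref{splitting-energy}.

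The delicate points are the a.e. convergence of gradients, which is needed to split the nonhomogeneous term $|\nabla u|^{p-2}\nabla u$ in the dual for $p>2$, and the use of the uniform lower bound. For the latter we must check that $c_\rho$ is monotone in $\rho$ (Lemma \ref{lemma-characterization}), so that $c_{\|w^j\|_s}\ge c_\rho$.
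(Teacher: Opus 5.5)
Your proposal is correct and follows essentially the same route as the paper: a.e.\ convergence of the gradients by testing with a cut-off times $u_n-u$, vanishing of $\int V|u_n-u|^p$ together with Brezis--Lieb splitting, the Lions vanishing/non-vanishing dichotomy, and termination of the profile extraction via the uniform lower bound of Lemma \ref{lemma-lower-bd}, which rests on the Pohozaev identity of Theorem \ref{cor-V>0} and the monotonicity of $c_\rho$. Your treatment of the potential term (splitting into $B_R$ and $B_R^c$, using local compactness, and approximating in $L^{N/p}$ at the endpoint) is only a cosmetic variant of the paper's splitting into $\{V<M\}$ and $\{V>M\}$.
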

Note that Lemma \ref{splitting-lemma} holds for any $s\in(1,p]$, not necessarily for $s=2$, which was the case for Lemma $2$ of \cite{dhara2025normalised}. Moreover, relation \eqref{splitting-norm} is more general than the corresponding relation in the statement Lemma $2$ of \cite{dhara2025normalised}, where only the case $t=s=2$ is treated. However, the technique in the proof is similar.
\begin{proof}
The proof follows step by step the proof of Lemma $2$ of \cite{dhara2025normalised}, with the only differnce that we apply the Brezis-Lieb Lemma to the $L^t(\R^N)$-norm for any $t\in\{s\}\cup[p,p^*)$. As in that proof, we need to use the fact that $\|w\|\ge(p c_\rho)^{1/p}$ for any weak solution $w\in X$ to \eqref{lim-eq} with $0<\|w\|_s\le \rho$, which is guaranteed, thanks to Lemma \ref{lemma-lower-bd}.
\newline
Let $\{u_n\} \subset W^{1,p}(\mathbb{R}^N) \cap L^s(\mathbb{R}^N)$ be a bounded Palais--Smale sequence for $J_{V,\lambda}$ such that $u_n \rightharpoonup u$ weakly in $W^{1,p}(\mathbb{R}^N) \cap L^s(\mathbb{R}^N)$.\\
{\bf Step 1. Weak Convergence:}
Define $\psi_n^1 := u_n - u$. Then, we have $\psi_n^1 \rightharpoonup 0$ in $W^{1,p}(\mathbb{R}^N) \cap L^s(\mathbb{R}^N)$. Therefore, up to a subsequence (still denoted by $\psi^1_n$), $\psi_n^1 \to 0$ in $L^t_{\mathrm{loc}}(\mathbb{R}^N)$ for $t \in [p, p^*)\cup\{s\}$ and point-wise a.e. in $\R^N$.
In fact, we can see by taking radial cutoff function $\varphi \in C^\infty_c(\mathbb{R}^N)$ such that $\varphi=1$ in $B_1$ and $\varphi\equiv 0$ in $\rn\setminus B_2$ (denoting $B_R(0)=B_R$), and $\varphi_R(x) = \varphi(x/R)$, we have
\[
\langle J'_{V,\lambda}(u_n),\varphi_R (u_n - u)\rangle = o_n(1),
\]
since $u_n$ is a Palais-Smale sequence of $J_{V,\lambda}$. 
Expanding the $\langle J'_{V,\lambda}(u_n),\varphi_R (u_n - u)\rangle$, we can see that
\begin{align*}
&\int_{B_{2R}} |\nabla u_n|^{p-2} \nabla u_n \cdot \nabla(u_n - u) \varphi_R \,dx+ \int_{B_{2R} \setminus B_R} |\nabla u_n|^{p-2} (u_n - u) \nabla u_n \cdot \nabla \varphi_R \,dx\\
&+ \int_{B_{2R}} |u_n|^{p-2} u_n (u_n - u) \Bigl({\rm sgn}(p-s) + V(x)\Bigr)\varphi_R\,dx+\lambda\int_{B_{2R}} |u_n|^{s-2} u_n (u_n - u) \varphi_R\,dx \\
&- \int_{B_{2R}} |u_n|^{q-2} u_n (u_n - u) \varphi_R\,dx = o_n(1).
\end{align*}
The gradient and nonlinearity terms are controlled as in the proof of Lemma $2$, p-7 of \cite{dhara2025normalised} using the facts from $(V_1)$ or $(V_2)$ and $\psi_n^1\to 0$ strongly in $L^{\bar{p}}_{\rm loc}(\rn),\ \bar{p}\in[p,p^*)$.\\
Thus, the same argument gives:
\[
\int_{B_{2R}(0)} |\nabla(u_n - u)|^p \varphi_R \, dx = o_n(1) \quad \Rightarrow \quad \nabla u_n \to \nabla u \text{ in } L^p_{\mathrm{loc}} \text{ and a.e.},
\]
so in particular, $\nabla u_n \to \nabla u$ pointwise a.e. in $\rn$.\\
{\bf Step 2. Brezis--Lieb and Vanishing of the potential term:}
As a consequence, by Brezis--Lieb lemma, we obtain the following splitting as $n \to \infty$:
\begin{align}
\|\nabla u_n\|_p^p &= \|\nabla u\|_p^p + \|\nabla \psi_n^1\|_p^p + o_n(1), \\
\|u_n\|_t^t &= \|u\|_t^t + \|\psi_n^1\|_t^t + o_n(1). \label{eq:s-split}
\end{align}
for any $t\in\{s\}\cup[p,p^*)$.\\
Since $\psi_n^1 \rightharpoonup 0$ weakly in $W^{1,p}(\rn)\cap L^s(\rn)$ and the embedding $W^{1,p}(\Omega)\subset L^t(\Omega)$ is compact, for any bounded domain $\Omega\subset\R^N$ and $t\in[p,p^*)$, $\psi_n^1 \to 0$ strongly in $L^t_{loc}(\rn)$ for such $t$ and pointwise a.e..
Therefore, in view of the assumptions $(V_1)$ 
(specifically that $V$ vanishes at infinity),
\begin{equation}\label{int-V-0}
\int_{\mathbb{R}^N} V(x)|\psi_n^1|^p \, dx \to 0 \quad \text{as } n \to \infty.    
\end{equation}
This fact can be proved as follows. First we note that, since $\lim_{|x|\to\infty} V(x)=0$, for any $\varepsilon>0$ there exists $R>0$ such that $|V(x)|\le \varepsilon$ if $|x|\ge R$. As a consequence, there exists $C>0$ independent of $\varepsilon$ and $n_0=n_0(\varepsilon)>0$ such that, for any $n\ge n_0$ and for any $M>1$, we have
\begin{equation}\notag
\begin{aligned}
\int_{\{V<M\}} V^+|\psi^1_n|^p dx&=\int_{\{V<M\}\cap B_R(0)} V^+|\psi^1_n|^p dx+\int_{\R^N\setminus B_R(0)} V^+|\psi^1_n|^p dx\\
&\le M\int_{B_R(0)}|\psi^1_n|^p dx+\varepsilon\int_{\R^N}|\psi^1_n|^p dx\le (M+C)\varepsilon.
\end{aligned}  
\end{equation}
Moreover, by the absolute continuity of the integral with respect to the measure of the integration set, there exists $\tilde{C}>0$ independent of $\varepsilon$ and $M_0=M_0(\varepsilon)>0$ such that, for any $M\ge M_0$ and for any $n$,
\begin{equation}\notag
\begin{aligned}
\int_{\{V>M\}} V^+|\psi^1_n|^p dx&\le \left(\int_{\{V>M\}} (V^+)^{q^+}dx\right)^{1/q^+}\left(\int_{\{V>M\}}|\psi^1_n|^{\frac{pq^+}{q^+-1}}\right)^{\frac{q^+-1}{pq^+}}\\
&\le \varepsilon\|\psi^1_n\|^{\frac{pq^+}{q^+-1}}_{\frac{pq^+}{q^+-1}}\le c\varepsilon \|\psi^1_n\|^{\frac{pq^+}{q^+-1}}\le \tilde{C}\varepsilon.   
\end{aligned}
\end{equation}
A similar argument applies to the term involving $V^-$. This concludes the proof of \eqref{int-V-0}.
Consequently, the mixed term involving the potential separates:$$\int_{\mathbb{R}^N} V(x)|u_n|^p \, dx = \int_{\mathbb{R}^N} V(x)|u|^p \, dx + \int_{\mathbb{R}^N} V(x)|\psi_n^1|^p \, dx + o_n(1) = \int_{\mathbb{R}^N} V(x)|u|^p \, dx + o_n(1).$$
{\bf Step 3. Decomposition of the Functional and Its Derivative:}
Using the norm splitting and the vanishing potential term, we can decompose the energy functional
$$J_{V,\lambda}(u_n) = J_{V,\lambda}(u) + I_\lambda(\psi_n^1) + o_n(1),$$
where $I_\lambda$ is the autonomous limit functional defined by
$$I_\lambda(w) = \frac{1}{p} \|w\|^p + \frac{\lambda}{s} \|w\|_s^s - \frac{1}{q} \|w\|_q^q.$$
Furthermore, since $u_n$ is a Palais--Smale sequence ($J'_{V,\lambda}(u_n) \to 0$) and $u$ is a critical point of $J_{V,\lambda}$ (by weak limit properties), standard arguments in critical point theory (see ~\cite[p:8-9]{dhara2025normalised}) imply that 
\begin{equation}\label{I'to0}
\langle I'_\lambda(\psi_n^1),\psi^1_n\rangle \to 0 \quad \text{as}\,n\to\infty.
\end{equation}
{\bf Step 4. Analysis of the Remainder:} Note that since $\psi_n^1$ is a Palais--Smale sequence for the autonomous functional $I_\lambda$.
Now if we assume that
$$\lim_{n\to\infty} \sup_{y\in\R^N}\int_{B_r(y)}|\psi^1_n|^pdx=0\qquad\forall r>0,$$
by Lemma I.$1$ of \cite{lions1984concentration2}, we have $\psi^1_n\to 0$ in $L^q(\R^N)$. 
Therefore, by \eqref{I'to0},
$$o_n(1)=\langle I'_\lambda(\psi^1_n),\psi^1_n\rangle=\|\psi^1_n\|^p+\lambda\|\psi^1_n\|^s_s+o_n(1),$$
which yields that $\psi^1_n\to 0$ in strongly $X$, hence the proof is over.\\

Otherwise there exists a subsequence, still denoted $\psi^1_n$, a sequence of points $\{y_n^1\} \subset \mathbb{R}^N$ and $r > 0$ such that
$$\liminf_{n \to \infty} \int_{B_r(y_n^1)} |\psi_n^1|^p \, dx > 0.$$
Thus, $|y_n^1| \to \infty$ because $\psi_n^1 \to 0$ strongly in $L^p_{loc}(\R^N)$. Define the shifted sequence $w_n^1(x) := \psi_n^1(x + y_n^1)$. Then $w_n^1 \rightharpoonup w^1$ weakly as $n\to\infty$, where $w^1 \not\equiv 0$ is a solution to the limit equation~\eqref{lim-eq}. Since $I_\lambda$ is translation invariant, $w^1$ is a critical point of $I_\lambda$, satisfying the limit equation
$$-\Delta_p w^1 + \lambda |w^1|^{s-2}w^1 +{\rm sgn}(p-s) |w^1|^{p-2} w^1 = |w^1|^{q-2} w^1.$$
{\bf Step 5. Iteration:} Define for $j\ge 2$, $$\psi_n^j(x) = \psi_n^j(x) - w^{j-1}(x - y_n^{j-1}) = u_n(x) - u(x) - w^{j-1}(x - y_n^{j-1}).$$
It is possible to find a sequence $y_n^j$ such that $|y_n^j|\to\infty$, $|y_n^j - y_n^i|\to\infty$ for $1\le i<j$ and $\psi_n^j(x)\rightharpoonup w^{j}(x-y_n^j(x))$ in $W^{1,p}(\rn)\cap L^s(\rn)$
where $w^j$ is a solution to~\eqref{lim-eq} and $y_n^j$ are translations. The iteration must terminate after a finite number of steps $k$. This is because each non-trivial solution $w^j$ carries a strictly positive amount of energy (bounded away from zero, see Lemma~\ref{lemma-lower-bd}), and the total energy $J_{V,\lambda}(u_n)$ is bounded.
Summing the decompositions yields
$$u_n = u + \sum_{j=1}^k w^j(\cdot - y_n^j) + o_n(1) \quad \text{strongly in } W^{1,p}(\rn) \cap L^s(\rn).$$
The norm and energy identities follow directly from the iterated Brezis-Lieb splittings
$$\|u_n\|_t^t = \|u\|_t^t + \sum_{j=1}^k \|w^j\|_t^t + o_n(1) \quad \forall t \in \{s\} \cup [p, p^*),$$
$$J_{V,\lambda}(u_n) = J_{V,\lambda}(u) + \sum_{j=1}^k I_\lambda(w^j) + o_n(1).$$
\end{proof}

Now we are ready to conclude the proof of Theorem \ref{main-th-hom} in the case $\alpha\in\{\frac{N}{p},\infty\}$.\\

\begin{proof}[Proof of Theorem \ref{main-th-hom}, case $\alpha\in\{\frac{N}{p},\infty\}$]
Let $\{u_n\}\subset X_r$ is the bounded Palais-Smale sequence constructed in Lemmata \ref{lemma-bdPS} and \ref{lemma-bd-lambda>0}. By the compactness of the embedding $W^{1,p}_{rad}(\R^N)\subset L^t(\R^N)$ for $t\in(p,p^*)$, proved in Lemma $1$ of \cite{dhara2025normalised}, we can see that, up to a subsequence, $u_n\to u$ strongly in $L^t(\R^N)$ for $t\in(p,p^*)$.\\

If we assume by contradiction that $u_n$ does not converge strongly to $u$ in $X$, the splitting Lemma \ref{splitting-lemma} yields that there exist $k\ge 1$ non-trivial weak solutions $w^1,\dots, w^k\in X\setminus\{0\}$ to
$$-\Delta_p w+{\rm sgn}(p-s)|w|^{p-2}w+\lambda |w|^{s-2}w=|w|^{q-2}w\qquad\text{in}\,\R^N$$
such that
$$u_n=u+\sum_{j=1}^k w^j(\cdotp-y^j_n)+o_n(1)$$
in $X$. Finally, by \eqref{splitting-norm}, we can see that 
$$\|u_n\|^t_t=\|u\|^t_t+\sum_{j=1}^k \|w^j\|^t_t+o_n(1),\qquad\forall\, t\in(p,p^*).$$
On the other hand, using that $u_n\to u$ in $L^t(\R^N)$, we have
$$\|u_n\|^t_t=\|u\|^t_t+o_n(1),$$
hence $w^j=0$ for any $j=1,\dots,k$, a contradiction.
\end{proof}

\bibliographystyle{plain}
\bibliography{ref}

\end{document}